\numberwithin{equation}{section}
\newtheorem{theorem}{Theorem}[section]
\newtheorem{lemma}[theorem]{Lemma}
\newtheorem{thm}[theorem]{Theorem}
\newtheorem{prop}[theorem]{Proposition}
\newtheorem{cor}[theorem]{Corollary}
\theoremstyle{remark}
\newtheorem{rem}[theorem]{Remark}
\begin{document}
\title{Existence of blowup solutions to Boussinesq equations on $\mathbb{R}^3$ with dissipative temperature}
\author{Chen Gao\footnote{Beijing International Center for Mathematical Research, Peking University, Beijing 100871, China. E-mail: gaochen@amss.ac.cn.},~~Liqun Zhang\footnote{Hua Loo-Keng Key Laboratory of Mathematics, Institute of Mathematics, AMSS, and School of Mathematical Sciences, UCAS, Beijing 100190, P. R. China. E-mail: lqzhang@math.ac.cn.},~~Xianliang Zhang\footnote{Yanqi Lake Beijing Institute of Mathematical Sciences and Applications, Beijing 101408, China and  Yau Mathematical Sciences Center, Tsinghua University, Beijing 100084, China, E-mail: zhangxianliang16@mails.ucas.ac.cn.}}
\maketitle

\begin{abstract}
The three-dimensional incompressible Boussinesq system is one of the important equations in fluid dynamics. The system describes the motion of temperature-dependent incompressible flows. And	the temperature naturally has diffusion. Recently, Elgindi, Ghoul and Masmoudi constructed a $C^{1,\alpha}$ finite time blow-up solutions for Euler systems with finite energy. Inspired by their works, we constructed $C^{1,\alpha}$ finite time blow-up solution for Boussinesq equations where the temperature has diffusion and finite energy. Generally speaking, the diffusion of temperature smooths the solution of the system which is against the formations of singularity.  The main difficulty is that the Laplace operator of the temperature equation is not coercive under the Sobolev weighted norm introduced by Elgindi. We introduced a new time depending scaling formulation and new weighted Sobolev norms, under which we obtain the nonlinear estimate. The new norm is well-coupled with the original norm, which enables us to finish the proof.
\end{abstract}

\tableofcontents
\section{Introduction}
We are concerned with the finite time blowup solutions of the three-dimensional incompressible Boussinesq systems. These systems are widely used to model the dynamics of the ocean or the atmosphere, we refer to \cite{majda2} for a rigorous justification. In this paper we shall assume that the fluid is inviscid flow with heat-conducting diffusion, hence the system reads
\begin{equation}
	\begin{cases}\label{eq}
		&\partial_{t}u+u\cdot\nabla u +\nabla p=\theta e_3,\quad (t,x)\in \mathbb{R}_+\times \mathbb{R}^3,\\
		&\partial_{t}\theta+u\cdot\nabla\theta=\Delta \theta,\\
		&\textnormal{div}\,  u=0,\\
		&u|_{ t=0}=u_{0},   \quad \theta|_{ t=0}=\theta_{0},
	\end{cases}
\end{equation}
where the velocity $u$ is a vector field with free divergence, the scalar function $\theta$ denotes the density or the temperature and $p$ is the pressure of the fluid.

Note that when the initial density $\theta_0$ is identically zero, the above system reduces to the incompressible Euler equation:
\begin{equation*}
	\partial_{t}u+u\cdot\nabla u =-\nabla p,\quad\quad \textnormal{div}\, u=0.
\end{equation*}

For incompressible three-dimensional Euler equations, whether the solution of the equation with smooth initial data of finite energy can develop a finite time
singularity has been one of the most outstanding open questions in nonlinear partial
differential equations. Relative progress could be found in the book\cite{2001Vorticity} and excellent survey papers \cite{Constantin,titi,gibbon,Kiselev,hou,hou09}. A well-known criterion for the existence
of global smooth solutions is the Beale-Kato-Majda criterion \cite{BKM}. It states that the control of the vorticity of the fluid $\omega = \nabla\times u$ in $L^1_{loc}(\mathbb{R}_+,L^{\infty})$, is sufficient to get global well-posedness. In space dimension two, it is not difficult to obtain the global well-posedness following from the Beale-Kato-Majda criterion.
The three-dimensional case is much more difficult since vorticity $\omega$ is governed by:
\begin{equation}\label{omegaeq}
	\omega_t+(u\cdot\nabla)\omega=(\omega\cdot\nabla) u.
\end{equation}

The term $(\omega\cdot\nabla) u$ on the right-hand side is referred to the vertex stretching term, which is absent in the 2D case. Note that the vortex stretching term formally has the same scaling as that of $\omega^2$. If such nonlinear alignment persists in time, the 3D Euler equations may develop a finite-time singularity, which is also the main difficulty to the global well-posedness.

The possible breakdown of solution of incompressible three-dimensional Euler equations has been studied widely in numerical simulation. There is convincing numerical evidence that the 3D axis-symmetric Euler equations may develop a potential finite time singularity (see \cite{2014Toward,Thomas2014,Thomas2107}). Where the initial data is smooth with finite energy, satisfying certain symmetric properties. Such evidence has generated great interest, see the survey article
\cite{Kiselev2}.

For mathematical justification, in \cite{Elgindi1904} Elgindi made a breakthrough by constructing self-similar blowup solutions to the 3D axis-symmetric solutions of Euler equations with $C^{1,\alpha}$ velocity and without swirl. Under these conditions, the vorticity is only non-vanishing in one direction, and the vorticity equation reduces to one equation:
\begin{equation*}
	\partial_t\omega  +u_r\partial_r\omega+ u_3\partial_{3} \omega = \frac{\omega u_r}{r},
\end{equation*}
the right-hand side is referred to the stretching term under axis-symmetric and non-swirl conditions.

Elgindi established an approximation (\ref{decomposition}) of Biot-Savart law in a variant spherical coordinate:
	\begin{equation}\label{decomposition}
		\Phi_F=\frac{1}{4\alpha}\sin(2\beta) L_{12}(F)(z)+\bar\Phi_F.
	\end{equation}

This approximation separated the solution into singular and regular part, and regular part can be controlled by (\ref{thm_biot}) under weighted Sobolev norm $\mathcal{H}^k$(see the definition in (\ref{HsNorm})).
	\begin{equation}\label{thm_biot}
		|\partial_{\beta\beta}\bar\Phi_F|_{\mathcal{H}^k}+\alpha|D_R\partial_\beta\bar\Phi_F|_{\mathcal{H}^k}+\alpha^2|D_R^2\bar\Phi_F|_{\mathcal{H}^k}\leq C|F|_{\mathcal{H}^k}.
	\end{equation}

Elgindi observes that the advection terms are relatively small compared with the nonlinear vortex stretching term when one works with $C^\alpha$ solution with small $\alpha$ for vorticity. So he can drop the advection term, and solves the solution explicitly, with following self-similar type:
$$F_*=\frac{\Gamma(\beta)}{c}\frac{2\alpha z}{(1+z)^2},$$where $\Gamma(\beta)=(\sin\beta\cos^{2}\beta)^{\alpha/3}$. Note that $F_*$ has a variables-separable form, for angular variable, the function that author choose, $\Gamma(\beta)$ is singular in~$\beta=0, \pi/2$, which benefits to obtain that the advection term is relatively small in vorticity equation.

And he performed linearization around the fundamental model and verified the coercivity of linearized operator in weighted Sobolev space $\mathcal{H}^k$. At last with elliptic estimates and energy estimates, Elgindi established the formation of singular solutions to the Euler equation,
$$\omega(x)=\frac{1}{1-t}F\left(\frac{|x|^\alpha}{(1-t)^{1+\delta}},\beta\right),$$ where~$F=F_*+g,$ where $g$ is remainder term, satisfied $|g|_{\mathcal{H}^4}\leq C\alpha^2$.

Then in \cite{Masmoudi,elgindi1906} Elgindi, Ghoul, Masmoudi overcame the lack of finite energy, they showed that the stability of the self-similar solution constructed in \cite{Elgindi1904} allows a compact supported initial vorticity to be selected. In fact, the perturbations are allowed to have a non-trivial swirl.

In \cite{Chen2021} Jiajie Chen and Thomas Y. Hou studied 2D Boussinesq equations without dissipative temperature. With a similar approximation of Biot-Savart law for 2D case, they prove finite time blowup of the Boussinesq equations with some $C^{1,\alpha}$ initial data for the velocity and the temperature. And for Boussinesq equations with dissipative temperature, Chae proved the global well-posedness for 2D Boussinesq equations in \cite{2006chae}. For the 3D case, in \cite{2012Global} Hmidi and Rousset proved the global well-posedness for Boussinesq equations with dissipative temperature and axi-symmetric initial data without swirl and some strong regularity condition.

\subsection{Main result.}
In this article, inspired by Elgindi's work \cite{Masmoudi}, we prove the existence of finite time blowup solutions to Boussinesq equations (\ref{eq}).
\begin{thm}\label{mainthm}
	There exist $\alpha_0 >0$ and the initial data $u_0\in C^{1,\alpha}(\mathbb{R}^3)$ with compact supported initial vorticity $\omega_0\in C_0^{\alpha}(\mathbb{R}^3)$, $0\not\equiv\theta_0\in C_0^{\infty}(\mathbb{R}^3)$ with $0<\alpha<\alpha_0$, such that the unique solution of the 3D Boussinesq equations $(\ref{eq})$ with the given initial data develops an asymptotically self-similar singularity in finite time, and satisfies
	$$
	\lim\limits_{t \rightarrow 1}|\omega(t)|_{L^{\infty}}=+\infty,
	$$
	$$(u,\theta)\in \left(L^2\bigcap C^{1,\alpha}_{t,x}([0,1)\times \mathbb{R}^3)\right)\times C^{2,\alpha ;3,\alpha}([0,1)\times\mathbb{R}^3)).
	$$
	
\end{thm}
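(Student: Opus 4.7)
The plan is to follow the strategy of Elgindi--Ghoul--Masmoudi \cite{Masmoudi,elgindi1906} for the 3D Euler case and to adapt it to the dissipative temperature. I would first restrict to axisymmetric, no-swirl data, so that the vorticity has a single nontrivial component $\omega^\phi$ satisfying
\[
\partial_t \omega + u_r\partial_r\omega + u_3\partial_3\omega = \frac{u_r}{r}\omega - \partial_r\theta,
\]
while $\theta$ evolves by $\partial_t\theta + u\cdot\nabla\theta = \Delta\theta$. Pass to Elgindi's variant spherical coordinates with $R=r^\alpha$, and introduce self-similar variables $\tau=-\log(1-t)$, $z=R/(1-t)^{1+\delta}$, writing $\omega=(1-t)^{-1}F(\tau,z,\beta)$. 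For $\theta$ I would use a \emph{different} time-dependent scaling $\theta=(1-t)^{a}\Theta(\tau,z,\beta)$, the exponent $a$ being tuned so that the buoyancy forcing $\partial_r\theta$ enters the $F$-equation at the correct self-similar order while the self-similar image of $\Delta$ acquires a prefactor $(1-t)^{\mu(\alpha,\delta)}$ with $\mu>0$; since $\alpha\ll 1$ and $\delta$ can be made small, the dissipation should degenerate to a controlled perturbation rather than destroy the self-similar structure.

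\emph{Approximate profile and linearization.} Keep Elgindi's explicit vorticity profile $F_*=\frac{\Gamma(\beta)}{c}\frac{2\alpha z}{(1+z)^2}$ and look for a small partner profile $\Theta_*$, of size $O(\alpha)$ in the norm below, such that $(F_*,\Theta_*)$ is stationary in $\tau$ modulo higher-order corrections. Writing $(F,\Theta)=(F_*+g,\Theta_*+\eta)$, the linearized system schematically reads
\[
\partial_\tau g + \mathcal{L}_\alpha g = \mathcal{B}\,\eta + \mathcal{N}_1(g,\eta),\qquad \partial_\tau \eta + \mathcal{T}_\alpha\,\eta = \mu\,\mathcal{D}\,\eta + \mathcal{N}_2(g,\eta),
\]
where $\mathcal{L}_\alpha$ is Elgindi's linear operator (coercive in the weighted space $\mathcal{H}^k$ by \eqref{decomposition}--\eqref{thm_biot}), $\mathcal{B}$ is the self-similar buoyancy coupling, $\mathcal{T}_\alpha$ is a transport operator, and $\mathcal{D}$ is the self-similar Laplacian with small prefactor $\mu$.

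\emph{Main obstacle: coupled coercivity.} The key difficulty, announced in the abstract, is that $\mathcal{D}$ is \emph{not} coercive in Elgindi's weighted Sobolev space $\mathcal{H}^k$: the $z$- and $\beta$-weights were chosen to make the Biot--Savart estimate sharp and are incompatible with the integration-by-parts identities needed to extract a good sign from $\mathcal{D}$. To overcome this, introduce a second weighted Sobolev norm $|\cdot|_{\widetilde{\mathcal{H}}^k}$ for $\eta$, obtained by adjusting the weights near $z=0$, $z=\infty$, and $\beta=0,\pi/2$ so that (a) $-\mathcal{D}$ is nonnegative up to lower-order corrections that can be absorbed, and (b) the inclusion $\widetilde{\mathcal{H}}^k\hookrightarrow\mathcal{H}^{k-1}$ is continuous, so that $\mathcal{B}\eta$ can be brought back into the $\mathcal{H}^k$-estimate for $g$. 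Then, choosing a coupling constant $\lambda>0$ large enough to absorb the cross terms, prove a coupled a priori estimate of the form
\[
\frac{d}{d\tau}\bigl(|g|_{\mathcal{H}^k}^2 + \lambda|\eta|_{\widetilde{\mathcal{H}}^k}^2\bigr) + c\,|g|_{\mathcal{H}^k}^2 + c\mu\,|\nabla_{z,\beta}\eta|_{\widetilde{\mathcal{H}}^k}^2 \le C\alpha\bigl(|g|_{\mathcal{H}^k}^2 + |\eta|_{\widetilde{\mathcal{H}}^k}^2\bigr) + \text{(cubic)}.
\]
Verifying this identity, together with the required Moser-type product estimates in the new norm, is what I expect to be the technical heart of the paper.

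\emph{Conclusion.} Once the coupled estimate is closed, a bootstrap on $\tau\in[0,\infty)$ gives global-in-$\tau$ smallness of $(g,\eta)$, hence a finite-time blowup at $t=1$ with $|\omega(t)|_{L^\infty}\to\infty$ at the Elgindi rate $(1-t)^{-1}$. To pass from self-similar to compactly supported initial data one proceeds as in \cite{Masmoudi,elgindi1906}: truncate $u_0$ and $\theta_0$ outside a large ball and use the stability of the profile together with the localization of the singular dynamics to show that the blowup persists. Finally, the stated regularity is read off as follows: parabolic regularity for $\partial_t\theta + u\cdot\nabla\theta = \Delta\theta$ with $u\in C^{1,\alpha}_{t,x}$ gives $\theta\in C^{2,\alpha;3,\alpha}$, while the Biot--Savart law and the uniform $C^\alpha$ control on $\omega$ yield $u\in L^2\cap C^{1,\alpha}_{t,x}([0,1)\times\mathbb{R}^3)$.
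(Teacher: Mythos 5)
Your overall skeleton (perturb Elgindi's profile $F_*$, give the temperature its own scaling, design a second weighted norm in which the dissipation has a sign, and close a coupled energy estimate) is the right one, but the specific mechanism you propose for the dissipative term has a genuine gap, and it is exactly the point where the paper's new idea enters. You rescale the temperature only in amplitude, $\theta=(1-t)^{a}\Theta(\tau,z,\beta)$, keeping the \emph{same} spatial variable $z$ as the vorticity; then, as you yourself note, the self-similar image of $\Delta$ carries a prefactor that degenerates as $t\to1$. A degenerating dissipation cannot be used as the uniform coercive term $c\mu|\nabla_{z,\beta}\eta|^2_{\widetilde{\mathcal H}^k}$ that your coupled inequality requires: the equations for $\nabla\theta$ contain the order-one stretching terms $\Lambda_i(\Phi_F)$ (the analogues of $\nabla u\cdot\nabla\theta$), and these, together with the buoyancy feedback into the $g$-equation, are absorbed in the paper precisely by the dissipation, not by the damping from the scaling alone. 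The paper's resolution is a \emph{second, spatial} rescaling: the temperature-gradient profiles live on their own scale $\bar y=l_2(s)y$, with $l_1(s)=e^{-s}$ and $l_2(s)$ chosen as in (\ref{l_1l_2}) so that the Laplacian's coefficient $\big(\tfrac{\mu l_2}{\lambda^{1+\delta}}\big)^{2/\alpha}\lambda$ stays bounded above and below (Proposition \ref{prop_s}), i.e.\ the dissipation is kept order one while the buoyancy forcing is given the decaying factor $l_1(s)$. Without this extra degree of freedom your scheme has no non-degenerate negative term to absorb either the linear coupling $\Lambda_i(\Phi_F)\xi$, $\Lambda_i(\Phi_F)\phi$ (cf.\ Proposition \ref{Mest1}, whose large piece $l_2^{1/\alpha}(0)(\cdot)$ is dominated only by the dissipation) or the buoyancy term.

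A second, related problem is item (b) of your plan: controlling the buoyancy by an embedding $\widetilde{\mathcal H}^k\hookrightarrow\mathcal H^{k-1}$ of the temperature \emph{energy}. In the paper this is impossible at the level of the energy $X(s)$: the new norms $\mathcal W^k_1,\mathcal W^k_2,\mathcal W^k_3$ carry weights $\bar\rho^2$, $\bar\rho^{\eta}$ (and milder angular weights), which are too weak near $\bar\rho=0$ and near $\beta=0,\pi/2$ to dominate Elgindi's weight $\tfrac{(1+y)^4}{y^4}\sin^{-\gamma}(2\beta)$. What actually controls $|\xi|_{\mathcal H^k}$ is the \emph{dissipation} quantity $Y(s)$, through the weighted Hardy inequality, with the large constant $\alpha^{-2k+1}$ (inequality (\ref{embingding_laplace})); this again requires the dissipation coefficient to be non-degenerate, and it is why the coupled energy is $\mathcal E(s)=|\varepsilon|^2_{\mathcal H^k}+C\alpha^{-2k+1}X(s)$ with $\theta_0$ taken very small, rather than a large fixed coupling constant absorbing cross terms. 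Two further, more minor, deviations: the paper takes the temperature profile to be identically zero (no partner profile $\Theta_*$ is constructed — the buoyancy contribution is made asymptotically negligible by $l_1(s)$), and it works in the dynamic-rescaling/modulation framework of \cite{Masmoudi}, where the ODE for $\lambda$ must be corrected by the temperature term $l_1(s)L_{12}(\xi)(0)$ to preserve the normalization $L_{12}(\varepsilon)(0)=0$ on which the coercivity of $\mathcal M_F$ rests; your fixed self-similar formulation leaves this normalization issue unaddressed.
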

\begin{rem}
The profile of the vorticity we constructed is separated into two parts under self-similar coordinate: $W=F+\varepsilon$ where $F$ is the main term introduced in \cite{Elgindi1904}. As a remainder term, the $L^\infty$ norm of~$\varepsilon$ is a lower order term compared with F, so the blowup rate which is determined by F, is $\frac{1}{1-t}$, and satisfies:
	\begin{equation}
		\lim\limits_{t\rightarrow 1}\int^t_0|\omega(s)|_{L^{\infty}}ds=+\infty.
	\end{equation}
	Note that before the blowup time, velocity field $u$ is $C^{1,\alpha}$ smooth, since the weighted Sobolev space $\mathcal{H}^k$ is embedded in H\"older continuous space $C^\alpha$. By the classical result for parabolic equations, temperature $\theta$ satisfies that $\partial^2_{tt}\theta , \partial_t\nabla^2 \theta, \nabla^3\theta$ are $C^{\alpha}$ smooth before blowup time. So the solution we constructed is strong solution.

\end{rem}
\begin{rem}
	Compared with the global well-posedness result in \cite{2012Global}, Hmidi and Rousset's result requires high regularity of velocity, which the solutions we construct following aren't satisfied.
\end{rem}

Next, we explain our main ideas. We start with 3D Boussinesq equation (\ref{eq}) under axi-symmetric and vanishing swirl case, and it is convenient to consider vorticity $\omega=\partial_3 u_r-\partial_r u_3$, rather than velocity $u$. The equation can be written as follows:
\begin{equation}\label{eq_omega_theta}
	\begin{cases}
		&\partial_t\omega+(u_r\partial_r+u_3\partial_3)\omega=\frac{u_r\omega}{r}+\partial_r\theta,\\
		&\partial_{t}\theta+(u_r\partial_r+u_3\partial_3)\theta=\Delta \theta.
	\end{cases}
\end{equation}

 For the blowup profile of the vorticity, we follow the construction in \cite{Elgindi1904} and \cite{Masmoudi}. As for the temperature, note that the parabolic equation that temperature $\theta$ is governed by has a smoothing effect and is weakly coupled with the vorticity equation since the temperature term $\partial_r\theta$ on the right-hand side of vorticity equation is linear. In the blowup mechanism of the solution we construct, we regard the temperature $\theta$ as a perturbation term that is not involved in the blowup.

 In order to obtain the stability estimate, we need to consider the effect of the temperature on the vorticity, establish the estimate of $\partial_r\theta$ which is on the right-hand side of the vorticity equation, that is, we need the estimate of the gradient of the temperature. The main difficulty we face is that the gradient of temperature is not well coupled with the weighted Sobolev norm $\mathcal{H}^k$ introduced in \cite{Elgindi1904}.

To overcome this difficulty, we introduce a different scaling formation for temperature equation and new weighted Sobolev norms $\mathcal{W}^k_1, \mathcal{W}^k_2, \mathcal{W}^k_3$(see the definition in (\ref{normdef}). More specifically, our main aim in the following sections is to establish the following estimates of the gradient of temperature:
\begin{equation}
	\frac {d}{ds}X(s)\leq -c X(s)+\frac{C}{\alpha^{3/2}}|\varepsilon(s)|_{\mathcal{H}^k}X(s)-C Y(s),
\end{equation}
Where constant $c,C>0$, X(s) is the sum of the following weighted norms for the gradient of temperature that we want to control, and $Y(s)$ is a sum of high order derivatives that come from Laplace term in temperature equation:
\begin{equation}
	\begin{array}{l}
			X(s)=|\xi(s)|_{\mathcal{W}^k_1}^2+|\xi(s)|_{\mathcal{W}^k_2}^2+|\phi(s)|_{\mathcal{W}^k_3}^2,\\
		Y(s)=|\frac{1}{\bar\rho}\partial_\beta\xi(s)|_{\mathcal{W}^{k}_1}^{2}+|\frac{1}{\bar\rho}D_{\bar\rho}\xi(s)|_{\mathcal{W}^{k}_1}^{2}+
		|\frac{1}{\bar\rho}\partial_\beta\xi(s)|_{\mathcal{W}^{k}_2}^{2}+|\frac{1}{\bar\rho}D_{\bar\rho}\xi(s)|_{\mathcal{W}^{k}_2}^{2}+
		|\frac{1}{\bar\rho}\partial_\beta\phi(s)|_{\mathcal{W}^{k}_3}^{2}+|\frac{1}{\bar\rho}D_{\bar\rho}\phi(s)|_{\mathcal{W}^{k}_3}^{2}
	\end{array}
\end{equation}
where $\xi,\phi$ are profiles of gradient of temperature $\partial_r\theta, \partial_{3}\theta$ under the self-similar coordinate.

The strategy is that
 we obtain $\mathcal{H}^k$ norm estimates of $\xi$ from $X(s)$, since we can estimate $\mathcal{H}^k$ norm by $\mathcal{W}_1^k, \mathcal{W}_2^k$ norm. And the scaling formation for temperature we establish in section  \ref{section3} ensure that $\partial_r \theta$ term from vorticity equation is eliminated in stability estimate by $Y(s)$ from the temperature equation. Then combined with the stable estimate for the remainder term of vorticity~$\varepsilon$, i.e. we set $\mathcal{E}(s)=C\alpha^{-2k+1}X(s)+|\varepsilon(s)|_{\mathcal{H}^k}^2$, then  $\mathcal{E}(s)$ controls both remainder term~$\varepsilon$ and gradient of temperature. We can establish following energy estimation formula:
\begin{equation}
	\frac {d}{ds}\mathcal{E}(s)\leq -c\mathcal{E}(s)+\frac{C}{\alpha^{3/2}}\mathcal{E}(s)^{3/2},
\end{equation}
which leads to our main Theorem \ref{mainthm}.

\section{Preliminaries}
In this section, we introduce some notations and lemma inherited from \cite{Elgindi1904,Masmoudi}, and explain the strategy we set.
\subsection{Notations}
In the following argument $r$ will denote the two dimensional radial variable:
\begin{equation}
r=\sqrt{x_1^2+x_2^2},\qquad x=(x_1,x_2,x_3).
\end{equation}
$\rho$ and $\beta $ will denote three dimensional radial variable and angle variable between plane $x_3 = 0$ and the $x_3$ axis :
\begin{equation}\label{beta}
\beta=\arctan(\frac{x_3}{r}),\quad\quad \rho=|x|=\sqrt{r^2+x_3^2}.
\end{equation}
And write $R$ :
\begin{equation}\label{R}
R=\rho^\alpha,
\end{equation}
where $\alpha>0$ is a small constant which is chosen in~\cite{Elgindi1904}. Since the vortex we will consider following is axis-symmetric and is odd in $x_3$, the ranges of $\beta$ and $R$ are $[0,\pi/2]$ and $[0,\infty)$.
The main parameters and functions used here :
\[\eta=\frac{99}{100}, \qquad \alpha>0,\qquad \gamma=1+\frac{\alpha}{10},\]
\[\Gamma(\beta)=(\sin\beta\cos^2\beta)^{\alpha/3},\] and \[K(\beta)=3\sin\beta\cos^2\beta.\]
Besides, define~$L_{12}(f)(y)$:
\begin{equation}\label{L_12}
	L_{12}(f)(y)=\int_y^\infty \int_0^{\pi/2} f(z,\beta)K(\beta)\frac{dzd\beta}{z}.
\end{equation}
For simplicity, set:
$$D_\beta(f)=\sin(2\beta)\partial_\beta f, \quad D_R(f)=R\partial_R f, \quad and \quad D_\rho(f)=\rho\partial_\rho f.$$
Define weighted Sobolev norm $\mathcal{H}^k([0,\infty)\times [0,\pi/2])$ :
\begin{equation}\label{HsNorm}
	|f|_{\mathcal{H}^k}^2=\sum_{i=0}^k\left|(D_R)^i f \frac{(1+R)^2}{R^2}\frac{1}{\sin^{\eta/2}(2\beta)}\right|_{L^2}^2+\sum_{\substack{i\geq 1\\1\leq i+j\leq k} }\left|(D_\beta)^iD_R^jf\frac{(1+R)^2}{R^2}\frac{1}{\sin^{\gamma/2}(2\beta)}\right|^2_{L^2}.\end{equation}
Recall from \cite{Elgindi1904} that $\mathcal{H}^k$ embeds continuously in $L^\infty$ and H\"older space, and in fact there exists a universal constant $C>0$ that independent of $\alpha$, such that
\begin{equation}\label{space}
		|g|_{L^{\infty}}\leq \frac{C}{\sqrt{\alpha}}|g|_{\mathcal{H}^2}\quad\quad
		|g|_{C^{\mu}}\leq \frac{C}{\sqrt{\alpha}}|g|_{\mathcal{H}^2},\quad for~any~ 0<\mu<\alpha.
\end{equation}

We introduce several new weighted Sobolev spaces, define inner products:
\begin{small}
	\begin{equation}\label{normdef}
		\begin{aligned}
			\left<f,g\right>_{\mathcal{W}_1^k}=&\int D_{\rho}^kf\cdot D_{\rho}^kg \cdot\rho^2 \sin^{2-\eta}(2\beta)d\beta d \rho+\sum_{\substack{(i,j)\neq(0,k),\\0\leq i+j\leq k}}\int D_\beta^iD_{\rho}^jf\cdot D_\beta^iD_{\rho}^j g\cdot\rho^2 \sin^{-\eta}(2\beta)d\beta d \rho,\\
			\left<f,g\right>_{\mathcal{W}_2^k}=&\int D_{\rho}^k f\cdot D_{\rho}^k g\cdot \rho^{\eta}\sin^{2-\eta}(2\beta)d\beta d \rho+\sum_{\substack{(i,j)\neq(0,k),\\0\leq i+j\leq k}}\int D_\beta^i D_{\rho}^jf\cdot D_\beta^i D_{\rho}^jg\cdot \rho^{\eta}\sin^{-\eta}(2\beta)d\beta d \rho,\\
			\left<f,g\right>_{\mathcal{W}_3^k}=&\sum_{0\leq i+j\leq k}\int D_{\rho}^i D_\beta^j f\cdot D_{\rho}^i D_\beta^j g \cdot\rho^2 \cos^{2-\eta}\beta d\beta d \rho+\\
			&+\sum_{0\leq i+j\leq k-1}\int D_\beta^{i}D_{\rho}^j \partial_\beta f\cdot D_\beta^{i}D_{\rho}^j \partial_\beta g\cdot \rho^2\cos^{2-\eta}\beta d\beta d \rho.
		\end{aligned}
	\end{equation}
\end{small}
And the corresponding norms are~$|f|_{\mathcal{W}_i^k}^2=\left<f,f\right>_{\mathcal{W}_i^k}, i=1,2,3.$

\begin{rem}
In our choice of the norm, the $\rho$ weight is the same as the $\rho$ weight in polar coordinate, for the weight of $\mathcal{W}_3^k$ norm, when we involve a $\partial_\beta$ derivative, there is one less $\sin(2\beta)$ weight than the other terms. And note that the variables in the integrals of inner products above depend on the coordinate we consider, when we work under $(\bar{y},\beta)$ coordinate, we replace $\rho$ by $\bar\rho=\bar{y}^{1/\alpha}$.
\end{rem}

We consider the 3D incompressible Boussinesq equations in the case that the velocity is axi-symmetric with vanishing swirl, then velocity $u$ has form: $u(t,x)=u_r(t, r, x_3)e_r+u_3(t, r, x_3)e_3$, it is convenient to consider the blowup behavior in terms of vorticity:
\begin{equation}
	\begin{aligned}
		\begin{cases}
			&\partial_t\omega+u_r\partial_r\omega+ u_3\partial_{3} \omega = \frac{\omega u_r}{r}+ \partial_{r}\theta,\\
			&\partial_t \theta+u_r\partial_r\theta+ u_3\partial_{3} \theta = \Delta \theta.
		\end{cases}
	\end{aligned}
\end{equation}
By Biot-Savart law, we have stream function $\tilde{\Phi}$ that satisfies:
\begin{equation}
	\begin{cases}
		\partial_r(\frac{1}{r}\partial_r \tilde{\Phi})+\frac{1}{r}\partial_{33}\tilde{\Phi}=-\omega,\\
		u_r=\frac{1}{r}\partial_3\tilde{\Phi},\ \ \ \ u_3=-\frac{1}{r}\partial_r\tilde{\Phi}.
	\end{cases}
\end{equation}
For temperature function $\theta$, we consider
the gradient in the directions of $r,x_3$ denoted by $\theta_r, \theta_3$, and set $r\Phi=\tilde{\Phi}$, the system becomes:
%
\begin{equation}\label{cyeq}
	\begin{cases}
		\partial_t\omega+(u_r\partial_r+u_3\partial_3)\omega=\frac{u_r\omega}{r}+\partial_r\theta,\\
		-\partial_{rr}\Phi-\partial_{33}\Phi-\frac{1}{r}\partial_r\Phi+\frac{\Phi}{r^2}=\omega,\\
		\partial_t\theta_r+(u_r\partial_r+u_3\partial_3)\theta_r+(\partial_r u_r)\theta_r+(\partial_r u_3 )\theta_3=\Delta\theta_r-\frac{\theta_r}{r^2},\\
		\partial_t\theta_3+(u_r\partial_r+u_3\partial_3)\theta_3+(\partial_3 u_r)\theta_r+(\partial_3 u_3) \theta_3=\Delta\theta_3.
	\end{cases}
\end{equation}
We choose that the initial value of $\omega_0,\theta_0$ is odd in $x_3$, so because of the equation $(\ref{eq_omega_theta})$,the vorticity $\omega$ streaming function $\Phi$ and temperature $\theta$ still keep this symmetry:
\begin{equation}
	\omega(r,-x_3)=-\omega(r,x_3),\quad\theta(r,-x_3)=-\theta(r,x_3),\quad \Phi(r,x_3)=-\Phi(r,-x_3)
\end{equation}
As for the gradient of $\theta$, we have:
\begin{equation}\label{boundary_condition}
		\partial_r\theta(r,-x_3)=-\partial_r\theta(r,x_3),\quad \partial_{3}\theta(r,-x_3)=\partial_{3}\theta(r,x_3),
\end{equation}
and boundary condition
\begin{equation}\label{boundary_condition2}
	\begin{aligned}
	&\partial_r\theta(r,0)=\partial_r\theta(0,x_3)=0,\\
	&\partial_{33}\theta(r, 0)=\partial_{r3}\theta(0, x_3)=0,
		\end{aligned}
\end{equation}
since $\partial_{3}\theta$ is axisymmetric and odd in $x_3$. Note that such boundary condition offers the vanishing order of $\theta$, and allow us to integrate by parts .

\subsection{Coordinate transformation}
We inherit the notations from ~\cite{Masmoudi} and pass to the spherical coordinate $(R, \beta)$ we define in (\ref{beta}, \ref{R}), set
\begin{align}
	&\omega(r,x_3)=\Omega(R,\beta) \quad\quad \Phi(r,x_3)=\rho^2\Phi_\Omega(R,\beta),\\
	&\theta_r(r,x_3)=\tilde{\xi}(R,\beta)\quad\quad \theta_3(r,x_3)=\tilde{\phi}(R,\beta).
\end{align}

Then we have
\begin{equation}\label{prp3}
	\partial_r=\frac{\cos\beta}{\rho}\alpha D_R-\frac{\sin\beta}{\rho}\partial_\beta, \ \ \ \
	\partial_{3}=\frac{\sin\beta}{\rho}\alpha D_R+\frac{\cos\beta}{\rho}\partial_\beta,
\end{equation}
and
\begin{equation}\label{uru3}
	\begin{aligned}
		&u_r=\rho(2\sin\beta\Phi_\Omega+\alpha\sin\beta D_R\Phi_\Omega+\cos\beta\partial_\beta\Phi_\Omega),\\
		&u_3=\rho(-\frac{1}{\cos\beta}\Phi_\Omega-2\cos\beta\Phi_\Omega-\alpha\cos\beta D_R\Phi_\Omega+\sin\beta\partial_\beta\Phi_\Omega).
	\end{aligned}
\end{equation}
By such transformation, and recall that $R=\rho^\alpha$, the original equation can be written in coordinate $(R, \beta)$ as follows:
\begin{equation}\label{OmegaEvolution}
	\begin{cases}
	&\partial_t\Omega+U(\Phi_\Omega)\partial_\beta\Omega+V(\Phi_\Omega)\alpha D_R\Omega=\mathcal{R}(\Phi_\Omega)\Omega+\tilde{\xi},\\
	&\partial_t\tilde{\xi}+U(\Phi_\Omega)\partial_\beta\tilde{\xi}+V(\Phi_\Omega)\alpha D_R\tilde{\xi}+\Lambda_1(\Phi_\Omega)\tilde{\xi}+\Lambda_2(\Phi_\Omega)\tilde{\phi}=\Delta\tilde{\xi}-\frac{1}{R^{\frac{2}{\alpha}}\cos^2\beta}\tilde{\xi}
	,\\
	&\partial_t\tilde{\phi}+U(\Phi_\Omega)\partial_\beta\tilde{\phi}+V(\Phi_\Omega)\alpha D_R\tilde{\phi}+\Lambda_3(\Phi_\Omega)\tilde{\phi}+\Lambda_4(\Phi_\Omega)\tilde{\xi}=\Delta\tilde{\phi},
\end{cases}
\end{equation}
where
\begin{equation}\label{U_V}
	\begin{aligned}
		&U(\Phi_\Omega)=\frac{1}{\rho}u_\beta=-3\Phi_\Omega-\alpha D_R\Phi_\Omega,\quad
		V(\Phi_\Omega)=\frac{1}{\rho}u_\rho=\partial_\beta\Phi_\Omega-\tan\beta\Phi_\Omega, \\
		&\mathcal{R}(\Phi_\Omega)=\frac{1}{r}u_r=\frac{1}{\cos\beta}(2\sin\beta\Phi_\Omega+\alpha\sin\beta D_R\Phi_\Omega+\cos\beta \partial_\beta\Phi_\Omega),
	\end{aligned}
\end{equation}
and
\begin{equation}\label{Lambda}
	\begin{aligned}
		&\Lambda_1(\Phi_\Omega)=\partial_r u_r(r,x_3)=\sin\beta\cos\beta(\alpha D_R)^2\Phi_\Omega+(\cos^2\beta-\sin^2\beta)\alpha D_R\partial_\beta\Phi_\Omega\\
		&-\sin\beta\cos\beta\partial_{\beta\beta}\Phi_\Omega+\sin(2\beta)\alpha D_R\Phi_\Omega+\cos(2\beta)\partial_\beta\Phi_\Omega,\\
		&\\
		&\Lambda_2(\Phi_\Omega)=\partial_r u_{3}(r,x_3)=-\cos^2\beta(\alpha D_R)^2\Phi_\Omega+\sin(2\beta)\alpha D_R\partial_\beta\Phi_\Omega\\
		&-\sin^2\beta\partial_{\beta\beta}\Phi_\Omega-2(1+\cos^2\beta)\alpha D_R\Phi_\Omega+(\tan\beta+\sin(2\beta))\partial_\beta\Phi_\Omega+(\tan^2\beta-3)\Phi_\Omega,\\
		&\\
		&\Lambda_3(\Phi_\Omega)=\partial_3 u_r(r,x_3)=\sin^2\beta(\alpha D_R)^2\Phi_\Omega+\sin(2\beta)\alpha D_R\partial_\beta\Phi_\Omega\\
		&+\cos^2\beta\partial_{\beta\beta}\Phi_\Omega+(1+2\sin^2\beta)\alpha D_R\Phi_\Omega+\sin(2\beta)\partial_\beta\Phi_\Omega+2\Phi_\Omega,\\
		&\\
		&\Lambda_4(\Phi_\Omega)=\partial_3 u_{3}(r,x_3)=-\sin\beta\cos\beta(\alpha D_R)^2\Phi_\Omega-(\cos^2\beta-\sin^2\beta)\alpha D_R\partial_\beta\Phi_\Omega\\
		&+\sin\beta\cos\beta\partial_{\beta\beta}\Phi_\Omega-(\tan\beta+\sin(2\beta))\alpha D_R\Phi_\Omega-2\cos^2\beta\partial_\beta\Phi_\Omega-2\tan\beta\Phi_\Omega.
	\end{aligned}
\end{equation}
\subsection{The Setup}\label{setup}

First, we explain the main ideas, in the equation of (\ref{eq}) the temperature $\theta$ is obtained by a linear transport-diffusion equation coupled with velocity $u$, and heuristically speaking, the influence of $\theta$ on $u$ can be controlled by the smoothing effect of the transport-diffusion equation for $\theta$. In this way, we construct a solution such that $\theta$ is regarded as a small perturbation to the incompressible fluid.

More specifically, the solution we construct is nearly self-similar, which can be regarded as a perturbation to a self-similar profile constructed by Elgindi in~\cite{Elgindi1904}, which is self-similar and non-swirl, the vorticity has the form:

\begin{equation*}
	\Omega=\frac{1}{T-t}F\Big(\frac{R}{(T-t)^{1+\delta}},\beta\Big),
\end{equation*}
where $\delta$ is a constant depending on $\alpha$, and $F=F_*+\alpha^2 g$ satisfying
\begin{equation}\label{F}
	F_*=\frac{\Gamma(\beta)}{c}\frac{4\alpha z}{(1+z)^2},\qquad |g|_{\mathcal{H}^k}\leq C,
\end{equation}
where $z=\frac{R}{(T-t)^{1+\delta}}, c=\int_{0}^{2\pi}3\sin\beta\cos^2\beta\Gamma(\beta)d\beta$.

And we write the equation of $F$
\begin{equation}\label{eq_F}
	F+(1+\delta)z\partial_{z}F+U(\Phi_{F})\partial_{\beta}F+V(\Phi_{F})\alpha z\partial_{z}F=R(\Phi_{F})F,
\end{equation}
then following the idea in \cite{Masmoudi}, we reformulate the equation by using the dynamic rescaling, set time variable $s$ and function~$\lambda(s),\mu(s)$ that satisfy:
\begin{align}\label{changeofvariable}
	y&=\frac{\mu(s) R}{\lambda^{1+\delta}(s)},\quad\quad\quad\frac{ds}{dt}=\frac{1}{\lambda(s)},\nonumber\\
	\Omega(R,\beta,t)&=\frac{1}{\lambda(s)}W(y,\beta,s ),~~ \Phi_\Omega(R,\beta,t)=\frac{1}{\lambda(s)}\Phi_W(y,\beta,s ).
\end{align}

For $\tilde\xi, \tilde\phi$ we give a different self-similar transformations: first we introduce parameters $l_1(s),l_2(s)$, and variable~$\bar{y}=\bar\rho^{\alpha}=l_2(s)y$, then we have a new self-similar coordinates~$(\bar{y},\beta)$, and we set:
\begin{equation*}
	\begin{aligned}
		&\tilde\xi(R,\beta,t)=\frac{l_1(s)}{\lambda^2(s)}\xi(l_2(s)y,\beta,s)=\frac{l_1(s)}{\lambda^2(s)}\xi(\bar y,\beta,s),\\
		&\tilde\phi(R,\beta,t)=\frac{l_1(s)}{\lambda^2(s)}\phi(l_2(s)y,\beta,s)=\frac{l_1(s)}{\lambda^2(s)}\phi(\bar y,\beta,s),
	\end{aligned}
\end{equation*}
\begin{rem}\label{remark}
	By the commutativity of the second derivative of $\theta$ :~$\partial_r \partial_{3}\theta=\partial_{3}\partial_{r}\theta$, we have compatibility conditions~: $\partial_{3}\xi=\partial_r\phi$ in~$(\bar y,\beta)$ coordinates:
	\begin{equation}\label{exchangeorder}
		\sin\beta \alpha D_{\bar y}\xi+\cos\beta\partial_\beta \xi=\cos\beta \alpha D_{\bar y}\phi-\sin\beta\partial_\beta\phi.
	\end{equation}
And by boundary condition (\ref{boundary_condition2}), we have boundary condition of $\phi$

\begin{equation}\label{boundary_condition3}
	\left.\partial_{\beta} \phi\right|_{\beta=0}=\left.\partial_{\beta} \phi\right|_{\beta=\pi / 2}=0.
\end{equation}
\end{rem}
From the above transformation, we can start to consider the solution we will construct. For $\xi,\phi$ that satisfy the commutativity condition in Remark $\ref{remark}$, we can solve the corresponding original function $\theta$, So we consider the solution~$(W,\xi,\phi)$, as a disturbance near $(F,0,0)$:

\begin{align*}
	&W(y,\beta,s)=F(y,\beta)+\varepsilon(y,\beta,s),~~\Phi_W(y,\beta,s)=\Phi_F(y,\beta)+\Phi_\varepsilon(y,\beta,s),\\
	&\xi(y,\beta,s)=0+\xi(y,\beta,s),~~\phi(y,\beta,s)=0+\phi(y,\beta,s).
\end{align*}

We write the equation of $W$:
\begin{equation}
W_{s}+\frac{\mu_{s}}{u} y \partial_{y} W-\frac{\lambda_{s}}{\lambda} (W+(1+\delta)y\partial_y W)+U\left(\Phi_{W}\right) \partial_{\theta} W+V\left(\Phi_{W}\right) \alpha y \partial_{y} W=\mathcal{R}\left(\Phi_{W}\right) W+l_1 (s)\xi(l_2(s)y,\beta,s),
\end{equation}
Combined with the equation of $F$ (\ref{eq_F}), we obtain the equation of $(\varepsilon,\xi,\phi)$:
\begin{align}\label{Equationepsilon}
	\begin{cases}
		&\partial_s \varepsilon+\frac{\mu_s}{\mu}y\partial_y \varepsilon-\Big(\frac{\lambda_s}{\lambda}+1\Big)(\varepsilon+(1+\delta)y\partial_y\varepsilon)+\mathcal{M}_F\varepsilon=E+N_2(\varepsilon)+l_1(s)\xi(l_2(s)y),\\
		&\partial_s\xi+\frac{\mu_s}{\mu}\bar{y}\partial_{\bar{y}}\xi-\Big(\frac{\lambda_s}{\lambda}+1\Big)(2\xi+(1+\delta)\bar{y}\partial_{\bar{y}}\xi)+\Big(2+\frac{l'_{1}}{l_1}\Big)\xi+\Big(1+\delta+\frac{l'_{2}}{l_2}\Big)\bar{y}\partial_{\bar{y}}\xi\\
		&+\mathcal{M}_\xi(\xi,\phi)=\Big(\frac{\mu l_2}{\lambda^{1+\delta}}\Big)^{\frac{2}{\alpha}}\lambda\bar{y}^{-\frac{2}{\alpha}}\big(\tilde{\Delta}\xi-\frac{1}{\cos^2\beta}\xi\big),\\
		&\partial_s\phi+\frac{\mu_s}{\mu}\bar{y}\partial_{\bar{y}}\phi-\Big(\frac{\lambda_s}{\lambda}+1\Big)(2\phi+(1+\delta)\bar{y}\partial_{\bar{y}}\phi)+\Big(2+\frac{l'_{1}}{l_1}\Big)\phi+\Big(1+\delta+\frac{l'_{2}}{l_2}\Big)\bar{y}\partial_{\bar{y}}\phi\\
		&+\mathcal{M}_\phi(\xi,\phi)=\Big(\frac{\mu l_2}{\lambda^{1+\delta}}\Big)^{\frac{2}{\alpha}}\lambda\bar{y}^{-\frac{2}{\alpha}}\big(\tilde{\Delta}\phi\big),\\
		&-\alpha^2y^2\partial_{yy}\Phi_\varepsilon-\alpha(5+\alpha)y\partial_y\Phi_\varepsilon-\partial_{\beta\beta}\Phi_\varepsilon+\partial_\beta\big(\tan\beta
		\Phi_\varepsilon\big)-6\Phi_\varepsilon=\varepsilon.
	\end{cases}
\end{align}

First in the equation of $\varepsilon$, $\mathcal{M}_{F}$ is the linearization operator, $N_2(\varepsilon)$ is the non-linear terms, $E$ is the error from the rescaling formation, inherited from \cite{Masmoudi}:
\begin{equation}
	\begin{aligned}
		&\mathcal{M}_{F} \varepsilon=\varepsilon+(1+\delta)y\partial_y\varepsilon+U\left(\Phi_{F}\right) \partial_{\beta} \varepsilon+V\left(\Phi_{F}\right) \alpha y \partial_{y} \varepsilon+U\left(\Phi_{\varepsilon}\right) \partial_{\beta} F+\\
		\quad\quad&+V\left(\Phi_{\varepsilon}\right) \alpha y \partial_{y} F-\mathcal{R}\left(\Phi_{F}\right) \varepsilon-\mathcal{R}\left(\Phi_{\varepsilon}\right) F\\
		&E=-\frac{\mu_{s}}{\mu} y \partial_{y} F+\left(\frac{\lambda_{s}}{\lambda}+1\right) (F+(1+\delta)y\partial_y F)\\
		&N_{2}(\varepsilon)=-U\left(\Phi_{\varepsilon}\right) \partial_{\beta} \varepsilon-\alpha V\left(\Phi_{\varepsilon}\right) y \partial_{y} \varepsilon+\mathcal{R}\left(\Phi_{\varepsilon}\right) \varepsilon
	\end{aligned}
\end{equation}

By the explicit form of $F$ (\ref{F}) and elliptic estimate of $\bar\Phi_F$, the remainder term of stream function $\Phi_F$ (\ref{thm_biot}), in \cite{Masmoudi} author rewrites the linearized operator $\mathcal{M}_{F} \varepsilon$ as:
\begin{equation}
	\mathcal{M}_{F}\varepsilon=\mathcal{L}_{F_{*}}^{T}\varepsilon+\frac{2 y^{2} \Gamma}{c(1+y)^{3}} L_{12}\left(\frac{3}{1+y} \sin (2 \beta) \partial_{\beta} \varepsilon\right)(0)+\sqrt{\alpha} \tilde{L},
\end{equation}
where
\begin{equation}
	\tilde{L} \varepsilon=-\frac{1}{\sqrt{\alpha}}\left\{\alpha V\left(F_{*}\right) y \partial_{y} \varepsilon+U\left(\Phi_{\varepsilon}\right) \partial_{\beta} F_{*}+\alpha V\left(\Phi_{\varepsilon}\right) y \partial_{y} F_{*}+\text{l.o.t.}\right\},
\end{equation}
where l.o.t.=lower order terms, and obtains the following estimate
	\begin{equation}\label{elgindiest2}
	\begin{aligned}
		\left|\left<\tilde{L}g, g\right>_{\mathcal{H}^{k}}\right| &\leq C|g|_{\mathcal{H}^{k}}^{2},\\
		\left<\mathcal{M}_F \varepsilon, \varepsilon\right>_{\mathcal{H}^{k}}&\geq c|\varepsilon|_{\mathcal{H}^{k}}^{2},
	\end{aligned}
\end{equation}
for all $g, \varepsilon\in\mathcal{H}^k$, satisfies $L_{12}(\varepsilon)(0)=0.$

In the equations of $\xi,\phi$, $\mathcal{M}_\xi, \mathcal{M}_\phi$ are the gradient of the transport term:
\begin{align*}
	&\mathcal{M}_\xi(\xi,\phi)=U(\Phi_W)\partial_\beta\xi+V(\Phi_W)\alpha\bar{y}\partial_{\bar{y}}\xi+\Lambda_1(\Phi_W)\xi+\Lambda_2(\Phi_W)\phi,\\
	&\mathcal{M}_\phi(\xi,\phi)=U(\Phi_W)\partial_\beta\phi+V(\Phi_W)\alpha\bar{y}\partial_{\bar{y}}\phi+\Lambda_3(\Phi_W)\xi+\Lambda_4(\Phi_W)\phi,
\end{align*}
as for Laplace term, for convenient we set
\begin{equation}\label{D_rho}
		\tilde{\Delta}f=\bar{y}^{2/\alpha}\Delta f=\alpha^2 D_{\bar{y}}^2f+\alpha D_{\bar{y}}f+\frac{1}{\cos\beta}\partial_\beta(\cos\beta\partial_\beta f)
\end{equation}

In order to make the solution we constructed take the formula $(\ref{F})$ as the main term, we have to give the estimate of the stability of $\varepsilon$ at least in $\mathcal{H}^4$. For any $k\geq4$, we have the energy estimate of the equation of $\varepsilon$:

\begin{equation}\label{varepsilonenergy}
	\begin{aligned}
&\frac{1}{2} \frac{d}{d s}\left<\varepsilon, \varepsilon\right>_{\mathcal{H}^{k}}\leq-\left<\mathcal{M}_{F} \varepsilon, \varepsilon\right>_{\mathcal{H}^{k}}+\left<E, \varepsilon\right>_{\mathcal{H}^{k}}+\left|\frac{\mu_{s}}{\mu}\right|\left|\left<y \partial_{y}\varepsilon,\varepsilon\right>_{\mathcal{H}^{k}}\right|+\left|\frac{\lambda_{s}}{\lambda}+1\right|\left|\left<\varepsilon+(1+\delta)y\partial_y\varepsilon, \varepsilon\right>_{\mathcal{H}^{k}}\right|\\
		&+\left<N_{2}(\varepsilon), \varepsilon\right>_{\mathcal{H}^{k}}+l_1(s)\left<\xi(l_2(s)y, \beta, s), \varepsilon(y, \beta, s)\right>_{\mathcal{H}^{k}}\\
		&\leq-c|\varepsilon(s)|_{\mathcal {H}^k}^2+\left|\frac{\lambda_{s}}{\lambda}+1\right||\varepsilon(s)|^{2}_{\mathcal{H}^k}+\frac{C}{\alpha^{3/2}}|\varepsilon(s)|_{\mathcal {H}^k}^3+l_1(s)\left<\xi(l_2(s)y, \beta, s),\varepsilon(y, \beta, s)\right>_{\mathcal{H}^k},
	\end{aligned}
\end{equation}

where every terms appear in~\cite{Masmoudi} except the last term, in fact in~\cite{Masmoudi}, the authors give the following estimate in the case that temperature $\theta$ is vanishing:

\begin{equation}\label{elgindiest}
	\frac{d}{ds}\left<\varepsilon,\varepsilon\right>_{\mathcal {H}^k}\leq -c|\varepsilon|_{\mathcal {H}^k}^2+\frac{C}{\alpha^{3/2}}|\varepsilon|_{\mathcal {H}^k}^3.
\end{equation}
In our case, difference is that we need the estimate of the last term about $\xi$ and different choice of dynamic scaling function $\lambda(s), \mu(s), l_1(s), l_2(s)$. Under dynamic scaling chosen in \ref{section3}, and using Cauchy-Schwartz inequality to the last term of (\ref{varepsilonenergy}), we also have 
\begin{equation}
	\frac{d}{ds}\left<\varepsilon,\varepsilon\right>_{\mathcal {H}^k}\leq -c|\varepsilon(s)|_{\mathcal {H}^k}^2+\frac{C}{\alpha^{3/2}}|\varepsilon(s)|_{\mathcal {H}^k}^3+C'l_1^2(s)|\xi(l_2(s)y,\beta,s)|_{\mathcal{H}^k}^2,
\end{equation}
where $C'>0$ depends only on $c$.

Since $\xi$ is corresponding to $\partial_r \theta$, we need estimate of the gradient of the temperature function. 
First, the equations that $\xi,\phi$ satisfy are parabolic equation, the self-similar scaling is different from the scaling of the velocity field. Therefore, we select a new self-similar rate for the temperature function, with suitable time depending function $l_1(s), l_2(s)$, which are determined in Section \ref{section3}. We obtain energy estimate of $\xi,\phi$ with norm we introduced in (\ref{normdef}),  for equation of $\xi$, we consider the energy estimate under the norms of $\mathcal{W}^k_1$ and $\mathcal{W}_2^k$, and energy estimate of $\phi$ under $\mathcal{W}_3^k$ norm:

\begin{equation}\label{energy}
	\begin{aligned}
		&\frac{1}{2}\frac{d}{ds}\langle\xi,\xi\rangle_{\mathcal{W}_1^k}+\frac{\mu_s}{\mu}\langle\frac{1}{\alpha}\bar\rho\partial_{\bar\rho}\xi,\xi\rangle_{\mathcal{W}_1^k}-
		\Big(\frac{\lambda_s}{\lambda}+1\Big)\big\langle2\xi+\frac{(1+\delta)}{\alpha}\bar\rho\partial_{\bar\rho}\xi,\xi\big\rangle_{\mathcal{W}_1^k}+\Big(2+\frac{l'_{1}}{l_1}\Big)\langle\xi,\xi\rangle_{\mathcal{W}_1^k}\\
		&+\Big(1+\delta+\frac{l'_{2}}{l_2}\Big)\Big\langle\frac{1}{\alpha}\bar\rho\partial_{\bar\rho}\xi,\xi\Big\rangle_{\mathcal{W}_1^k}+\langle\mathcal{M}_\xi(\xi,\phi),\xi\rangle_{\mathcal{W}_1^k}=\Big(\frac{\mu l_2}{\lambda^{1+\delta}}\Big)^{\frac{2}{\alpha}}\lambda\Big\langle\bar\rho^{-2} \big(\tilde{\Delta}\xi
		-\frac{1}{\cos^2\beta}\xi\big),\xi\Big\rangle_{\mathcal{W}_1^k},\\
		&\frac{1}{2}\frac{d}{ds}\langle\xi,\xi\rangle_{\mathcal{W}_2^k}+\frac{\mu_s}{\mu}\langle\frac{1}{\alpha}\bar\rho\partial_{\bar\rho}\xi,\xi\rangle_{\mathcal{W}_2^k}-
		\Big(\frac{\lambda_s}{\lambda}+1\Big)\big\langle2\xi+\frac{(1+\delta)}{\alpha}\bar\rho\partial_{\bar\rho}\xi,\xi\big\rangle_{\mathcal{W}_2^k}+\Big(2+\frac{l'_{1}}{l_1}\Big)\langle\xi,\xi\rangle_{\mathcal{W}_2^k}\\
		&+\Big(1+\delta+\frac{l'_{2}}{l_2}\Big)\Big\langle\frac{1}{\alpha}\bar\rho\partial_{\bar\rho}\xi,\xi\Big\rangle_{\mathcal{W}_2^k}+\langle\mathcal{M}_\xi(\xi,\phi),\xi\rangle_{\mathcal{W}_2^k}=\Big(\frac{\mu l_2}{\lambda^{1+\delta}}\Big)^{\frac{2}{\alpha}}\lambda\Big\langle\bar\rho^{-2}\big(\tilde{\Delta}\xi
		-\frac{1}{\cos^2\beta}\big)\xi,\xi\Big\rangle_{\mathcal{W}_2^k},\\
		&\frac{1}{2}\frac{d}{ds}\langle\phi,\phi\rangle_{\mathcal{W}_3^k}+\frac{\mu_s}{\mu}\langle\frac{1}{\alpha}\bar\rho\partial_{\bar\rho}\phi,\phi\rangle_{\mathcal{W}_3^k} -\Big(\frac{\lambda_s}{\lambda}+1\Big)\Big\langle(2\phi+\frac{(1+\delta)}{\alpha}\bar\rho\partial_{\bar\rho}\phi),\phi\Big\rangle_{\mathcal{W}_3^k}+\Big(2+\frac{l'_{1}}{l_1}\Big)\langle\phi,\phi\rangle_{\mathcal{W}_3^k}\\
		&+\Big(1+\delta+\frac{l'_{2}}{l_2}\Big)\langle\frac{1}{\alpha}\bar\rho\partial_{\bar\rho}\phi,\phi\rangle_{\mathcal{W}_3^k}+\langle\mathcal{M}_\phi(\xi,\phi),\phi\rangle_{\mathcal{W}_3^k}=\Big(\frac{\mu l_2}{\lambda^{1+\delta}}\Big)^{\frac{2}{\alpha}}\lambda\big\langle\bar{\rho}^{-2}\tilde{\Delta}\phi,\phi\big\rangle_{\mathcal{W}_3^k}.
	\end{aligned}
\end{equation}
The reason of introducing these energy estimates is that the sum of three Laplace terms above is coercive under corresponding norm and bounded by
\begin{equation}\label{normestimate}
	-Cl_2^{2/\alpha}(0)\left(|\frac{1}{\bar\rho}\partial_\beta\xi|_{\mathcal{W}^{k}_1}^{2}+|\frac{1}{\bar\rho}D_{\bar\rho}\xi|_{\mathcal{W}^{k}_1}^{2}+
	|\frac{1}{\bar\rho}\partial_\beta\xi|_{\mathcal{W}^{k}_2}^{2}+|\frac{1}{\bar\rho}D_{\bar\rho}\xi|_{\mathcal{W}^{k}_2}^{2}+|\frac{1}{\bar\rho}\partial_\beta\phi|_{\mathcal{W}^{k}_3}^{2}+|\frac{1}{\bar\rho}D_{\bar\rho}\phi|_{\mathcal{W}^{k}_3}^{2}\right),
\end{equation}
which we will proof in Section \ref{est_laplace}. 

We note that this term can eliminate the term $l_1^2(s)|\xi(l_2(s)y, \beta,s)|_{\mathcal{H}^k}^2$ which is appeared in the equation of $\varepsilon$ (\ref{varepsilonenergy}). Recall the definition of $\mathcal{H}^k$ norm (\ref{HsNorm}) , we have two kinds of term $l_1^2(s)|\xi(l_2(s)y, \beta,s)|_{\mathcal{H}^k}^2$  to estimate
\begin{equation}
	l_1^2(s)\int_{0}^{\infty}\int_{0}^{\pi/2}D_y^i\xi(l_2(s)y,\beta,s)^2\sin^{-\eta}(2\beta)\frac{(1+y)^4}{y^4}dyd\beta,\quad 0\leq i\leq k,
\end{equation}
and
\begin{equation}
	l_1^2(s)\int_{0}^{\infty}\int_{0}^{\pi/2}D_y^i D_\beta^j\xi(l_2(s)y,\beta,s)^2\sin^{-\gamma}(2\beta)\frac{(1+y)^4}{y^4}dyd\beta, \quad 1\leq j, 1\leq i+j\leq k.
\end{equation}
Recall that $\bar{y}=\bar\rho^\alpha=l_2(s)y,$ the radial component of the weight of $\mathcal{H}^k$ norm, i.e.$\frac{(1+y)^4}{y^4} dy$, has the form in $\bar\rho$ variable:
\begin{equation}{\label{weight}}
	\frac{(1+y)^4}{y^4} dy=\frac{\alpha}{l_2(s)}\frac{(l_2(s)+\bar\rho^{\alpha})^4}{\bar\rho^{1+3\alpha}}d\bar\rho.
\end{equation}
We can rewrite $l_1^2(s)|\xi(l_2(s)y, \beta,s)|_{\mathcal{H}^k}^2$ as
\begin{equation}
	\begin{aligned}
	\frac{l_1^2(s)}{l_2(s)}\Big\{&
		\sum_{0\leq i\leq k}\int_{0}^{\infty}\int_{0}^{\pi/2}\alpha^{-2i}(D_{\bar\rho}^i\xi)^2\sin^{-\eta}(2\beta)\alpha\frac{(l_2(s)+\bar\rho^\alpha)^4}{\bar\rho^{1+3\alpha}}d\bar\rho d\beta +\\
		& \sum_{1\leq i+j\leq k, 1\leq j}\int_{0}^{\infty}\int_{0}^{\pi/2}\alpha^{-2i}(D_{\bar\rho}^i D_\beta^j\xi)^2\sin^{-\gamma}(2\beta)\alpha\frac{(l_2(s)+\bar\rho^\alpha)^4}{\bar\rho^{1+3\alpha}}d\bar\rho d\beta \Big\}.
			\end{aligned}
\end{equation}
By the choice of $l_1(s), l_2(s)$ in Section \ref{section3}, $l_2(s)$ and $\frac{l_1^2(s)}{l_2(s)}$ have upper bound independent of $\alpha$, can be viewed as constant here. Then for every integral above, the order at infinity is $\bar\rho^{-1+\alpha}$, and order at 0 is $\bar\rho^{-1-3\alpha}$, and the radial component of the weight of (\ref{normestimate}), consider  $|\partial_\beta\xi|_{\mathcal{W}_1^k}$ and $|\partial_\beta\xi|_{\mathcal{W}_2^k}$ in (\ref{normestimate}) for example and write the radial integral for simplicity
\begin{equation}
	\int_{0}^{\infty}\frac{1}{\bar\rho^2}|\partial_\beta\xi|^2\bar\rho^2 d\bar\rho \quad for~ \mathcal{W}_1^k,\quad \int_{0}^{\infty}\frac{1}{\bar\rho^2}|\partial_\beta\xi|^2\bar\rho^\eta d\bar\rho\quad for~\mathcal{W}_2^k.
\end{equation}
Note that $\eta<1$, the order of (\ref{weight}) at 0 and infinity, can be controlled by $\mathcal{W}_1^k,~\mathcal{W}_2^k$ norm respectively.

As for the integral of $\beta$, recall that $D=D_{\bar\rho}$ or~$D_\beta$, then consider the singular term in $\mathcal {H}^k$ norm, i.e.~$D_\beta D^{k-1}\xi$ term, by weighted Hardy inequality we proof in Lemma \ref{hardy_eta}, we have
\begin{equation*}
	\begin{aligned}
		\int_{0}^{\infty}\int_{0}^{\pi/2}(D_\beta D^{k-1}\xi)^2\sin^{-\gamma}(2\beta) d\beta d\bar\rho\leq&\int_{0}^{\infty}\int_{0}^{\pi/2}(D_\beta D^{k-1}\xi)^2\sin^{-2-\eta}(2\beta) d\beta d\bar\rho\\
		\leq&\frac{1}{(1+\eta)^2}\int_{0}^{\infty}\int_{0}^{\pi/2}(\partial_\beta D_\beta D^{k-1}\xi)^2\sin^{-\eta}(2\beta) d\beta d\bar\rho,
	\end{aligned}
\end{equation*}
which is controlled by~$|\frac{1}{\bar\rho}\partial_\beta\xi|_{\mathcal{W}^{k}_1}$, so there exists constant $C>0$ independent of $\alpha$ such that
\begin{equation}\label{embingding_laplace}
	|\xi|_{\mathcal{H}^k}^2\leq C\alpha^{-2k+1}\left\{|\frac{1}{\bar\rho}\partial_\beta\xi|_{\mathcal{W}^{k}_1}^{2}+|\frac{1}{\bar\rho}D_{\bar\rho}\xi|_{\mathcal{W}^{k}_1}^{2}+
	|\frac{1}{\bar\rho}\partial_\beta\xi|_{\mathcal{W}^{k}_2}^{2}+|\frac{1}{\bar\rho}D_{\bar\rho}\xi|_{\mathcal{W}^{k}_2}^{2}\right\}.
\end{equation}
\\
For $k\geq4$ we set
\begin{equation}\label{XY}
	\begin{aligned}
		X(s)=&|\xi(s)|_{\mathcal{W}^k_1}^2+|\xi(s)|_{\mathcal{W}^k_2}^2+|\phi(s)|_{\mathcal{W}^k_3}^2,\\
		Y(s)=&|\frac{1}{\bar\rho}\partial_\beta\xi(s)|_{\mathcal{W}^{k}_1}^{2}+|\frac{1}{\bar\rho}D_{\bar\rho}\xi(s)|_{\mathcal{W}^{k}_1}^{2}+
		|\frac{1}{\bar\rho}\partial_\beta\xi(s)|_{\mathcal{W}^{k}_2}^{2}+|\frac{1}{\bar\rho}D_{\bar\rho}\xi(s)|_{\mathcal{W}^{k}_2}^{2}+
		|\frac{1}{\bar\rho}\partial_\beta\phi(s)|_{\mathcal{W}^{k}_3}^{2}+|\frac{1}{\bar\rho}D_{\bar\rho}\phi(s)|_{\mathcal{W}^{k}_3}^{2}\\
		\mathcal{E}(s)=&C\alpha^{-2k+1}X(s)+|\varepsilon(s)|_{\mathcal{H}^k}^2
	\end{aligned}
\end{equation}

Then~$(\ref{embingding_laplace})$ can be written as:
\begin{equation}\label{xiest}
	|\xi(s)|_{\mathcal{H}^k}^2\leq C\alpha^{-2k+1} Y(s),
\end{equation}
where $C$ is constant independent of $\alpha$.
And by (\ref{normestimate}) can be refitted as

$$\Big(\frac{\mu l_2}{\lambda^{1+\delta}}\Big)^{\frac{2}{\alpha}}\lambda Y(s) \approx  l_2^{2/\alpha}(0)Y(s)$$

$\mathcal{E}(s)$ give the control of $\varepsilon$ and $\xi,\phi$, then our goal is to prove:
\begin{thm}\label{thm2}
	For $\alpha<10^{-14}$, there exist $\delta_0$ and $\kappa >0$, such that for any initial value $(\varepsilon_0,\theta_0)$ satisfying $\mathcal{E}(0)\leq\delta_0\alpha^{3}$ and $L_{12}(\varepsilon_0)=0$.
	Then for any $s\geq 0$, the solution of $(\ref{Equationepsilon})$ satisfies:
	\begin{equation}\label{finalest}
		\mathcal{E}(s)\leq C\mathcal{E}(0)e^{-\kappa s},
	\end{equation}
and
	\begin{equation}\label{finalest2}
		\int_0^\infty |\frac{\mu_s}{\mu}|+|\frac{\lambda_s}{\lambda}+1|ds\leq C\mathcal{E}(0).
	\end{equation}
\end{thm}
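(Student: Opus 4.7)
The plan is to close the proof by a bootstrap/continuity argument, where the key output is the scalar differential inequality
\begin{equation*}
\frac{d}{ds}\mathcal{E}(s)\leq -c\,\mathcal{E}(s)+\frac{C}{\alpha^{3/2}}\mathcal{E}(s)^{3/2}.
\end{equation*}
First, I would fix a continuity interval $[0,S^*)$ on which the bootstrap assumption $\mathcal{E}(s)\leq 2C\mathcal{E}(0)\leq 2C\delta_0\alpha^3$ holds, and choose the modulation parameters $\lambda(s),\mu(s),l_1(s),l_2(s)$ by imposing two orthogonality conditions on $\varepsilon(s)$ (so as to preserve $L_{12}(\varepsilon(s))(0)=0$ used by the coercivity (\ref{elgindiest2})) together with the natural normalization conditions on $l_1,l_2$ coming from the self-similar temperature scaling introduced in Section \ref{section3}. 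These conditions give, in the standard manner of modulation analysis, pointwise estimates of the form $|\mu_s/\mu|+|\lambda_s/\lambda+1|+|l_1'/l_1|+|l_2'/l_2|\leq C\mathcal{E}(s)^{1/2}$ plus cubic error.

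Next, using Elgindi–Ghoul–Masmoudi's coercivity $\langle\mathcal{M}_F\varepsilon,\varepsilon\rangle_{\mathcal{H}^k}\geq c|\varepsilon|_{\mathcal{H}^k}^2$, the energy identity (\ref{varepsilonenergy}) yields
\begin{equation*}
\frac{d}{ds}|\varepsilon(s)|_{\mathcal{H}^k}^2\leq -c|\varepsilon(s)|_{\mathcal{H}^k}^2+\frac{C}{\alpha^{3/2}}|\varepsilon(s)|_{\mathcal{H}^k}^3+C'\,l_1^2(s)|\xi(l_2(s)y,\beta,s)|_{\mathcal{H}^k}^2,
\end{equation*}
which retains the cross term that did not appear in \cite{Masmoudi}. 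In parallel, I would sum the three energy identities (\ref{energy}) for $\xi,\phi$ and use (\ref{normestimate}) to control the Laplace contributions, obtaining
\begin{equation*}
\frac{d}{ds}X(s)\leq -c\,X(s)+\frac{C}{\alpha^{3/2}}|\varepsilon(s)|_{\mathcal{H}^k}X(s)-C\,l_2^{2/\alpha}(0)\,Y(s).
\end{equation*}
Here the $-cX(s)$ term comes from the transport coefficients $2+l_1'/l_1$ and $1+\delta+l_2'/l_2$ together with the modulation bookkeeping, and the trilinear $|\varepsilon|_{\mathcal{H}^k}X$ term absorbs $\mathcal{M}_\xi,\mathcal{M}_\phi$ and the commutators estimated in the standard way.

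The crucial cancellation is now visible. Multiply the $X$–inequality by $C\alpha^{-2k+1}$ and add it to the $\varepsilon$–inequality. By (\ref{xiest}), $l_1^2(s)|\xi(l_2(s)y,\beta,s)|_{\mathcal{H}^k}^2\leq C'\alpha^{-2k+1}Y(s)$; choosing the constant $C$ in the definition of $\mathcal{E}(s)$ large enough, the $+C'l_1^2|\xi|_{\mathcal{H}^k}^2$ produced by the $\varepsilon$–equation is absorbed by the negative $-C\alpha^{-2k+1}Y(s)$ term from the Laplace part. Plugging back the bootstrap bound $|\varepsilon|_{\mathcal{H}^k}\leq\mathcal{E}(s)^{1/2}$ and $X\leq\mathcal{E}(s)$ converts the cross term into $C\alpha^{-3/2}\mathcal{E}^{3/2}$, yielding the scalar inequality above.

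Finally, since $\mathcal{E}(0)\leq\delta_0\alpha^3$ for $\delta_0$ small, $\frac{C}{\alpha^{3/2}}\mathcal{E}^{1/2}\leq c/2$, so Gr\"onwall gives $\mathcal{E}(s)\leq\mathcal{E}(0)e^{-\kappa s}$ with $\kappa=c/2$, strictly improving the bootstrap and extending $S^*=\infty$; this establishes (\ref{finalest}). The integral bound (\ref{finalest2}) then follows by integrating the modulation pointwise estimate $|\mu_s/\mu|+|\lambda_s/\lambda+1|\leq C\mathcal{E}(s)^{1/2}$ against the exponential decay. The main obstacle I anticipate is verifying that the coercivity claim (\ref{normestimate}) is indeed uniform in $\alpha$ after accounting for the axisymmetry weights and the cross terms $\Lambda_2(\Phi_W)\phi$, $\Lambda_4(\Phi_W)\xi$ (which couple the $\xi$ and $\phi$ equations via the compatibility relation (\ref{exchangeorder})), and making sure the constant $C$ in the definition of $\mathcal{E}$ can be chosen uniformly in $\alpha$ so that the absorption of the cross term $l_1^2|\xi|_{\mathcal{H}^k}^2$ genuinely works; this is where the interplay between the powers $\alpha^{-2k+1}$ and the Hardy inequality in Lemma \ref{hardy_eta} is delicate.
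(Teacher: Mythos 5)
Your overall route is the paper's route: you sum the three energy identities (\ref{energy}), use the coercivity of the Laplace term (\ref{normestimate}) to produce the dissipative $-Y(s)$ contribution, weight $X(s)$ by $\alpha^{-2k+1}$ so that the cross term $l_1^2(s)|\xi(l_2(s)y)|^2_{\mathcal{H}^k}$ from (\ref{varepsilonenergy}) is absorbed through (\ref{xiest}), and close with Gr\"onwall on $\mathcal{E}(s)$; the bootstrap framing is a harmless (indeed more careful) packaging of the same argument, and your final inequality is exactly (\ref{final}) up to the sign bookkeeping of the $(C\mathcal{E}(s)-\alpha^{-2k+1})Y(s)$ term.

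There is, however, one concrete gap: your claimed pointwise modulation bound $|\mu_s/\mu|+|\lambda_s/\lambda+1|+|l_1'/l_1|+|l_2'/l_2|\leq C\mathcal{E}(s)^{1/2}$ is too strong and is not what the constraint $L_{12}(\varepsilon(s))(0)=0$ actually yields. The ODE (\ref{ode}) contains the temperature term $l_1(s)L_{12}(\xi)(0)$, which is controlled by $|\xi|_{\mathcal{H}^0}$; because of the weight mismatch (the $\mathcal{H}^k$ weights behave like $\bar\rho^{-1-3\alpha}$ at the origin and carry $\sin^{-\gamma}(2\beta)$ singularities), $|\xi|_{\mathcal{H}^k}$ is \emph{not} bounded pointwise by $X(s)$, hence not by $\mathcal{E}(s)$, but only by the dissipation via $|\xi|^2_{\mathcal{H}^k}\leq C\alpha^{-2k+1}Y(s)$. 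This is why Proposition \ref{prop_time} gives $|\lambda_s/\lambda+1|\leq \frac{C}{\alpha}|\varepsilon|_{\mathcal{H}^2}+CY^{1/2}(s)$, with a $Y^{1/2}$ term that has no pointwise bound in terms of $\mathcal{E}$. For (\ref{finalest}) this is harmless (the resulting $Y(s)X(s)$ and $Y(s)|\varepsilon|^2_{\mathcal{H}^k}$ terms, which your $X$-inequality silently drops, are absorbed by the negative $\alpha^{-2k+1}Y(s)$ term exactly as in (\ref{final})), but your proof of (\ref{finalest2}) — ``integrate $C\mathcal{E}(s)^{1/2}$ against the exponential decay'' — does not go through as stated: the dissipative contribution must be integrated separately, which requires first extracting $\int_0^\infty Y(s)\,ds\leq \alpha^{2k-1}\mathcal{E}(0)$ by integrating the final differential inequality (this is precisely how the paper concludes (\ref{finalest2})). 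So you should replace the claimed pointwise modulation estimate by (\ref{lambda_mu}) and add the time-integrated dissipation step.
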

\begin{rem}\label{rem_thm}
	By~$(\ref{finalest2})$, we have:
	$$\ln(\lambda(s)e^s)=\ln\lambda(0)+\int_0^s(\frac{\lambda_s}{\lambda}(s')+1)ds',$$
	~$\lambda(s)e^s$ is bounded for any $s\geq 0,$ then we can resolve the time $t$, by~$(\ref{changeofvariable})$ we have:
	$$t(s)=\int_0^s\lambda(s')ds'=\int_0^s(\lambda(s')e^{s'})e^{-s'}ds'\leq C\int_0^s e^{-s'}ds',$$
	then $t|_{s=\infty}$ is finite, $(\ref{finalest2})$ ensures that blowup occurs at finite time.
\end{rem}
The choice of initial value of $\varepsilon_0$ is inherited from \cite{Masmoudi},  $W_0=F+\varepsilon_0$ is compact supported function, and $|\varepsilon_0|_{\mathcal{H}^k}\leq C \alpha^2$. As for~$\theta_0$,
we set $\theta$ as disturbance term, which means we can choose $\theta_0$ small enough such that $X(0)\leq C\alpha^{2k+3}$.

\section{The Dynamic Rescaling Formulation}\label{section3}

Inherited from \cite{Masmoudi}, to ensure the coercivity of the linearization operator $\mathcal{M}_F$ (\ref{elgindiest2})
$$\left<\mathcal{M}_F(\varepsilon),\varepsilon\right>_{\mathcal{H}^k}\geq c|\varepsilon|^2_{\mathcal{H}^k},$$
it's suffices to impose following conditions
\begin{equation}
	\partial_y\varepsilon(0,\beta,s)=0,\quad L_{12}(\varepsilon)(0,\beta,s)=0,
\end{equation}
for any $\beta\in[0,\pi/2],s>0$. following the framework in \cite{Masmoudi}, we  set $\lambda$and$\mu$ satisfying following ODEs:
\begin{equation}\label{ode}
	\begin{aligned}
		&-\alpha\left(\frac{\lambda_{s}}{\lambda}+1\right)+3 L_{12}\left(\frac{\sin (2 \beta)}{1+y} \partial_{\beta} \varepsilon\right)(0)=\sqrt{\alpha} L_{12}(\tilde{L} \varepsilon)(0)+L_{12}\left(N_{2}(\varepsilon)\right)(0)+l_1(s)L_{12}(\xi)(0),\\
		&\frac{\mu_{s}}{\mu}=(2+\delta)\left(\frac{\lambda_{s}}{\lambda}+1\right).
	\end{aligned}
\end{equation}
The difference with the Euler case in \cite{Masmoudi} is the vanishing of swirl and appearance of temperature term $\xi$. And we choose $l_1(s), l_2(s)$ as following
\begin{equation}\label{l_1l_2}
	\begin{aligned}
		l_1(s)&=e^{-s},\\
		l_2(s)&=l_2(0)\exp\left(-(1+\delta)s+\frac{1}{2}\alpha s-(1+\frac{\alpha}{2})\int_0^s(\frac{\lambda_s}{\lambda}(\sigma)+1)d\sigma\right).
	\end{aligned}
\end{equation}
Then we have following bounds of $\lambda(s), \mu(s), l_1(s), l_2(s)$.
\begin{prop}\label{prop_time}
	If $\lambda,\mu$ satisfy ODE (\ref{ode}), and initial value  $\varepsilon_0$ satisfies $$\partial_y\varepsilon_0(0,\beta)=0,\quad L_{12}(\varepsilon_0)(0)=0,$$ then we have \begin{equation}
\partial_y\varepsilon(0,\beta,s)=0,L_{12}(\varepsilon(s))(0)=0,	\end{equation}
for $\beta\in[0,\pi/2],s>0.$ And we also have following estimates:
	\begin{equation}\label{lambda_mu}
		\begin{aligned}
			&\big|\alpha\left(\frac{\lambda_{s}}{\lambda}+1\right)-3 L_{12}\left(\frac{\sin (2 \beta)}{1+y} \partial_{\theta} \varepsilon\right)(0) \big| \leq C\left(\sqrt{\alpha}|\varepsilon(s)|_{\mathcal{H}^{2}}+C\alpha Y(s)^{1/2}\right),\\
			&\left|\frac{\lambda_{s}}{\lambda}+1\right| \leq \frac{C}{\alpha}|\varepsilon(s)|_{\mathcal{H}^{2}}+CY^{1/2}(s).
		\end{aligned}
	\end{equation}
\end{prop}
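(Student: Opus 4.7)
The plan is to derive both conclusions from the ODE (\ref{ode}) itself, which is designed precisely so that the two invariance conditions are propagated, and then to rearrange it to obtain the two quantitative bounds.

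\textbf{Invariance of the two constraints.} I would apply the linear functional $L_{12}(\cdot)(0)$ to the first equation in (\ref{Equationepsilon}). Using the Elgindi decomposition of $\mathcal{M}_F$ from (\ref{decomposition}), the pairing produces on the right exactly the three terms on the right-hand side of (\ref{ode}) plus multiples of $L_{12}(\varepsilon)(0)$ arising from the dilation and scaling terms. Subtracting (\ref{ode}) from the resulting identity yields a homogeneous linear ODE $\frac{d}{ds}L_{12}(\varepsilon)(0) = c(s)L_{12}(\varepsilon)(0)$, and Gr\"onwall preserves the zero initial value. The condition $\partial_y\varepsilon(0,\beta,s) = 0$ is handled analogously: differentiate the $\varepsilon$-equation in $y$ and evaluate at $y = 0$; the relation $\mu_s/\mu = (2+\delta)(\lambda_s/\lambda + 1)$ from the second line of (\ref{ode}) is precisely what is needed to cancel the inhomogeneous contribution from $y\partial_y F$, producing a homogeneous linear ODE in $\partial_y\varepsilon(0,\beta,s)$ which again preserves zero.

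\textbf{The two estimates.} Rearranging (\ref{ode}), the first inequality of (\ref{lambda_mu}) amounts to bounding the right-hand side of (\ref{ode}). The terms $\sqrt{\alpha}|L_{12}(\tilde L\varepsilon)(0)| \leq C\sqrt{\alpha}|\varepsilon|_{\mathcal{H}^2}$ and $|L_{12}(N_2(\varepsilon))(0)| \leq C|\varepsilon|_{\mathcal{H}^2}^2 \leq C\sqrt{\alpha}|\varepsilon|_{\mathcal{H}^2}$ are obtained exactly as in \cite{Masmoudi}, using Cauchy--Schwarz against the $\mathcal{H}^2$ weight together with the a priori smallness assumption $|\varepsilon|_{\mathcal{H}^2}\leq\sqrt{\alpha}$. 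The second inequality of (\ref{lambda_mu}) follows from the first by dividing by $\alpha$ and separately bounding $|L_{12}(\frac{\sin(2\beta)}{1+y}\partial_\beta\varepsilon)(0)| \leq C|\varepsilon|_{\mathcal{H}^2}$ by Cauchy--Schwarz against the $\mathcal{H}^2$ weight, yielding $|\frac{\lambda_s}{\lambda}+1|\leq\frac{C}{\alpha}|\varepsilon|_{\mathcal{H}^2}+CY^{1/2}(s)$.

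\textbf{Main obstacle.} The genuinely new task, and the one I expect to be the technical crux, is the bound $|l_1(s)L_{12}(\xi)(0)| \leq C\alpha Y(s)^{1/2}$. The substitution $\bar y = l_2(s)y$ leaves $d\bar y/\bar y$ invariant, and the further change to the physical variable $\bar\rho = \bar y^{1/\alpha}$ gives $d\bar y/\bar y = \alpha\,d\bar\rho/\bar\rho$; this produces
\begin{equation*}
L_{12}(\xi)(0) = \alpha \int_0^\infty\!\int_0^{\pi/2}\xi(\bar\rho,\beta)\,K(\beta)\,\frac{d\bar\rho\,d\beta}{\bar\rho},
\end{equation*}
with the explicit prefactor $\alpha$ being precisely the source of the $\alpha$ on the right-hand side of (\ref{lambda_mu}). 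Next I would apply Cauchy--Schwarz with a radial weight that interpolates between the $\mathcal{W}_1^k$ weight $\bar\rho^2$ near $\bar\rho = 0$ and the $\mathcal{W}_2^k$ weight $\bar\rho^\eta$ at infinity (which is exactly why two radial norms are needed), reducing the remaining integral to something of the form $(\int\xi^2\sin^{-\eta}(2\beta)(\cdots)\,d\bar\rho\,d\beta)^{1/2}$ with a finite dual weight integral. Since $\xi$ vanishes at $\beta = 0$ and $\beta = \pi/2$ by (\ref{boundary_condition2}), the weighted Hardy inequality of Lemma \ref{hardy_eta} converts this $\xi^2$ factor into $(\partial_\beta\xi)^2$ with the same angular weight. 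The result is absorbed into the low-order components of $|\frac{1}{\bar\rho}\partial_\beta\xi|_{\mathcal{W}_1^k}^2$ and $|\frac{1}{\bar\rho}\partial_\beta\xi|_{\mathcal{W}_2^k}^2$, both of which appear in $Y(s)$; since $l_1(s) = e^{-s} \leq 1$, the desired inequality follows, completing the proof of (\ref{lambda_mu}).
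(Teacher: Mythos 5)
Your overall strategy is the same as the paper's: apply $L_{12}(\cdot)(0)$ to the $\varepsilon$-equation (\ref{Equationepsilon}), use the ODE (\ref{ode}) so that $L_{12}(\varepsilon)(0)$ satisfies a homogeneous equation, quote \cite{Masmoudi} for the inherited Euler terms $\sqrt{\alpha}L_{12}(\tilde L\varepsilon)(0)$ and $L_{12}(N_2(\varepsilon))(0)$, and reduce the whole matter to bounding the new temperature contribution $l_1(s)L_{12}(\xi)(0)$ by a multiple of $Y(s)^{1/2}$. On that last point your execution differs from the paper's and is, if anything, cleaner: the paper bounds $L_{12}(l_1\xi)(0)\le C|\xi|_{\mathcal{H}^0}$ by Cauchy--Schwarz as in (\ref{estL_12}) and then invokes the embedding (\ref{embingding_laplace}), which gives $|\xi|_{\mathcal{H}^0}\le C\alpha^{1/2}Y(s)^{1/2}$, whereas your change of variables $\bar y=\bar\rho^{\alpha}$ extracts the factor $\alpha$ explicitly, and Cauchy--Schwarz with a radial weight interpolating between the $\mathcal{W}_1^k$ weight (near $\infty$) and the $\mathcal{W}_2^k$ weight (near $0$), followed by Lemma \ref{hardy_eta} (legitimate since $\xi$ vanishes at $\beta=0,\pi/2$ by (\ref{boundary_condition2})), lands directly on the lowest-order components of $|\frac{1}{\bar\rho}\partial_\beta\xi|_{\mathcal{W}_1^k}^2+|\frac{1}{\bar\rho}\partial_\beta\xi|_{\mathcal{W}_2^k}^2\le Y(s)$, with a dual-weight integral that is finite because $\eta<1$ and independent of $\alpha$. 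This recovers the stated power $C\alpha Y(s)^{1/2}$ in (\ref{lambda_mu}) more directly than the paper's two-step bound. Your treatment of the remaining Euler terms (including the tacit bootstrap smallness of $|\varepsilon|_{\mathcal{H}^2}$, which should be stated as an a priori assumption) is consistent with the level of detail the paper itself adopts by deferring to \cite{Masmoudi}.

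The one genuine omission is in the propagation of $\partial_y\varepsilon(0,\beta,s)=0$. When you differentiate the $\varepsilon$-equation in $y$ and evaluate at $y=0$, besides the error term $E$ (which, as you correctly observe, is annihilated by $\mu_s/\mu=(2+\delta)(\lambda_s/\lambda+1)$) there is the new forcing $l_1(s)\xi(l_2(s)y,\beta,s)$, whose $y$-derivative at $y=0$ is $l_1(s)l_2(s)\,\partial_{\bar y}\xi(0,\beta,s)$. Unless this vanishes, the resulting ODE for $\partial_y\varepsilon(0,\beta,s)$ is inhomogeneous and the constraint is not preserved, so your claim of a homogeneous linear ODE is unjustified as written. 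This is exactly the new ingredient the paper supplies: since $\xi$ corresponds to $\partial_r\theta$, the symmetry and boundary conditions (\ref{boundary_condition})--(\ref{boundary_condition2}) give $\partial_y\xi(0,\beta,s)=0$ for all $\beta$, after which the argument is indeed a repetition of \cite{Masmoudi}. Add this observation and your proof of the proposition is complete.
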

\begin{proof}

	Compared with the case of Euler equation, the difference is the appearance of the temperature term $\xi$.
	Note that the $\xi$ term is corresponding to $\partial_r\theta$ in $(\rho,\beta)$ coordinate, which satisfies boundary condition $(\ref{boundary_condition})$:$\partial_\rho(\partial_r\theta)(0,\beta,s)=0$, then in self-similar coordinate we have $\partial_y\xi(0,\beta,s)=0$, for any $\beta\in[0,\pi/2]$, the proof of $$\partial_y\varepsilon(0,\beta,s)=0$$ is just a repetition of that in \cite{Masmoudi}.
	
As for $L_{12}(\varepsilon(s))(0)$, by the definition of $L_{12}$ $(\ref{L_12})$, we have:

	\begin{align}\label{estL_12}
		L_{12}(l_1(s)\xi)(0)=&l_1(s)\int_0^\infty\int_0^{\pi/2} \xi(z,\beta)3\sin\beta\cos^2\beta\frac{dz d\beta}{z}\\
		=&l_1(s)\int_0^\infty\int_0^{\pi/2} \left(\xi(z,\beta)\frac{(1+z)^2}{z^2}\right)\left(3\sin\beta\cos^2\beta\cdot \frac{z}{(1+z)^2}\right) dz d\beta\\
		\leq& C|\xi|_{\mathcal{H}^0}.
	\end{align}
	By~$(\ref{embingding_laplace})$ we have~$|\xi|_{\mathcal{H}^0}\leq C\alpha Y(s)$. To keep $L_{12}(\varepsilon)(0)=0,$ we impose $L_{12}$ on the equation of $\varepsilon$ (\ref{Equationepsilon}), follow the treatment of that in \cite{Masmoudi} and the estimate of $L_{12}(l_1(s)\xi)(0)$ (\ref{estL_12}),  we deduce $(\ref{lambda_mu})$.
\end{proof}
Next for $l_1(s), l_2(s)$ chosen as in (\ref{l_1l_2}), we have

\begin{prop}\label{prop_s}
	we set $l_1(s),l_2(s)$ satisfy (\ref{l_1l_2}), then there exist $c,C>0$ that don't depend on $\alpha$ or $\lambda(s)$ such that:
	\begin{equation}\label{l1l2}
		\begin{aligned}
			&\left|l_1(s)\xi(l_2(s)y)\right|_{\mathcal{H}^k}^2\leq\frac{l_1^2(s)}{l_2(s)}\left|\xi\right|_{\mathcal{H}^k}^2,\\
			&cl^{2/\alpha}_2(0)\leq\Big(\frac{\mu l_2}{\lambda^{1+\delta}}\Big)^{\frac{2}{\alpha}}\lambda\leq Cl^{2/\alpha}_{2}(0),
		\end{aligned}
	\end{equation}
for all $s>0.$
\end{prop}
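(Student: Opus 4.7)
The strategy splits along the two inequalities; both are essentially direct change-of-variables and ODE-integration computations, with no substantive analytic obstacle.

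For the first estimate, I would set $\bar y = l_2(s) y$ and exploit the scale invariance of $D_y = y\partial_y$, which gives $D_y^i D_\beta^j [\xi(l_2 y,\beta)] = (D_{\bar y}^i D_\beta^j \xi)(\bar y,\beta)$. The radial weight of $\mathcal{H}^k$ transforms as
$$\frac{(1+y)^4}{y^4}\,dy = \frac{(l_2+\bar y)^4}{\bar y^4}\,\frac{d\bar y}{l_2},$$
so after substitution
$$|l_1(s)\xi(l_2(s)y,\beta)|_{\mathcal{H}^k}^2 = \frac{l_1^2(s)}{l_2(s)}\sum_{i,j}\int |D_{\bar y}^i D_\beta^j \xi|^2 \frac{(l_2+\bar y)^4}{\bar y^4}\,\frac{d\bar y\,d\beta}{\sin^{\star}(2\beta)},$$
where $\sin^{\star}$ stands for the appropriate angular weight. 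The bound then follows from $(l_2(s)+\bar y)^4\le (1+\bar y)^4$, i.e.\ from $l_2(s)\le 1$.

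For the second estimate I would integrate the ODEs (\ref{ode}) and (\ref{l_1l_2}) explicitly. Writing $A(s) = \int_0^s\!\bigl(\lambda_\sigma/\lambda + 1\bigr)d\sigma$,
$$\lambda(s)=\lambda(0)e^{A(s)-s},\qquad \mu(s)=\mu(0)e^{(2+\delta)A(s)},\qquad l_2(s)=l_2(0)e^{-(1+\delta)s+\tfrac{\alpha}{2}s-(1+\tfrac{\alpha}{2})A(s)}.$$
A direct multiplication gives $\mu l_2 = \mu(0)l_2(0)\, e^{(1+\delta-\alpha/2)(A(s)-s)}$ and $\lambda^{1+\delta}=\lambda(0)^{1+\delta}e^{(1+\delta)(A(s)-s)}$, whence
$$\frac{\mu l_2}{\lambda^{1+\delta}}=\frac{\mu(0)l_2(0)}{\lambda(0)^{1+\delta}}\,e^{-(\alpha/2)(A(s)-s)}.$$
Raising to the $2/\alpha$ power and multiplying by $\lambda(s)$ cancels both $A(s)$ and $s$ exactly:
$$\Bigl(\tfrac{\mu l_2}{\lambda^{1+\delta}}\Bigr)^{2/\alpha}\lambda = \lambda(0)\Bigl(\tfrac{\mu(0)l_2(0)}{\lambda(0)^{1+\delta}}\Bigr)^{2/\alpha}.$$
With the standard normalization $\lambda(0)=\mu(0)=1$ this equals $l_2(0)^{2/\alpha}$ identically in $s$, giving the two-sided bound with constants equal to $1$.

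The only (minor) obstacle is the uniform bound $l_2(s)\le 1$ required in the first part. This is not a self-contained fact about $l_2$ alone, since its evolution involves the time integral $A(s)$. The plan is to treat it as a bootstrap: Proposition \ref{prop_time} and the a priori control (\ref{finalest2}) from Theorem \ref{thm2} bound $A(s)$ uniformly in $s$, and since the prefactor $-(1+\delta)+\alpha/2<0$ in the exponent defining $l_2(s)$ is strictly negative for $\alpha$ small, $l_2(s)\le l_2(0)\,e^{C}$, which is $\le 1$ provided $l_2(0)$ is chosen sufficiently small. Once this is in place, both inequalities reduce to the algebraic computations above.
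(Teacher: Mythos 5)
Your proposal is correct and follows essentially the same route as the paper: the first bound is the same change of variables $\bar y=l_2(s)y$ using the scale invariance of $D_y$ and the weight estimate $(l_2(s)+\bar y)^4\le(1+\bar y)^4$, and the second is the same explicit integration of the ODEs (\ref{ode}), (\ref{l_1l_2}), which in fact shows $\bigl(\mu l_2/\lambda^{1+\delta}\bigr)^{2/\alpha}\lambda$ is exactly constant in $s$ and equal to $l_2^{2/\alpha}(0)$ under the normalization $\lambda(0)=\mu(0)=1$. Your extra care about the hypothesis $l_2(s)\le 1$ is justified (the paper dismisses it with ``by the decreasing of $l_2(s)$'', which implicitly uses the boundedness of $\int_0^s(\lambda_\sigma/\lambda+1)\,d\sigma$ and the strictly negative exponent $-(1+\delta)+\alpha/2$), though be aware that fixing it by taking $l_2(0)$ small sits in tension with the remark after Proposition \ref{Mest1}, where $l_2(0)$ is taken large; at worst this costs a harmless constant factor $\max(1,l_2(0))^4$ in the first inequality, which is all that is used later in (\ref{xinorm}).
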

\begin{proof}
	 For~$l_1(s)$, we set $l_1(s)=e^{-s}$, since $\xi,\phi$ are linear with respect to the equation~$(\ref{Equationepsilon})$, and the only term we need consider in energy estimate $(\ref{varepsilonenergy})$ is $l_1(s)\left<\xi(l_2(s)y), \varepsilon\right>_{\mathcal{H}^{k}}$.
	
	Note that the time scaling of $\xi$ and $\varepsilon$ is different only in the radio variable $\rho$, and differential operator $D_\rho=D_{\bar\rho}$ is invariant under time scaling since $\bar\rho=l^{1/\alpha}_2(s)\rho$.
	
	We consider the case that $k=0$ for simplicity, and concern on the integral of radio variable $\bar\rho$:
	
	\begin{equation*}
		\begin{aligned}
			&l_1(s)\int_0^\infty\xi(l_2(s)y)\varepsilon(y)\frac{(1+y)^4}{y^4}dy\\
			&\leq C(\delta_0)l_1(s)^2\int_0^\infty\xi(l_2(s)y)^2\frac{(1+y)^4}{y^4}dy+\delta_0|\varepsilon|^2_{\mathcal{H}^0}.
		\end{aligned}
	\end{equation*}
	For small enough~$\delta_0$, the second term above can be controlled by the negative term in $(\ref{elgindiest})$. For the first term in the above formula, by the decreasing of $l_2(s)$, we have:
	\begin{equation*}
		\begin{aligned}
			&l_1(s)^2\int_0^\infty\xi(l_2(s)y)^2\frac{(1+y)^4}{y^4}dy\\
			\leq&l_1(s)^2\int_0^\infty\xi(\bar{y})^2\frac{(l_2(s)+\bar{y})^4}{\bar{y}^4}\frac{d\bar y}{l_2(s)} \leq\frac{l_1^2(s)}{l_2(s)}\int_0^\infty\xi(\bar{y})^2\frac{(1+\bar{y})^4}{\bar{y}^4}d\bar y,
		\end{aligned}
	\end{equation*}
	where $\bar{y}=l_2(s)y$.
	
	The coefficient of time in the above formula $\frac{l_1^2(s)}{l_2(s)}\leq \frac{e^{(-1+\delta)s}}{l_2(0)}$, is bounded, so $l_1,l_2$ don't tend to infinity, in fact by using $(\ref{xiest})$, we have:
	\begin{equation}\label{xinorm}
		l_1(s)|\xi(l_2(s)y|^2_{\mathcal{H}^{k}}\leq\frac{l_1(s)^2}{l_2(s)}|\xi|^2_{\mathcal{H}^{k}}\leq C\alpha^{-2k+1}\frac{l_1(s)^2}{l_2(s)}Y(s)\leq C'\alpha^{-2k+1}Y(s).
	\end{equation}
	
	In the equations of $\xi$ and $\phi$, we consider the time scaling coefficient Laplace term, by $(\ref{lambda_mu})$ we have:
	\begin{equation*}
		\Big(\frac{\mu l_2}{\lambda^{1+\delta}}\Big)^{\frac{2}{\alpha}}\lambda=\Big(\frac{(\lambda e^s)^{(2+\delta)} l_2}{\lambda^{1+\delta}}\Big)^{\frac{2}{\alpha}}\lambda
		=e^{-s}\left(e^s\lambda\right)^{\frac{2}{\alpha}+1}\left(e^{(1+\delta)s}l_2 \right)^{\frac{2}{\alpha}}.
	\end{equation*}
	
	By $(\ref{ode})$, we have~$\mu(s)=(e^s\lambda(s))^{2+\delta}$, and
	\begin{equation*}
		\Big(\frac{\mu l_2}{\lambda^{1+\delta}}\Big)^{\frac{2}{\alpha}}\lambda=\lambda^{\frac{2}{\alpha}+1}(0)e^{-s}\exp\left((1+\frac{2}{\alpha})
		\int_0^s(\frac{\lambda_s}{\lambda}(\sigma)+1)d\sigma\right)\left(e^{(1+\delta)s}l_2\right)^{\frac{2}{\alpha}}.
	\end{equation*}
	Since $l_2(s)$ have the form:
	\begin{equation}\label{l_2}
		l_2(s)=l_2(0)\exp\left(-(1+\delta)s+\frac{1}{2}\alpha s-(1+\frac{\alpha}{2})\int_0^s(\frac{\lambda_s}{\lambda}(\sigma)+1)d\sigma\right),
	\end{equation}
	we deduce $(\ref{l1l2})$.
\end{proof}
Next, we consider time derivative terms that related to time variable $\mu(s),\lambda(s),l_1(s),l_2(s)$ in energy estimate $(\ref{energy})$

\begin{equation}\label{estimate_self1}
	\begin{aligned}
		&\frac{\mu_s}{\mu}\langle \bar{y}\partial_{\bar{y}}\xi,\xi\rangle_{\mathcal{W}_1^k}-
		\Big(\frac{\lambda_s}{\lambda}+1\Big)\big\langle2\xi+(1+\delta)\bar{y}\partial_{\bar{y}}\xi,\xi\big\rangle_{\mathcal{W}_1^k}+\Big(2+\frac{l'_{1}}{l_1}\Big)\left<\xi,\xi\right>_{\mathcal{W}_1^k}+\Big(1+\delta+\frac{l'_{2}}{l_2}\Big)		\left<\bar{y}\partial_{\bar{y}}\xi,\xi\right>_{\mathcal{W}_1^k},\\
		&\frac{\mu_s}{\mu}\langle \bar{y}\partial_{\bar{y}}\xi,\xi\rangle_{\mathcal{W}_2^k}-
		\Big(\frac{\lambda_s}{\lambda}+1\Big)\big\langle2\xi+(1+\delta)\bar{y}\partial_{\bar{y}}\xi,\xi\big\rangle_{\mathcal{W}_2^k}+\Big(2+\frac{l'_{1}}{l_1}\Big)\left<\xi,\xi\right>_{\mathcal{W}_2^k}+\Big(1+\delta+\frac{l'_{2}}{l_2}\Big)		\left<\bar{y}\partial_{\bar{y}}\xi,\xi\right>_{\mathcal{W}_2^k},\\
		&\frac{\mu_s}{\mu}\langle\bar{y}\partial_{\bar{y}}\phi,\phi\rangle_{\mathcal{W}_3^k} -\Big(\frac{\lambda_s}{\lambda}+1\Big)\big\langle(2\phi+(1+\delta)\bar{y}\partial_{\bar{y}}\phi),\phi\big\rangle_{\mathcal{W}_3^k}+\Big(2+\frac{l'_{1}}{l_1}\Big)\left<\phi,\phi\right>_{\mathcal{W}_3^k}+\Big(1+\delta+\frac{l'_{2}}{l_2}\Big)\left<\bar{y}\partial_{\bar{y}}\phi,\phi\right>_{\mathcal{W}_3^k}.
	\end{aligned}
\end{equation}
\begin{cor}\label{l1l2cor}
	Setting~$l_1(s),l_2(s)$ as the formation in $(\ref{l_1l_2})$, we have following estimates of $(\ref{estimate_1})$:
	\begin{equation}\label{estimate_2}
		\begin{aligned}
			&\frac{\mu_s}{\mu}\langle\frac{1}{\alpha}\bar\rho\partial_{\bar\rho}\xi,\xi\rangle_{\mathcal{W}_1^k}-
			\Big(\frac{\lambda_s}{\lambda}+1\Big)\big\langle2\xi+\frac{(1+\delta)}{\alpha}\bar\rho\partial_{\bar\rho}\xi,\xi\big\rangle_{\mathcal{W}_1^k}+\left(2+\frac{l'_{1}}{l_1}\right)\left<\xi,\xi\right>_{\mathcal{W}_1^k}+\Big(1+\delta+\frac{l'_{2}}{l_2}\Big)\left<\bar{y}\partial_{\bar{y}}\xi,\xi\right>_{\mathcal{W}_1^k}\\
			\geq&\left(\frac{1}{4}-C\alpha^{1/2}\right)\left<\xi,\xi\right>_{\mathcal{W}_1^k}-CY(s)\left<\xi,\xi\right>_{\mathcal{W}_1^k},\\
			&\frac{\mu_s}{\mu}\langle\frac{1}{\alpha}\bar\rho\partial_{\bar\rho}\xi,\xi\rangle_{\mathcal{W}_2^k}-
			\Big(\frac{\lambda_s}{\lambda}+1\Big)\big\langle2\xi+\frac{(1+\delta)}{\alpha}\bar\rho\partial_{\bar\rho}\xi,\xi\big\rangle_{\mathcal{W}_2^k}+\left(2+\frac{l'_{1}}{l_1}\right)\left<\xi,\xi\right>_{\mathcal{W}_2^k}+\Big(1+\delta+\frac{l'_{2}}{l_2}\Big)\left<\bar{y}\partial_{\bar{y}}\xi,\xi\right>_{\mathcal{W}_2^k}\\
			\geq&\left(\frac{1}{4}-C\alpha^{1/2}\right)\left<\xi,\xi\right>_{\mathcal{W}_2^k}-CY(s)\left<\xi,\xi\right>_{\mathcal{W}_2^k},\\
			&\frac{\mu_s}{\mu}\langle\frac{1}{\alpha}\bar\rho\partial_{\bar\rho}\phi,\phi\rangle_{\mathcal{W}_3^k} -\Big(\frac{\lambda_s}{\lambda}+1\Big)\Big\langle(2\phi+\frac{(1+\delta)}{\alpha}\bar\rho\partial_{\bar\rho}\phi),\phi\Big\rangle_{\mathcal{W}_3^k}+\left(2+\frac{l'_{1}}{l_1}\right)\left<\phi,\phi\right>_{\mathcal{W}_3^k}+\Big(1+\delta+\frac{l'_{2}}{l_2}\Big)\left<\bar{y}\partial_{\bar{y}}\phi,\phi\right>_{\mathcal{W}_3^k}\\
			\geq&\left(\frac{1}{4}-C\alpha^{1/2}\right)\left<\phi,\phi\right>_{\mathcal{W}_3^k}-CY(s)\left<\phi,\phi\right>_{\mathcal{W}_3^k}.
		\end{aligned}
	\end{equation}
\end{cor}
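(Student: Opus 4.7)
My plan is to prove all three inequalities in (\ref{estimate_2}) by a single template, which I describe for the first line ($\xi$ in $\mathcal{W}_1^k$); the $\mathcal{W}_2^k$ and $\mathcal{W}_3^k$ cases are structurally identical. The key preliminary step is to substitute the definitions of the rescaling parameters. From the second line of (\ref{ode}) I have $\frac{\mu_s}{\mu}=(2+\delta)(\frac{\lambda_s}{\lambda}+1)$, and from (\ref{l_1l_2}), $l_1(s)=e^{-s}$ gives $2+\frac{l_1'}{l_1}=1$, while differentiating the closed form of $l_2$ yields
\begin{equation*}
1+\delta+\frac{l_2'(s)}{l_2(s)} \;=\; \frac{\alpha}{2}\;-\;\Big(1+\frac{\alpha}{2}\Big)\Big(\frac{\lambda_s}{\lambda}+1\Big).
\end{equation*}
Setting $a:=\frac{\lambda_s}{\lambda}+1$ and gathering coefficients of $\langle\xi,\xi\rangle_{\mathcal{W}_1^k}$ and $\langle\bar y\partial_{\bar y}\xi,\xi\rangle_{\mathcal{W}_1^k}$, routine algebra (all cross terms in $a$ cancel cleanly because $(2+\delta)-(1+\delta)-(1+\frac{\alpha}{2})=-\frac{\alpha}{2}$) collapses the left side of (\ref{estimate_2}) to
\begin{equation*}
A_1 \;=\; (1-2a)\,\langle\xi,\xi\rangle_{\mathcal{W}_1^k} \;+\; \frac{\alpha}{2}(1-a)\,\langle\bar y\partial_{\bar y}\xi,\xi\rangle_{\mathcal{W}_1^k}.
\end{equation*}

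The second step is to rewrite $\langle\bar y\partial_{\bar y}\xi,\xi\rangle_{\mathcal{W}_1^k}$ via integration by parts in $\bar\rho$. Using $\bar y\partial_{\bar y}=\frac{1}{\alpha}D_{\bar\rho}$ and the commutativity of $D_{\bar\rho}$ with $D_\beta$, every summand in (\ref{normdef}) reduces to an integral of the form $\int D_{\bar\rho}\psi\cdot\psi\cdot\bar\rho^{\sigma}\,d\bar\rho\,d\beta$, where $\psi=D_\beta^i D_{\bar\rho}^j\xi$ and the radial exponent is $\sigma=2$. The one-dimensional identity
\begin{equation*}
\int_0^\infty D_{\bar\rho}\psi\cdot\psi\cdot\bar\rho^{\sigma}\,d\bar\rho \;=\; -\frac{1+\sigma}{2}\int_0^\infty \psi^2\bar\rho^{\sigma}\,d\bar\rho,
\end{equation*}
valid once boundary contributions vanish (by density of smooth compactly supported functions in the weighted space), summed over all indices gives $\langle\bar y\partial_{\bar y}\xi,\xi\rangle_{\mathcal{W}_1^k}=-\frac{3}{2\alpha}\langle\xi,\xi\rangle_{\mathcal{W}_1^k}$. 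Substituting this back produces the clean identity $A_1=(\frac{1}{4}-\frac{5}{4}a)\langle\xi,\xi\rangle_{\mathcal{W}_1^k}$.

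The third step applies the bound (\ref{lambda_mu}), namely $|a|\leq \frac{C}{\alpha}|\varepsilon(s)|_{\mathcal{H}^2}+CY^{1/2}(s)$. Together with the bootstrap $|\varepsilon(s)|_{\mathcal{H}^2}\leq C\alpha^{3/2}$ (consistent with $\mathcal{E}(s)\lesssim \alpha^3$ from Theorem \ref{thm2}) this gives $|a|\leq C\alpha^{1/2}+CY^{1/2}(s)$, and splitting the $Y^{1/2}\langle\xi,\xi\rangle$ contribution via Young's inequality (with coefficient $\alpha^{1/2}$) into a piece of the form $\alpha^{1/2}\langle\xi,\xi\rangle$ absorbed into the first negative term and a piece of the form $Y(s)\langle\xi,\xi\rangle$ recovers the stated lower bound. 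The $\mathcal{W}_2^k$ case is identical with $\sigma=\eta=99/100$, which only improves the leading constant to $1-\frac{1+\eta}{4}>\frac{1}{2}$; the $\mathcal{W}_3^k$ case is again identical with $\sigma=2$, and the additional summation involving $\partial_\beta\phi$ in (\ref{normdef}) is handled in exactly the same manner since the integration by parts acts purely in $\bar\rho$, so the boundary condition (\ref{boundary_condition3}) plays no role at this step.

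The main technical care required is to justify the vanishing of boundary contributions at $\bar\rho=0$ and $\bar\rho=\infty$ uniformly across every weighted summand of the three norms, and to arrange the constants so that the residual $Y^{1/2}$ term produced by (\ref{lambda_mu}) is absorbed cleanly into the stated $-CY(s)\langle\,\cdot\,,\cdot\,\rangle$ form rather than into a fractional-power version; these are the only places where the smallness of $\alpha$ and the bootstrap control on $\varepsilon$ and $Y$ must be used in concert.
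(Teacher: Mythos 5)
Your proposal is correct and follows essentially the same route as the paper: substitute $\frac{\mu_s}{\mu}=(2+\delta)(\frac{\lambda_s}{\lambda}+1)$, $2+\frac{l_1'}{l_1}=1$ and $1+\delta+\frac{l_2'}{l_2}=\frac{\alpha}{2}-(1+\frac{\alpha}{2})(\frac{\lambda_s}{\lambda}+1)$, use the radial integration by parts $\left<\bar y\partial_{\bar y}\cdot,\cdot\right>_{\mathcal{W}_i^k}=-\frac{C_i}{2\alpha}\left<\cdot,\cdot\right>_{\mathcal{W}_i^k}$ (with $C_1=C_3=3$, $C_2=1+\eta$), collapse to $\bigl[\frac14-\frac54(\frac{\lambda_s}{\lambda}+1)\bigr]\left<\xi,\xi\right>$, and finish with the bound (\ref{lambda_mu}) together with the bootstrap smallness of $|\varepsilon|_{\mathcal{H}^2}$. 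Your explicit Young-inequality step converting the $Y^{1/2}(s)$ contribution into the stated $Y(s)$ term is in fact slightly more careful than the paper, which passes over this point silently.
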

Recall that $\bar{y}=\bar\rho^\alpha, \bar{y}\partial_{\bar{y}}=\frac{1}{\alpha}\bar\rho\partial_{\bar{\rho}}$, using integration by parts, we have
$$\left<\bar{y}\partial_{\bar{y}}\xi,\xi\right>_{\mathcal{W}_i^k}=-\frac{C_i}{2\alpha}\left<\xi,\xi\right>_{\mathcal{W}_i^k},$$ with $C_1=C_3=3, C_2=1+\eta .$

We consider $\mathcal{W}_1^k$ norm for example, using $\frac{\mu_s}{\mu}=(2+\delta)(\frac{\lambda_s}{\lambda}+1),$ we have 
\begin{equation}
	\begin{aligned}
		&\frac{\mu_s}{\mu}\langle\frac{1}{\alpha}\bar\rho\partial_{\bar\rho}\xi,\xi\rangle_{\mathcal{W}_1^k}-
		\Big(\frac{\lambda_s}{\lambda}+1\Big)\big\langle2\xi+\frac{(1+\delta)}{\alpha}\bar\rho\partial_{\bar\rho}\xi,\xi\big\rangle_{\mathcal{W}_1^k}+\left(2+\frac{l'_{1}}{l_1}\right)\left<\xi,\xi\right>_{\mathcal{W}_1^k}+\Big(1+\delta+\frac{l'_{2}}{l_2}\Big)\left<\bar{y}\partial_{\bar{y}}\xi,\xi\right>_{\mathcal{W}_1^k}\\
		=&\left[2+\frac{l'_{1}}{l_1}-2\left(\frac{\lambda_s}{\lambda}+1\right)\right]\left<\xi,\xi\right>_{\mathcal{W}_1^k}+\frac{1}{\alpha}\left[\left(\frac{\lambda_s}{\lambda}+1\right)+\left(1+\delta+\frac{l'_2}{l_2}\right)\right]\left<\bar\rho\partial_{\bar\rho}\xi,\xi\right>_{\mathcal{W}_1^k}.
	\end{aligned}
\end{equation}
 Then using integrating by parts to $(\ref{estimate_self1})$, we have

\begin{equation}\label{estimate_1}
	\begin{aligned}
		&\left[2+\frac{l'_{1}}{l_1}-2\left(\frac{\lambda_s}{\lambda}+1\right)\right]\left<\xi,\xi\right>_{\mathcal{W}_1^k}+\frac{1}{\alpha}\left[\left(\frac{\lambda_s}{\lambda_s}+1\right)+\left(1+\delta+\frac{l'_2}{l_2}\right)\right]\left<\bar\rho\partial_{\bar\rho}\xi,\xi\right>_{\mathcal{W}_1^k}\\
		=&\left[1-2\left(\frac{\lambda_s}{\lambda}+1\right)-\frac{3}{2\alpha}\left(\frac{\alpha}{2}-\frac{\alpha}{2}\left(\frac{\lambda_s}{\lambda}+1\right)\right)\right]\left<\xi,\xi\right>_{\mathcal{W}_1^k}\\
		=&\left[\frac{1}{4}-\frac{5}{4}\left(\frac{\lambda_s}{\lambda}+1\right)\right]\left<\xi,\xi\right>_{\mathcal{W}_1^k}\\
		\geq&\left(\frac{1}{4}-\frac{C}{\alpha}|\varepsilon|_{\mathcal{H}^2}\right)\left<\xi,\xi\right>_{\mathcal{W}_1^k}-CY(s)\left<\xi,\xi\right>_{\mathcal{W}_1^k}.
	\end{aligned}
\end{equation}
\begin{rem}
If $|\varepsilon|_{\mathcal{H}^k}\leq C\alpha^{3/2}$, by the smallness of $\alpha$, the first term on the right-hand of the inequalities are positive.
\end{rem}
\section{Gradient estimates of temperature equation}\label{chap:tabfig}
Our goal in this section is to derive estimates for transport term $\mathcal{M}_\xi(\xi,\phi),\mathcal{M}_\phi(\xi,\phi)$, and Laplace term of temperature equation in energy estimate~$(\ref{energy})$. First we prove the Laplace term is coercive under $\mathcal{W}_1^k, \mathcal{W}_2^k, \mathcal{W}_3^k$ norm. Then in (\ref{energy}), we add three inequalities up, Laplace terms correspond to a negative term. By the choice of the weights of $\mathcal{W}_1^k, \mathcal{W}_2^k, \mathcal{W}_3^k$ norms,  the transport term $\mathcal{M}_\xi(\xi,\phi),\mathcal{M}_\phi(\xi,\phi)$ is a high order terms in energy estimates (\ref{energy}).
\subsection{Estimates of Laplace term}\label{est_laplace}
Consider the Laplace term in energy estimate $(\ref{energy})$, omit the coefficient coming from time scaling, we proceed to establish following estimates:
\begin{prop}\label{laplaceprop}
	For all $\xi\in\mathcal {W}_1^k\bigcap\mathcal {W}_2^k,\phi\in\mathcal {W}_3^k,\quad k\geq 1$,
	we have that
	\begin{equation}\label{laplaceest}
		\begin{aligned}
			\left<(\Delta-\frac{1}{\bar\rho^2\cos^2\beta)})\xi,\xi\right>_{\mathcal{W}_1^k}\leq&-C(|\frac{1}{\bar\rho}\partial_\beta\xi|^2_{\mathcal W^k_1}+|\frac{1}{\bar\rho}D_{\bar\rho}\xi|^2_{\mathcal W^k_1}),\\
			\left<(\Delta-\frac{1}{\bar\rho^2\cos^2\beta})\xi,\xi\right>_{\mathcal{W}_2^k}\leq&-C(|\frac{1}{\bar\rho}\partial_\beta\xi|^2_{\mathcal W^k_2}+|\frac{1}{\bar\rho}D_{\bar \rho}\xi|^2_{\mathcal W^k_2}),\\
			\left<\Delta\phi,\phi\right>_{\mathcal{W}_3^k}\leq&-C(|\frac{1}{\bar\rho}\partial_\beta\phi|^2_{\mathcal W^k_3}+|\frac{1}{\bar\rho}D_{\bar \rho}\phi|^2_{\mathcal W^k_3}).
		\end{aligned}
	\end{equation}
	
\end{prop}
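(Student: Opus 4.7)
The plan is to exploit the factorization $\bar\rho^{2}\Delta=\tilde\Delta = D_{\bar\rho}^{2}+D_{\bar\rho}+L_\beta$ from (\ref{D_rho}), where $L_\beta f := \frac{1}{\cos\beta}\partial_\beta(\cos\beta\,\partial_\beta f)$, and to perform a term-by-term integration by parts for each pair $(i,j)$ in the definition of $\langle\cdot,\cdot\rangle_{\mathcal{W}_\ell^k}$. The structural fact that makes this work cleanly is that $D_{\bar\rho}$ commutes with both $\tilde\Delta$ and $\partial_\beta$, while it only shifts the $\bar\rho^{-2}$ prefactor according to $D_{\bar\rho}(\bar\rho^{-2}g)=\bar\rho^{-2}(D_{\bar\rho}-2)g$. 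Consequently
\[
D_{\bar\rho}^{j}D_\beta^{i}\bigl(\bar\rho^{-2}\tilde\Delta\xi\bigr)=\bar\rho^{-2}\tilde\Delta\bigl(D_{\bar\rho}^{j}D_\beta^{i}\xi\bigr)+\text{(strictly lower-order commutators)},
\]
the only nontrivial commutator being $[D_\beta^{i},L_\beta]$, which is first order in $\beta$ and can be treated as a perturbation.

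Writing $g_{i,j}=D_{\bar\rho}^{j}D_\beta^{i}\xi$ (resp.\ $D_{\bar\rho}^{j}D_\beta^{i}\phi$), the principal contribution to $\langle\Delta\xi,\xi\rangle_{\mathcal{W}_\ell^k}$ reduces to a sum of integrals $\int\bar\rho^{-2}\tilde\Delta g_{i,j}\cdot g_{i,j}\,w_{i,j}\,d\beta\,d\bar\rho$ with $w_{i,j}$ the weight attached to the $(i,j)$-slot. Using $D_{\bar\rho}^{2}+D_{\bar\rho}=D_{\bar\rho}(D_{\bar\rho}+1)$, integration by parts in $\bar\rho$ converts the radial piece into $-\int(D_{\bar\rho}g_{i,j})^{2}\,w_{i,j}/\bar\rho^{2}\,d\beta\,d\bar\rho$ plus weight-derivative corrections, while the self-adjointness of $L_\beta$ (with the appropriate $\cos\beta$ measure) yields $-\int(\partial_\beta g_{i,j})^{2}\,w_{i,j}/\bar\rho^{2}\,d\beta\,d\bar\rho$. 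Summing over $(i,j)$ gives the required coercive quantity $-c\bigl(|\tfrac{1}{\bar\rho}\partial_\beta\xi|_{\mathcal{W}_\ell^{k}}^{2}+|\tfrac{1}{\bar\rho}D_{\bar\rho}\xi|_{\mathcal{W}_\ell^{k}}^{2}\bigr)$ claimed in (\ref{laplaceest}).

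The $\beta$-boundary contributions are where the various symmetry conditions enter. At $\beta=0$ the weight $\sin^{-\eta}(2\beta)$ blows up, but $\xi|_{\beta=0}=0$ (from the odd symmetry of $\partial_r\theta$ in $x_3$, see (\ref{boundary_condition})) kills the endpoint contribution, aided by a weighted Hardy bound of the type in Lemma \ref{hardy_eta}. For $\phi$, the analogue $\partial_\beta\phi|_{\beta=0,\pi/2}=0$ from (\ref{boundary_condition3}) plays the same role, and the $\mathcal{W}_3^k$ weight $\cos^{2-\eta}\beta$ is tailored so that no problematic endpoint at $\beta=\pi/2$ appears. For $\xi$ at $\beta=\pi/2$ the extra potential $-\frac{1}{\bar\rho^{2}\cos^{2}\beta}\xi$ is exactly what cancels the singular boundary term produced by $L_\beta$ where $\cos\beta\to 0$. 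The top-order index $(0,k)$ is handled separately: the weight changes from $\sin^{-\eta}(2\beta)$ to $\sin^{2-\eta}(2\beta)$ to compensate for the absence of a $D_\beta$ factor in $g_{0,k}$, and the two extra powers of $\sin(2\beta)$ are precisely what makes the $\beta$-integration by parts close in this slot.

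The main obstacle I anticipate is making sure the commutator and weight-derivative error terms are genuinely absorbed by the main coercive quantities rather than being borderline. Differentiating the weights $\sin^{\pm\eta}(2\beta)$ and $\cos^{2-\eta}\beta$ produces $\cot(2\beta)$ or $\tan\beta$ factors that are marginally singular at the endpoints, and $[D_\beta^{i},L_\beta]$ introduces additional $\beta$-singular coefficients at each level of differentiation. Controlling these will require weighted Hardy inequalities with exponents tuned to the gap between $\eta=99/100$ and $\gamma=1+\alpha/10$: the strict inequality $\gamma>\eta$ is precisely what gives the room needed to dominate the error terms by the positive quadratic forms in $\partial_\beta g_{i,j}$ and $D_{\bar\rho}g_{i,j}$, with an $\alpha$-independent constant, and this is the reason for the specific exponents built into the definition (\ref{normdef}) of $\mathcal{W}_\ell^k$.
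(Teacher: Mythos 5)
Your overall strategy coincides with the paper's: split $\Delta-\frac{1}{\bar\rho^{2}\cos^{2}\beta}$ into a radial part and an angular part, integrate by parts slot-by-slot against the weights in (\ref{normdef}), treat the commutators coming from the $\bar\rho^{-2}$ prefactor as lower-order terms absorbed into an equivalent norm, use the boundary conditions (\ref{boundary_condition}), (\ref{boundary_condition3}) together with weighted Hardy inequalities, and handle the top slot $(0,k)$ separately. So the architecture is right.

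The gap is that you defer exactly the step that carries the proof, and the mechanism you propose for closing it is not the correct one. The weight-derivative term produced by the angular integration by parts against $\sin^{-\eta}(2\beta)$ is not a perturbative correction: it is $+2\eta(1+\eta)\int f^{2}\sin^{-\eta-2}(2\beta)$, of the same size as the main negative term, and it is absorbed only because Lemma \ref{hardy_eta} holds with the sharp constant $(1+\eta)^{-2}$, so that the net factor is $\frac{2\eta}{1+\eta}<1$; the room comes from $\eta<1$, not from the gap $\gamma>\eta$ — indeed $\gamma$ does not appear in the $\mathcal{W}_{\ell}^{k}$ weights at all (it belongs to $\mathcal{H}^{k}$), so the ``room between $\gamma$ and $\eta$'' you invoke cannot close these borderline estimates. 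Two further absorptions that your slot-by-slot picture misses are essential to the sign bookkeeping: (i) the $(0,k)$ slot with weight $\sin^{2-\eta}(2\beta)$ leaves a positive remainder of size $(1-\eta)\int (D_{\bar\rho}^{k}\xi)^{2}\sin^{-\eta}(2\beta)$ which is not closable within that slot and is instead absorbed by the negative $(D_{\bar\rho}g)^{2}\sin^{-\eta}(2\beta)$ contribution of the $(0,k-1)$ slot with $g=D_{\bar\rho}^{k-1}\xi$, $f=D_{\bar\rho}g$; and (ii) the radial integration by parts in $\mathcal{W}_{2}^{k}$ (weight $\bar\rho^{\eta}$) produces a positive term $\frac{(2-\eta)(1-\eta)}{2}\int f^{2}\bar\rho^{\eta-2}$ which is absorbed by the angular negative terms, not by the radial ones. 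Finally, the potential $-\frac{1}{\bar\rho^{2}\cos^{2}\beta}$ is used in the bulk, not to cancel a boundary term: it turns the would-be positive contribution $\frac{2(1+\eta)\sin^{2}\beta}{\sin^{\eta+2}(2\beta)}f^{2}$ near $\beta=\pi/2$ into the negative $-\frac{2(1-\eta)\sin^{2}\beta}{\sin^{\eta+2}(2\beta)}f^{2}$. Without these quantitative checks the claimed coercivity with an $\alpha$-independent constant is not established.
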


To prove this proposition, we need following Hardy inequality with different weights
\begin{lemma}\label{hardy_eta}
	If $ f \in H^1([0,\pi/2])$ and~$-1<\eta<1$, satisfies $ f (0)= f (\pi/2)=0$, we have that
	\begin{equation}
		\int_0^{\pi/2}\frac{ f ^2}{\sin^{\eta+2}(2\beta)}d\beta\leq\frac{1}{(\eta+1)^2}\int_0^{\pi/2}\frac{(\partial_\beta f )^2}{\sin^\eta (2\beta)}d\beta.
	\end{equation}
\end{lemma}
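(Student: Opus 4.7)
The plan is to reduce the stated weighted Hardy inequality on $(0,\pi/2)$ to a weighted Poincar\'e-type bound on $\mathbb{R}$, and then close that bound by a Bakry--\'Emery style completion of the square.

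First I would perform the change of variables $u = \log\tan\beta$, which maps $(0,\pi/2)$ diffeomorphically onto $\mathbb{R}$, sends the endpoints $\beta=0, \pi/2$ to $u = \mp\infty$, and satisfies the clean identities $\sin(2\beta) = 1/\cosh(u)$, $d\beta = \frac{du}{2\cosh(u)}$, and $f'(\beta) = 2\cosh(u)F_u$, where $F(u) := f(\beta(u))$. A short Jacobian computation then converts the two sides of the lemma into constant multiples of $\int_{\mathbb{R}} F^2 \cosh^{\eta+1}(u)\,du$ and $\int_{\mathbb{R}} F_u^2 \cosh^{\eta+1}(u)\,du$, so the claim is equivalent to the one-line weighted Poincar\'e inequality
\begin{equation*}
\int_{-\infty}^{\infty} F^2 \cosh^{\eta+1}(u)\,du \leq \frac{4}{(\eta+1)^2}\int_{-\infty}^{\infty} F_u^2 \cosh^{\eta+1}(u)\,du,
\end{equation*}
with $F(\pm\infty)=0$ inherited from $f(0)=f(\pi/2)=0$.

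Next I would apply the standard completion-of-the-square argument: starting from $0 \leq \int_{\mathbb{R}} (F_u + \phi'(u) F)^2 e^{2\phi(u)}\,du$ for a smooth weight $\phi$, expanding the square and integrating the cross term by parts (boundary terms justified below) yields
\begin{equation*}
\int_{\mathbb{R}} F_u^2 e^{2\phi}\,du \geq \int_{\mathbb{R}} F^2 \bigl[\phi''(u) + (\phi'(u))^2\bigr] e^{2\phi(u)}\,du.
\end{equation*}
The natural choice $\phi(u) = \frac{\eta+1}{2}\log\cosh(u)$ matches the weight $e^{2\phi} = \cosh^{\eta+1}(u)$, and a short direct calculation gives
\begin{equation*}
\phi''(u) + (\phi'(u))^2 = \frac{(\eta+1)^2}{4} + \frac{(\eta+1)(1-\eta)}{4\cosh^2(u)} \;\geq\; \frac{(\eta+1)^2}{4},
\end{equation*}
where the non-negativity of the second summand uses \emph{precisely} the hypothesis $-1<\eta<1$. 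This delivers the constant $\frac{4}{(\eta+1)^2}$ in the reduced inequality, and hence $\frac{1}{(\eta+1)^2}$ back in the $\beta$ variable.

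The step I expect to be the main technical obstacle is the justification of vanishing boundary contributions in the integration by parts, since $\cosh^{\eta+1}(u)\to\infty$ as $u\to\pm\infty$ (equivalently, the original weight blows up at $\beta=0,\pi/2$). My plan is to first prove the inequality for $F \in C_c^{\infty}(\mathbb{R})$, equivalently for $f$ smooth and supported away from $\beta = 0, \pi/2$, where the IBP is trivial, and then extend to the general case by a density argument: truncate $f$ near the endpoints, mollify, apply the inequality to the approximants, and pass to the limit using lower semicontinuity of the two weighted $L^2$ norms together with the assumed finiteness of the right-hand side and the boundary vanishing of $f$.
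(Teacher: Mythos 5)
Your proposal is correct, and it reaches the stated constant $\frac{1}{(\eta+1)^2}$, but by a genuinely different route than the paper. The paper works directly in the $\beta$ variable: it writes $\sin^{-\eta-2}(2\beta)\,d\beta = \sin^{-\eta}(2\beta)\,d\bigl(-\tfrac12\cot(2\beta)\bigr)$, integrates by parts once, uses $\cos^2(2\beta)=1-\sin^2(2\beta)$, and absorbs the resulting terms via Cauchy--Schwarz with the tuned parameter $\varepsilon=\tfrac{1+\eta}{2}>\eta$; the restriction $\eta<1$ enters exactly there. Your route instead transplants the inequality to $\mathbb{R}$ via $u=\log\tan\beta$ (your Jacobian bookkeeping is right: the two sides become $\tfrac12\int F^2\cosh^{\eta+1}$ and $2\int F_u^2\cosh^{\eta+1}$, so the target constant is $\tfrac{4}{(\eta+1)^2}$) and then closes it by the ground-state substitution $0\le\int(F_u+\phi'F)^2e^{2\phi}$ with $\phi=\tfrac{\eta+1}{2}\log\cosh u$; your computation $\phi''+(\phi')^2=\tfrac{(\eta+1)^2}{4}+\tfrac{(\eta+1)(1-\eta)}{4}\operatorname{sech}^2 u$ is correct and makes the role of $-1<\eta<1$ (and the discarded positive $\operatorname{sech}^2$ term, hence non-sharpness for interior-supported functions) completely transparent. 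What each approach buys: the paper's argument is shorter, stays in the original coordinates, and generalizes with minimal change to the other weights used in Lemma 4.3; yours cleanly exposes the structure of the inequality as a weighted Poincar\'e bound and isolates the only delicate point, the boundary contributions, in a single density step. On that step, your plan (prove it for $f$ supported away from $\beta=0,\pi/2$, then truncate, mollify, and pass to the limit using Fatou on the left and the finiteness of the weighted right-hand side) does close: the truncation error is controlled because finiteness of $\int(\partial_\beta f)^2\sin^{-\eta}(2\beta)\,d\beta$ together with $f(0)=f(\pi/2)=0$ gives $|f(\beta)|\lesssim\beta^{(1+\eta)/2}o(1)$ near $0$ (and symmetrically at $\pi/2$), so the cutoff term $\int(\chi_\epsilon')^2f^2\sin^{-\eta}(2\beta)\,d\beta$ tends to $0$; note the paper's own integration by parts silently discards the same boundary terms, so this is not an extra burden specific to your method.
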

\begin{proof}
	By integration by parts, we have
	\begin{equation*}
		\begin{aligned}
			&\int_0^{\pi/2}\frac{ f ^2}{\sin^{\eta+2}(2\beta)}d\beta=\int_0^{\pi/2}\frac{ f ^2}{\sin^\eta(2\beta)} d\left(-\frac{1}{2}\frac{\cos(2\beta)}{\sin(2\beta)}\right)\\
			=&\int_0^{\pi/2}\partial_\beta f \cdot f \frac{\cos(2\beta)}{\sin^{\eta+1}(2\beta)}d\beta-\eta\int_0^{\pi/2} f ^2\frac{\cos^2 (2\beta)}{\sin^{\eta+2}(2\beta)}d\beta,
		\end{aligned}
	\end{equation*}
	then
	\begin{equation*}
		\begin{aligned}
			&(1+\eta)\int_0^{\pi/2}\frac{ f ^2}{\sin^{\eta+2}(2\beta)}d\beta=\int_0^{\pi/2}\partial_\beta f \cdot f \frac{\cos(2\beta)}{\sin^{\eta+1}(2\beta)}d\beta+\eta\int_0^{\pi/2} f ^2\frac{d\beta}{\sin^\eta (2\beta)}\\
			\leq&\varepsilon\int_0^{\pi/2} f ^2\frac{\cos^2 (2\beta)}{\sin^{\eta+2}(2\beta)} d\beta+\frac{1}{4\varepsilon}\int_0^{\pi/2}(\partial_\beta f )^2\frac{1}{\sin^\eta (2\beta)}d\beta+\eta\int_0^{\pi/2} f ^2\frac{d\beta}{\sin^{\eta}(2\beta)}.
		\end{aligned}
	\end{equation*}
	Setting $\varepsilon=\frac{1+\eta}{2}>\eta$, we have that
	\begin{equation*}
		\begin{aligned}
			(1+\eta)\int_0^{\pi/2}\frac{ f ^2}{\sin^{\eta+2}(2\beta)}d\beta\leq&\frac{1+\eta}{2}\int_0^{\pi/2} f ^2\frac{\cos^2 (2\beta)}{\sin^{\eta+2}(2\beta)} d\beta+\frac{1}{2(1+\eta)}\int_0^{\pi/2}(\partial_\beta f )^2\frac{1}{\sin^\eta (2\beta)}d\beta\\
			\leq& \frac{1+\eta}{2}\int_0^{\pi/2} f ^2\frac{d\beta}{\sin^{\eta+2}(2\beta)} +\frac{1}{2(1+\eta)}\int_0^{\pi/2}(\partial_\beta f )^2\frac{1}{\sin^\eta (2\beta)}d\beta.
		\end{aligned}
	\end{equation*}
\end{proof}

\begin{lemma}\label{hardy}
	For all $ f \in H^1([0,\pi/2]), \eta<1$, we have that
	\begin{equation*}
		\begin{aligned}
			\int_0^{\pi/2} f ^2\cos^{-\eta}\beta d\beta\leq & \frac{4}{(1-\eta)^2}\int_0^{\pi/2}(\partial_\beta f )^2\cos^{2-\eta}\beta d\beta+\\
			&+\left(\frac{2}{1-\eta}+1\right)\int_0^{\pi/2} f ^2\cos^{2-\eta}\beta d\beta.
		\end{aligned}
	\end{equation*}
\end{lemma}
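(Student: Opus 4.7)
The plan is to mirror the integration-by-parts proof of Lemma \ref{hardy_eta}, but adapted to the weight $\cos\beta$ (which vanishes only at $\beta=\pi/2$) in place of $\sin(2\beta)$. The key ingredient is an identity produced by differentiating $\sin\beta\,\cos^{1-\eta}\beta$: a direct computation gives
\[
\frac{d}{d\beta}\bigl(\sin\beta\,\cos^{1-\eta}\beta\bigr) \;=\; \cos^{2-\eta}\beta \;-\; (1-\eta)\sin^2\beta\,\cos^{-\eta}\beta,
\]
and using $\sin^2\beta = 1-\cos^2\beta$ this rearranges to
\[
(1-\eta)\cos^{-\eta}\beta \;=\; (2-\eta)\cos^{2-\eta}\beta \;-\; \frac{d}{d\beta}\bigl(\sin\beta\,\cos^{1-\eta}\beta\bigr).
\]

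Multiplying by $f^2$ and integrating over $[0,\pi/2]$ reduces the task to estimating a single derivative integral. The next step is an integration by parts: the boundary contribution $\bigl[f^2\sin\beta\,\cos^{1-\eta}\beta\bigr]_0^{\pi/2}$ vanishes because $\sin 0 = 0$ and because $\cos^{1-\eta}\beta\to 0$ as $\beta\to\pi/2$, which is precisely where the hypothesis $\eta<1$ enters. The resulting cross term $2\int_0^{\pi/2} f\,\partial_\beta f\,\sin\beta\,\cos^{1-\eta}\beta\,d\beta$ I would then control by weighted Young's inequality $2ab\le\varepsilon a^2+\varepsilon^{-1}b^2$ applied with $a = f\sin\beta\,\cos^{-\eta/2}\beta$ and $b = \partial_\beta f\,\cos^{(2-\eta)/2}\beta$, producing
\[
\varepsilon\!\int_0^{\pi/2}\! f^2\sin^2\beta\,\cos^{-\eta}\beta\,d\beta \;+\; \frac{1}{\varepsilon}\!\int_0^{\pi/2}\!(\partial_\beta f)^2\cos^{2-\eta}\beta\,d\beta.
\]

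To recover the sharp constants $\frac{4}{(1-\eta)^2}$ and $\frac{2}{1-\eta}+1$ stated in the lemma, the essential trick is to avoid the crude estimate $\sin^2\beta \le 1$ and instead expand $\sin^2\beta = 1-\cos^2\beta$, splitting $\int f^2 \sin^2\beta \cos^{-\eta}\beta$ as $\int f^2 \cos^{-\eta}\beta - \int f^2 \cos^{2-\eta}\beta$. Choosing $\varepsilon = (1-\eta)/2$ to absorb the $\int f^2\cos^{-\eta}\beta$ contribution back into the left-hand side, and then dividing through by $(1-\eta)/2$, produces exactly the stated bound. The main obstacle is really bookkeeping: identifying the correct antiderivative pair $\sin\beta\,\cos^{1-\eta}\beta$ so that differentiation generates precisely the two weights appearing in the lemma, and tracking constants through the Young step so that nothing is wasted. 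If one prefers to avoid invoking $H^1$ arguments directly, the integration by parts can first be carried out for $f\in C^\infty([0,\pi/2])$ and then extended by density.
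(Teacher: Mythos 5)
Your proposal is correct and follows essentially the same route as the paper: the identity you get by differentiating $\sin\beta\,\cos^{1-\eta}\beta$ is exactly the paper's integration by parts against $d(\tan\beta)$, and the subsequent Young inequality with the splitting $\sin^2\beta=1-\cos^2\beta$ and the choice $\varepsilon=\frac{1-\eta}{2}$ reproduces the paper's computation and its constants verbatim. Your explicit check that the boundary term vanishes (using $1-\eta>0$ at $\beta=\pi/2$) is a point the paper leaves implicit, but nothing in the argument differs in substance.
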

\begin{proof}
	Since $\eta<1$, we use integration by parts
	\begin{equation*}
	\begin{aligned}
		&\int_0^{\pi/2} f ^2\cos^{-\eta}\beta d\beta\\
		=&\int_0^{\pi/2} f ^2\cos^{2-\eta}\beta d\big(\frac{\sin\beta}{\cos\beta}\big)\\
		=&-\int_0^{\pi/2}2\partial_\beta f \cdot f \sin\beta\cos^{1-\eta}\beta d\beta+(2-\eta)\int_0^{\pi/2} f ^2\sin^2\beta\cos^{-\eta}\beta d\beta\\
		=&-\int_0^{\pi/2}2\partial_\beta f \cdot f \sin\beta\cos^{1-\eta}\beta d\beta+(2-\eta)\int_0^{\pi/2} f ^2\cos^{-\eta}\beta d\beta-(2-\eta)\int_0^{\pi/2} f ^2\cos^{2-\eta}\beta d\beta.
	\end{aligned}
\end{equation*}
	we have
	\begin{equation*}
		\begin{aligned}
			(1-\eta)\int_0^{\pi/2} f ^2\cos^{-\eta}\beta d\beta\leq &\varepsilon\int_0^{\pi/2} f ^2\sin^2\beta\cos^{-\eta}\beta d\beta+\frac{1}{\varepsilon}\int_0^{\pi/2}(\partial_\beta f )^2\cos^{2-\eta}\beta d\beta\\
			&+(2-\eta)\int_0^{\pi/2} f ^2\cos^{2-\eta}\beta d\beta\\
			=&\varepsilon\int_0^{\pi/2} f ^2\cos^{-\eta}\beta d\beta+\frac{1}{\varepsilon}\int_0^{\pi/2}(\partial_\beta f )^2\cos^{2-\eta}\beta d\beta\\
			&+(2-\eta-\varepsilon)\int_0^{\pi/2} f ^2\cos^{2-\eta}\beta d\beta.
		\end{aligned}
	\end{equation*}
	Setting $\varepsilon=\frac{1-\eta}{2}$, we obtain the inequality.
	\begin{rem}
		Replace~$\cos^{-\eta}\beta$ by~$\sin^{-\eta}(2\beta)$, with the same method, we also have:
		\begin{equation}
			\begin{aligned}
				\int_0^{\pi/2} f ^2\sin^{-\eta}(2\beta) d\beta\leq & \frac{1}{(1-\eta)^2}\int_0^{\pi/2}(D_\beta f )^2\sin^{-\eta}(2\beta) d\beta\\
				&+\left(\frac{2}{1-\eta}+1\right)\int_0^{\pi/2} f ^2\sin^{2-\eta}(2\beta) d\beta.
			\end{aligned}
		\end{equation}
	\end{rem}
\end{proof}

\begin{proof}[\textbf{proof of Proposition~$\ref{laplaceprop}$}]

	Setting $f=D^i_{\bar\rho}D^j_\beta \xi$, $i+j\leq k$, recall that
\begin{align}
	(\Delta-\frac{1}{\bar\rho^2\cos^2\beta}) f =&\left\{\frac{1}{\bar\rho^2}\partial_{\bar\rho}(\bar\rho^2\partial_{\bar{\rho}} f )\right\}+\left\{\frac{1}{\bar\rho^2 \cos\beta}\partial_{\beta}(\cos\beta\partial_{\beta} f )-\frac{1}{\bar\rho^2 \cos^2\beta} f \right\}\\
	=:&A_{\bar\rho}  f +A_\beta  f .
\end{align}
First we consider $A_{\bar\rho}f, A_\beta f$, i.e. the radial and angle derivative separately, proof the coercivity under different weights in $\mathcal{W}_1^k, \mathcal{W}_2^k,\mathcal{W}_3^k$. And we omit the commutator which we discuss at the end, since the commutators are low order terms.

\textbf{Step 1}: $A_{\bar\rho}  f ,$ The radial direction. 

We have the $\bar\rho$ weight in norm of~$\mathcal{W}_1^k$, $\mathcal{W}_3^k$ under polar coordinates are $\bar\rho^2 d\bar\rho$, the Laplace operator is negative definite. The $\mathcal{W}_2^k$ norm is different, under the weight~$\bar\rho^\eta d\bar\rho$, we integrate by parts:

\begin{equation}\label{rho}
	\begin{aligned}
		&\int_0^\infty A_{\bar\rho} f   f \bar\rho^{\eta}d\bar\rho\\
		=&\int_0^\infty \frac{1}{\bar\rho^2}\partial_{\bar\rho}(\bar\rho^2\partial_{\bar\rho}  f ) f \bar\rho^{\eta}d\bar\rho\\
		=&-\int_0^\infty (\partial_{\bar\rho}  f )^2\bar\rho^{\eta}d\bar\rho+\int_0^\infty (2-\eta)\bar\rho^{\eta-1}\partial_{\bar\rho} f \cdot f  d\bar\rho\\
		=&-\int_0^\infty (\partial_{\bar\rho}  f )^2\bar\rho^{\eta}d\bar\rho+\frac{(2-\eta)(1-\eta)}{2}\int_0^\infty  f ^2 \bar\rho^{\eta-2}d\bar\rho.
	\end{aligned}
\end{equation}
Where~$1-\eta=\frac{1}{100}$,therefore, the second term in the above formula is a positive term but has a small coefficient, and can be absorbed by following $A_\beta  f $ part energy integral $(\ref{laplace}).$

\textbf{Step 2.} $A_\beta  f $, i.e. the $\beta$ direction.

First we consider the more singular terms that with $\sin^{-\eta}(2\beta)$ weight, by the definition of $\mathcal{W}_i^k$ (\ref{normdef}), $f=D_{\bar\rho}^iD_\beta^j \xi$, with $(i,j)\neq(k,0)$.
\begin{equation}\label{laplace}
	\begin{aligned}
		&\int_0^{\pi/2}A_\beta  f  \cdot  f  \sin^{-\eta}(2\beta) d\beta\\
		=&\int_0^{\pi/2}\left(\frac{1}{\cos\beta}\partial_\beta(\cos\beta\partial_\beta f )-\frac{ f }{\cos^2\beta}\right)\cdot f \sin^{-\eta}(2\beta) d\beta\\
		=&\int_0^{\pi/2}\left\{-(\partial_\beta f )\sin^{-\eta}(2\beta)-\partial_\beta f \cdot f \cos\beta\partial_\beta\left(\frac{1}{\cos\beta\sin^\eta (2\beta)}\right)-\frac{ f ^2}{\cos^2 \beta}\sin^{-\eta}(2\beta)\right\} d\beta\\
		=&\int_0^{\pi/2}\left\{-(\partial_\beta f )^2\sin^{-\eta}(2\beta)+\frac{1}{2} f ^2\cdot\partial_\beta\left(\cos\beta\partial_\beta\left(\frac{1}{\cos\beta\sin^\eta (2\beta)}\right)\right)-\frac{ f ^2}{\cos^2 \beta}\sin^{-\eta}(2\beta)\right\} d\beta\\
		=&\int_0^{\pi/2}\left\{-(\partial_\beta f )^2\sin^{-\eta}(2\beta)+ f ^2\left(-\frac{\eta(1+2\eta)}{\sin^\eta(2\beta)}\!+\!\frac{2(1+\eta)\sin^2\beta}
		{\sin^{\eta+2}(2\beta)}\!+\!\frac{2\eta(1+\eta)}{\sin^{\eta+2}(2\beta)}\!-\!4\frac{\sin^2\beta}{\sin^{\eta+2}(2\beta)}\right)\right\}d\beta \\
		=&\int_0^{\pi/2}\left\{-(\partial_\beta f )^2\sin^{-\eta}(2\beta)-\frac{\eta(1+2\eta)}{\sin^\eta (2\beta)} f ^2-\frac{2(1-\eta)\sin^2\beta}
		{\sin^{\eta+2}(2\beta)} f ^2+\frac{2\eta(1+\eta)}{\sin^{\eta+2}(2\beta)} f ^2\right\} d\beta.
	\end{aligned}
\end{equation}
For the positive term in $(\ref{laplace})$, by Lemma \ref{hardy_eta}, we have
\begin{equation*}
	\int_0^{\pi/2}\frac{2\eta(1+\eta)}{\sin^{\eta+2}(2\beta)} f ^2 d\beta\leq\frac{2\eta}{1+\eta}\int_0^{\pi/2}\frac{(\partial_\beta f )^2}{\sin^\eta (2\beta)}d\beta.
\end{equation*}

Note that $\eta=\frac{99}{100}<1$, $\frac{2\eta}{1+\eta}<1$, this term controlled by $(\partial_\beta f )^2$ term, which deduces that $(\ref{laplace})$ is negative definite.

Then by $(\ref{laplace})$ and $(\ref{rho})$, we have that
\begin{small}
	\begin{equation}
		\begin{aligned}
			\int\!(\!\Delta\!-\!\frac{1}{\bar\rho^2\!\cos^2\!\beta}) f \!\cdot\! f \bar\rho^2\sin^{-\eta}\!(2\beta) d\bar\rho d\beta&\!\leq\!-\!\int\left\{(\bar\rho\partial_{\bar\rho} f )^2\sin^{-\eta}(2\beta)\!+\!\frac{1-\eta}{1+\eta}(\partial_\beta f )^2
			\sin^{-\eta}\!(2\beta)\right\} d\bar\rho d\beta\\\label{laplace_xi1}
			&-\eta(1+2\eta)\int  f ^2 \frac{d\bar\rho d\beta}{\sin^\eta (2\beta)}  - 2(1-\eta)\int  f ^2 \frac{\sin^2\beta}{\sin^{\eta+2}(2\beta)} d\bar\rho d\beta,\\
		\end{aligned}
	\end{equation}
	
	\begin{equation}\label{laplace_xi2}
		\begin{aligned}
			\int\!(\!\Delta\!-\!\frac{1}{\bar\rho^2\cos^2\!\beta}) f \!\cdot\! f \bar\rho^\eta\sin^{-\eta}\!(2\beta)\!d\bar\rho d\beta&\!\leq\!-\!\int \left\{(\bar\rho\partial_{\bar\rho} f )^2\!\sin^{-\eta}(2\beta)\!+\!\frac{1-\eta}{1+\eta}\!(\partial_\beta f )^2\!\sin^{-\eta}\!(2\beta) \right\}\bar\rho^{\eta-2} d\bar\rho d\beta\\
			&-\int\left(\eta(1+2\eta)-\frac{(2-\eta)(1-\eta)}{2}\right)\frac{ f ^2}{\sin^\eta (2\beta)}\bar\rho^{\eta-2}d\bar\rho d\beta\\
			&- \int\frac{2(1-\eta)\sin^2\beta}
			{\sin^{\eta+2}(2\beta)} f ^2\bar\rho^{\eta-2} d\bar\rho d\beta.
		\end{aligned}
	\end{equation}
\end{small}
As for the case that the $\beta$ weight is $\sin^{2-\eta}(2\beta) d\beta$, recall the definition of $\mathcal{W}_1^k, \mathcal{W}_2^k, \mathcal{W}_3^k $ (\ref{normdef}), only $f=D_{\bar\rho}^k \xi$ is with $\sin^{2-\eta}(2\beta) d\beta$ weight, for this term we have that

\begin{equation}
	\begin{aligned}
		&\int_0^{\pi/2} \left(\frac{1}{\cos\beta}\partial_\beta(\cos\beta\partial_\beta f )-\frac{ f }{\cos^2\beta}\right) f \sin^{2-\eta}(2\beta) d\beta\\
		=&\int_0^{\pi/2} \Big\{-(\partial_\beta f )^2\sin^{2-\eta}(2\beta)-(2-\eta)(3-2\eta) f ^2\sin^{2-\eta} (2\beta)-\frac{2(3-\eta)\sin^2\beta}
		{\sin^{\eta}(2\beta)} f ^2\\
		&+\frac{2(2-\eta)(1-\eta)}{\sin^{\eta}(2\beta)} f ^2 \Big\}d\beta.
	\end{aligned}
\end{equation}
The only positive term of the right-hand side above has coefficient $1-\eta$, note that in (\ref{laplace_xi1}, \ref{laplace_xi2}), we have negative term $(D_{\bar\rho}g)^2 \sin^{-\eta}(2\beta)$ with $g=D_{\bar\rho}^{k-1}\xi$,  and satisfies $f=D_{\bar\rho}g$. then the positive term above $f^2\sin^{-\eta}(2\beta)$ is absorbed in (\ref{laplaceest}).

For $\mathcal{W}_3^k$ norm, setting $f=D_{\bar\rho}^iD_{\beta}^j \phi$, recall the definition of $\mathcal{W}_3^k$ (\ref{normdef}), the difference is $\beta$ weight, there is one more $\partial_\beta$ derivative (not $D_\beta$), we need to consider following two integral. Recall the boundary condition (\ref{boundary_condition3}), using integrating by parts we have

\begin{equation}
	\begin{aligned}
		&\int \partial_{\beta} \Delta f \cdot \partial_{\beta} f \bar{\rho}^{2} \cos ^{2-\eta} \beta d \bar{\rho} d \beta=-\int\left(\bar{\rho} \partial_{\bar{\rho}} \partial_{\beta} f\right)^{2} \cos ^{2-\eta} \beta+\left(\partial_{\beta \beta} f\right)^{2} \cos ^{2-\eta} \beta d \bar{\rho} d \beta \\
		&-\int\left(1-\frac{(1-\eta)^{2}}{2}\right)\left(\partial_{\beta} f\right)^{2} \cos ^{-\eta} \beta+\frac{(2-\eta)(1-\eta)}{2}\left(\partial_{\beta} f\right)^{2} \cos ^{2-\eta} \beta d \bar{\rho} d \beta,
	\end{aligned}
\end{equation}
and $\beta$ term of $\Delta f$
\begin{equation}
	\begin{aligned}
		&\int_0^{\pi/2} \frac{1}{\cos \beta} \partial_{\beta}\left(\cos \beta \partial_{\beta} f\right) \cdot f \cos ^{2-\eta} \beta d \beta \\
		=&-\int_0^{\pi/2}\left(\partial_{\beta} f\right)^{2} \cos ^{2-\eta} \beta d \beta+\frac{(1-\eta)^{2}}{2} \int_0^{\pi/2} f^{2} \cos ^{-\eta} \beta d \beta-\frac{(1-\eta)(2-\eta)}{2} \int_0^{\pi/2} f^{2} \cos ^{2-\eta} \beta d \beta.
	\end{aligned}
\end{equation}
By Lemma \ref{hardy}, we have
\begin{equation}
	\int \frac{1}{\cos \beta} \partial_{\beta}\left(\cos \beta \partial_{\beta} f\right) \cdot f \cos ^{2-\eta} \beta d \beta \leq \int\left(\partial_{\beta} f\right)^{2} \cos ^{2-\eta} \beta d \beta+\frac{(1-\eta)(3-2 \eta)}{2} \int f^{2} \cos ^{2-\eta} \beta d \beta,
\end{equation}

then we have estimate of $\Delta\phi$ under weight $\bar\rho^2\cos^{2-\eta}\beta d\bar\rho d\beta$
	\begin{equation}
	\int\Delta\phi\cdot\phi\bar\rho^2\cos^{2-\eta}\beta d\bar\rho d\beta\leq\!-\!(1-2(1-\eta))\!\int(\bar\rho\partial_{\bar\rho}\phi)^2\cos^{2-\eta}\beta d\bar\rho d\beta+\int(\partial_\beta\phi)^2\cos^{2-\eta}\beta d\bar\rho d\beta.
\end{equation}

Next we consider the commutator, note that the exchange terms of the high-order derivative and the Laplace operator are all low-order derivatives, which can be absorbed by selecting an appropriate equivalent norm. Note that $D_\beta$  don't change the order of $\beta$ at $\beta=0$ or $\pi/2$, for simplicity we only write the integral of variable $\bar \rho$, the $D_\beta$ derivative terms are similar. Note that $D_{\bar \rho}\tilde\Delta=\tilde\Delta D_{\bar \rho}$, remainder terms come from $1/\bar\rho^2$

	\begin{equation*}
		D_{\bar\rho}^l(\Delta\xi)=D_{\bar \rho}^l\left(\frac{1}{\bar\rho^2}\tilde{\Delta}\xi\right)=\frac{1}{\bar{\rho}^{2}}\tilde{\Delta}D_{\bar\rho}^l\xi+\frac{1}{\bar{\rho}^{2}}\sum_{i=0}^{l-1}c_i\tilde{\Delta}D_{\bar\rho}^{i}\xi,
	\end{equation*}
	where~$c_i$ is the binomial coefficients independent of $\alpha$.
	
Then the first term of the above formula is estimated by the $L_2$ estimate, for remaining terms we use Cauchy-Schwarz inequality, then we have that

\begin{equation}
	\left<D^l(\Delta\xi),D^l\xi\right>_{\mathcal{W}_1^0}\leq-a_l(|\partial_\beta D_{\bar \rho}^l\xi|_{\mathcal{W}_1^0}^2+|\partial_{\bar\rho}D_{\bar\rho}^{l}\xi|_{\mathcal{W}_1^0}^2)+A_l \sum_{i=0}^{l-1} (|\partial_\beta D_{\bar \rho}^{i}\xi|_{\mathcal{W}_1^0}^2+|\partial_{\bar\rho}D_{\bar \rho}^{i}\xi|_{\mathcal{W}_1^0}^2),
	\end{equation}
by choosing a equivalent norm, we obtain the coercivity of the Laplace term in energy estimates~$(\ref{Equationepsilon})$.

%
\end{proof}

\subsection{Estimates of advection term}
Next, we give estimates of the transport terms in equation (\ref{Equationepsilon}), namely $\mathcal{M}_{\xi}$ and $\mathcal{M}_{\phi}$. Note that $\Omega=F+\varepsilon$, we have the decomposition that
\begin{align*}
	\mathcal{M}_\xi=&\mathcal{M}^{F}_\xi+\mathcal{M}^{\varepsilon}_\xi,\\
	\mathcal{M}_\phi=&\mathcal{M}^{F}_\phi+\mathcal{M}^{\varepsilon}_\phi,
\end{align*}
where
\begin{align}\label{transport}
	&\mathcal{M}^{F}_\xi(\xi,\phi)=U(\Phi_F)\partial_\beta\xi+V(\Phi_F)\bar{\rho}\partial_{\bar{\rho}}\xi+\Lambda_1(\Phi_F)\xi
	+\Lambda_2(\Phi_F)\phi,\\
	&\mathcal{M}^{\varepsilon}_\xi(\xi,\phi)=U(\Phi_\varepsilon)\partial_\beta\xi+V(\Phi_\varepsilon)\bar{\rho}\partial_{\bar{\rho}}\xi
	+\Lambda_1(\Phi_\varepsilon)\xi+\Lambda_2(\Phi_\varepsilon)\phi, \label{trans_M2}\\
	&\mathcal{M}^{F}_\phi(\xi,\phi)=U(\Phi_F)\partial_\beta\phi+V(\Phi_F)\bar{\rho}\partial_{\bar{\rho}}\phi+\Lambda_3(\Phi_F)\xi
	+\Lambda_4(\Phi_F)\phi,\\
	&\mathcal{M}^{\varepsilon}_\phi(\xi,\phi)=U(\Phi_\varepsilon)\partial_\beta\phi+V(\Phi_\varepsilon)\bar{\rho}\partial_{\bar{\rho}}\phi
	+\Lambda_3(\Phi_\varepsilon)\xi+\Lambda_4(\Phi_\varepsilon)\phi.
\end{align}
For $\mathcal{M}^{F}_\xi(\xi,\phi), \mathcal{M}^{F}_\phi(\xi,\phi)$, we have the formula of $\Phi_{F}$ (\ref{decomposition}), for $\mathcal{M}^{\varepsilon}_\xi(\xi,\phi), \mathcal{M}^{\varepsilon}_\phi(\xi,\phi)$, we have the estimate of $\Phi_{\varepsilon}$ (\ref{thm_biot}), thus we can estimate separately.

\subsubsection*{Estimates of the main terms}
For $\mathcal{M}^{F}_\xi(\xi,\phi)$ and $\mathcal{M}^{F}_\phi(\xi,\phi)$,  which are corresponding to the gradient of convection terms in the energy estimates~$(\ref{energy})$, we have the following proposition

\begin{prop}\label{Mest1}
	For all $\xi\in\mathcal {W}_1^k\bigcap\mathcal {W}_2^k,\phi\in\mathcal {W}_3^k,\quad k\geq 1$, we have that:
\begin{equation}\label{M^F}
	\begin{aligned}
		&\left<\mathcal{M}^{F}_\xi(\xi,\phi),\xi\right>_{\mathcal{W}^k_1}\leq l_2^{-1/2}(0)\left(|\xi|_{\mathcal{W}^k_1}^2+|\phi|_{\mathcal{W}^k_3}^2\right)+l_2^{1/\alpha}(0)\left(|\frac{1}{\bar\rho}\xi|_{\mathcal{W}^k_1}^2+|\frac{1}{\bar\rho}\phi|_{\mathcal{W}^k_3}^2\right),\\
		&\left<\mathcal{M}^{F}_\xi(\xi,\phi),\xi\right>_{\mathcal{W}^k_2}\leq l_2^{-1/2}(0)\left(|\xi|_{\mathcal{W}^k_2}^2+|\phi|_{\mathcal{W}^k_3}^2\right)+l_2^{1/\alpha}(0)\left(|\frac{1}{\bar\rho}\xi|_{\mathcal{W}^k_2}^2+|\frac{1}{\bar\rho}\phi|_{\mathcal{W}^k_3}^2\right),\\
		&\left<\mathcal{M}^{F}_\phi(\xi,\phi),\phi\right>_{\mathcal{W}^k_3}\leq l_2^{-1/2}(0)\left(|\xi|_{\mathcal{W}^k_2}^2+|\phi|_{\mathcal{W}^k_3}^2\right)+l_2^{1/\alpha}(0)\left(|\frac{1}{\bar\rho}\xi|_{\mathcal{W}^k_1}^2+|\frac{1}{\bar\rho}\phi|_{\mathcal{W}^k_3}^2\right).
	\end{aligned}
\end{equation}

\end{prop}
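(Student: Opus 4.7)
The plan is to establish the three inequalities by decomposing the stream function via (\ref{decomposition}) as $\Phi_F = \frac{1}{4\alpha}\sin(2\beta) L_{12}(F)(z) + \bar\Phi_F$, and then estimating the transport and multiplicative pieces of $\mathcal{M}^F_\xi$ and $\mathcal{M}^F_\phi$ on each part separately. The three inequalities are parallel in structure, so I would carry out the $\mathcal{W}^k_1$ estimate for $\langle \mathcal{M}^F_\xi(\xi,\phi),\xi\rangle$ in full, after which the $\mathcal{W}^k_2$ and $\mathcal{W}^k_3$ versions follow line by line by swapping the radial weights $\bar\rho^2 \to \bar\rho^\eta$ or the angular weight $\sin^{-\eta}(2\beta)\to\cos^{2-\eta}\beta$, and applying Lemma \ref{hardy_eta} (resp.\ Lemma \ref{hardy}) wherever a weighted Hardy inequality is needed.

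First, for the transport part $U(\Phi_F)\partial_\beta \xi + V(\Phi_F)\bar\rho\partial_{\bar\rho}\xi$, apply a generic derivative $D^m = D_{\bar\rho}^i D_\beta^j$ with $i+j\leq k$ and pair with $D^m\xi$ against the $\mathcal{W}^k_1$ weight. By Leibniz, the top-order piece is $U(\Phi_F)\partial_\beta D^m\xi + V(\Phi_F)\bar\rho\partial_{\bar\rho} D^m\xi$, which is handled by integration by parts: since $U = -3\Phi_F - \alpha D_R\Phi_F$ and $V = \partial_\beta \Phi_F - \tan\beta\,\Phi_F$, the resulting boundary/divergence term is a pointwise multiplication by a combination of first derivatives of $\Phi_F$ and the weight, which is bounded in $L^\infty$ through the explicit form of $\Phi_F^{\text{sing}}$ (the $\alpha D_R$-derivative cancels the $1/\alpha$ prefactor) and through (\ref{thm_biot}) together with the embedding (\ref{space}) for $\bar\Phi_F$. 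The commutators $[D^m, U(\Phi_F)]\partial_\beta\xi$ and $[D^m, V(\Phi_F)]\bar\rho\partial_{\bar\rho}\xi$ distribute one or more derivatives onto $\Phi_F$ and fewer onto $\xi$; each such piece is controlled by Cauchy--Schwarz plus the same elliptic and embedding estimates.

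For the multiplicative part $\Lambda_1(\Phi_F)\xi + \Lambda_2(\Phi_F)\phi$, each $\Lambda_i$ in (\ref{Lambda}) is a sum of $(\alpha D_R)^a \partial_\beta^b \Phi_F$ with trigonometric prefactors. The singular piece $\Phi_F^{\text{sing}} = \frac{\sin(2\beta)}{4\alpha} L_{12}(F)(z)$ feeds into $\Lambda_i$ with $\alpha D_R$ operators that kill the $1/\alpha$, so $\Lambda_i(\Phi_F^{\text{sing}})$ is $\mathcal{O}(1)$ in $\alpha$; the regular piece $\Lambda_i(\bar\Phi_F)$ is bounded in $L^\infty$ by (\ref{thm_biot}) and (\ref{space}). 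Pairing $\Lambda_1(\Phi_F)\xi$ with $\xi$ under the $\mathcal{W}^k_1$ weight and $\Lambda_2(\Phi_F)\phi$ with $\xi$ using Cauchy--Schwarz produces the $l_2^{-1/2}(0)(|\xi|^2_{\mathcal{W}^k_1} + |\phi|^2_{\mathcal{W}^k_3})$ contribution. The $\frac{1}{\bar\rho}$-weighted terms arise specifically from those monomials in $\Lambda_i$ that involve undifferentiated $\Phi_F$ (paired with $\tan\beta$ or $\tan^2\beta$): after evaluating $\Phi_F$ at $y = \bar y/l_2$ and using the vanishing of $F_* \sim z$ near $z = 0$, a radial factor $l_2^{1/\alpha}(0)/\bar\rho$ is exposed, which when paired with $\xi$ or $\phi$ via Cauchy--Schwarz yields the second piece of the bound.

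The main obstacle is the coordinate mismatch: $\Phi_F$ lives naturally in $y$, whereas $\xi, \phi$ are functions of $\bar y = l_2(s) y$ integrated against $\bar\rho$-weighted measures, so the change of variables $y \leftrightarrow \bar y$ must be tracked at each occurrence to produce the stated $l_2^{-1/2}(0)$ and $l_2^{1/\alpha}(0)$ prefactors. In particular, different monomials of $\Lambda_i$ have different behavior at $z=0$ and $z=\infty$, and one must verify that every remainder ends up in one of the two classes on the right-hand side. A secondary complication is the cross-coupling induced by $\Lambda_2$ and $\Lambda_3$, which is why the three bounds of the proposition must be stated together: the estimate for $|\xi|_{\mathcal{W}^k_1}$ and $|\xi|_{\mathcal{W}^k_2}$ depends on $|\phi|_{\mathcal{W}^k_3}$, and conversely the bound for $|\phi|_{\mathcal{W}^k_3}$ depends on both $|\xi|_{\mathcal{W}^k_1}$ and $|\xi|_{\mathcal{W}^k_2}$.
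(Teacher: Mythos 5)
Your setup --- decomposing $\Phi_F$ via (\ref{decomposition}), integrating by parts on the top-order transport terms, and controlling the $\Lambda_i(\Phi_F)$ multipliers through (\ref{thm_biot}) and the embedding (\ref{space}) --- matches the paper's starting point, which records these facts as the asymptotics (\ref{UV2})--(\ref{Lambda2}): every coefficient equals $c(\beta)\frac{1}{1+y}+O(\alpha)$ with the $O(\alpha)$ part bounded in $\mathcal{H}^k$. The genuine gap is in how the two specific prefactors $l_2^{-1/2}(0)$ and $l_2^{1/\alpha}(0)$ are produced, which is the whole content of the proposition: the first must be small so that it can be absorbed by Corollary \ref{l1l2cor}, and the second must be dominated by the Laplacian's coercive contribution of size $l_2^{2/\alpha}(0)Y(s)$. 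Pairing $\Lambda_1(\Phi_F)\xi$ with $\xi$ (and $\Lambda_2(\Phi_F)\phi$ with $\xi$) by Cauchy--Schwarz, as you propose, only yields an absolute constant $C$ in front of $|\xi|^2_{\mathcal{W}^k_1}+|\phi|^2_{\mathcal{W}^k_3}$, since $\Lambda_1(\Phi_F)\approx\frac{2}{1+y}$ has $L^\infty$ norm of order one --- not the small constant $l_2^{-1/2}(0)$. And your mechanism for the second term --- that evaluating $\Phi_F$ at $y=\bar y/l_2$ and using the vanishing $F_*\sim z$ near $z=0$ ``exposes'' a factor $l_2^{1/\alpha}(0)/\bar\rho$ --- does not work: the change of variables only gives $\frac{1}{1+y}=\frac{l_2(s)}{l_2(s)+\bar\rho^{\alpha}}\le l_2(s)\,\bar\rho^{-\alpha}$, i.e.\ decay of order $\bar\rho^{-\alpha}$, and no monomial of the $\Lambda_i$ (including the $\tan\beta$, $\tan^2\beta$ terms) produces a full power $\bar\rho^{-1}$.

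What the paper actually does at this point is an interpolation in the radial weight: it writes $\frac{\bar\rho^{2}}{1+y}=\bar\rho^{2-\alpha}\cdot\frac{l_2(s)\bar\rho^{\alpha}}{l_2(s)+\bar\rho^{\alpha}}$ and applies Young's inequality with exponents $\frac{2}{2-\alpha}$ and $\frac{2}{\alpha}$ and a free parameter $\delta_0$, splitting the integrand between the full weight $\bar\rho^{2}$ (coefficient comparable to $\delta_0$) and the weight $\bar\rho^{0}$, i.e.\ the $\bigl|\frac{1}{\bar\rho}\,\cdot\,\bigr|_{\mathcal{W}^k_i}$ norms (coefficient comparable to $\delta_0^{-2/\alpha}$); the choice $\delta_0=l_2^{-1/2}(0)$ then gives precisely the two constants in (\ref{M^F}), since $\delta_0^{-2/\alpha}=l_2^{1/\alpha}(0)$, and the same splitting is repeated for the $\mathcal{W}^k_2$ and $\mathcal{W}^k_3$ weights. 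Without this step (or an equivalent radial splitting with the same bookkeeping in $l_2(0)$), your argument terminates with $C\bigl(|\xi|^2_{\mathcal{W}^k_1}+|\phi|^2_{\mathcal{W}^k_3}\bigr)$ for an $\alpha$- and $l_2$-independent constant $C$, which cannot be absorbed in the energy estimate (\ref{energy}) and therefore does not prove the proposition.
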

\begin{rem}
	Note that the we can choose $l_2(0)$ big enough in Proposition \ref{prop_time}, such that the first term in the right-hand side above like  $|\xi|_{\mathcal{W}^k_1}^2+|\phi|^2_{\mathcal{W}_3^k}$ can be absorbed by $(\ref{estimate_2})$ in energy estimates (\ref{energy}). And by Proposition \ref{prop_s}, the behavior of Laplace term in energy estimate (\ref{energy}) $\Big(\frac{\mu l_2}{\lambda^{1+\delta}}\Big)^{\frac{2}{\alpha}}\lambda Y(s)\approx l_2^{2/\alpha}(0)Y(s)$, the second term in the above equalities, can be absorbed by (\ref{laplaceest}), recall that $Y(s)$.
\end{rem}
	For $\mathcal{M}^{F}_\xi(\xi,\phi),\mathcal{M}^{F}_\phi(\xi,\phi)$, there are $\Lambda_2(\Phi_F)\phi,\Lambda_3(\Phi_F)\xi$ appearing respectively, by using elliptic estimate $(\ref{thm_biot})$, we have

\begin{equation}\label{UV2}
	\begin{aligned}
		U\left(\Phi_{F}\right)&=-3 \sin (2 \beta) \frac{1}{1+y}+O(\alpha), \\
		V\left(\Phi_{F}\right)&=\left(2 \cos (2 \beta)-2 \sin ^{2}\beta\right) \frac{1}{1+y}+O(\alpha),
	\end{aligned}
\end{equation}
\begin{equation}\label{Lambda2}
	\begin{aligned}
		\Lambda_1\left(\Phi_{F}\right)&=\frac{2}{1+y}+O(\alpha),\quad		\Lambda_2\left(\Phi_{F}\right)=O(\alpha),\\
		\Lambda_3\left(\Phi_{F}\right)&=O(\alpha),\quad		\Lambda_4\left(\Phi_{F}\right)=-\frac{4\cos^2\beta}{1+y}+O(\alpha).
	\end{aligned}
\end{equation}
Where $O(1)$ is bounded term of $\mathcal{H}^k$ norm.

Here, the difficulty is the main term, since the remainder term is  $\mathcal{H}^k$ norm bounded with coefficient of order $\alpha$. Note that the formulation of $y$ variable is $\frac{1}{1+y}$, we have
\begin{equation*}
	\frac{1}{1+y}=\frac{l_2(s)}{l_2(s)+\bar y}\leq\frac{l_2(s)}{\bar y}.
\end{equation*}
In $(\ref{energy})$, first we consider the integral in $\bar\rho$ variable, we have
\begin{equation*}
	D_{\bar \rho}^k\frac{l_2(s)}{l_2(s)+\bar y}=\alpha^{k}\frac{l_2(s)}{l_2(s)+\bar y}\cdot\sum_{j=1}^{k-1}c_{k,j}\frac{\bar y ^j}{(l_2(s)+\bar y)^j}.
\end{equation*}
The order at infinity and 0 is $\bar y^{-1}=\bar\rho^{-\alpha}$ which doesn't depend on $k$, we consider $k=0$ case for simplicity, for $U(\Phi_ {F})$ we use integration by parts
\begin{equation*}
	\begin{aligned}
		&\left<U(\Phi_{F})\partial_\beta\xi,\xi\right>_{\mathcal{W}^0_1}\\
		\leq& C\int_0^{\infty}\int_0^{\pi/2}\frac{l_2(s)}{l_2(s)+\bar\rho^{\alpha}}\xi^2\bar \rho^{2}\sin^{-\eta}(2\beta) d\bar\rho d\beta+\alpha|\xi|_{\mathcal{W}^0_1}^2
	\end{aligned}
\end{equation*}
By smallness of $\alpha$, we set $0<\tau<1$ such that $2-\alpha=\tau\cdot2+(1-\tau)\cdot 0$, i.e. $\tau=1-\alpha/2$, then for $\delta_0 >0$, by Young inequality
\begin{equation*}
	\begin{aligned}
		&\int_0^{\infty}\int_0^{\pi/2}(\bar \rho^{2-\alpha}\frac{l_2(s)\bar\rho^{\alpha}}{l_2(s)+\bar\rho^\alpha})\xi^2\sin^{-\eta}(2\beta) d\bar\rho d\beta\\
		\leq&\frac{2}{2-\alpha}\delta_{0}^{\frac{2}{2-\alpha}} \int_0^{\infty}\int_0^{\pi/2}\xi^2\bar\rho^{2}\sin^{-\eta}(2\beta) d\bar\rho d\beta+\frac{\alpha}{2}\int_0^{\infty}\int_0^{\pi/2}\xi^2\sin^{-\eta}(2\beta)\left(\frac{\delta_0^{-1}l_2(s)\bar\rho^\alpha}{l_2(s)+\bar\rho^\alpha}\right)^{2/\alpha} d\bar\rho d\beta\\
		<&\delta_{0}\int_0^{\pi/2}\xi^2\bar\rho^{2}\sin^{-\eta}(2\beta) d\bar\rho d\beta+\delta^{-2/\alpha}_0 \int_0^{\infty}\int_0^{\pi/2}\xi^2\sin^{-\eta}(2\beta)d\bar\rho d\beta.	\end{aligned}
\end{equation*}
Choose $\delta_{0}=l_2^{-1/2}(0)$, then $\delta_{0}^{-2/\alpha}=l_2^{1/\alpha}(0)$, 
By $(\ref{l_1l_2})$, we obtain $(\ref{M^F})$. The treatment for for~$\mathcal{W}^k_2,\mathcal{W}^k_3$ cases is the same.
\subsubsection*{Estimates of remainder term}
For $\mathcal{M}_\xi^\varepsilon$ and $\mathcal{M}_\phi^\varepsilon$, we have following proposition:
\begin{prop}\label{Mest2}
	For all~$\xi\in\mathcal{W}^k_1\bigcap\mathcal{W}^k_2, \phi\in\mathcal{W}^k_3,\quad k\geq4$, we have
	\begin{equation}\label{prop_est}
		\begin{aligned}
			& \left<\mathcal M^{\varepsilon}_\xi(\xi,\phi),\xi\right>_{\mathcal{W}^k_1}\leq\frac{C}{\alpha^{3/2}}|\varepsilon|_{\mathcal {H}^k}|\xi|_{\mathcal{W}^k_1}(|\xi|_{\mathcal{W}^k_1}+|\phi|_{\mathcal{W}^k_3}),\\
			& \left<\mathcal M^{\varepsilon}_\xi(\xi,\phi),\xi\right>_{\mathcal{W}^k_2}\leq\frac{C}{\alpha^{3/2}}|\varepsilon|_{\mathcal {H}^k}|\xi|_{\mathcal{W}^k_2}(|\xi|_{\mathcal{W}^k_2}+|\phi|_{\mathcal{W}^k_3}),\\
			& \left<\mathcal M^{\varepsilon}_\phi(\xi,\phi),\phi\right>_{\mathcal{W}^k_3}\leq\frac{C}{\alpha^{3/2}}|\varepsilon|_{\mathcal {H}^k}|\phi|_{\mathcal{W}^k_1}(|\xi|_{\mathcal{W}^k_1}+|\phi|_{\mathcal{W}^k_3}).
		\end{aligned}
	\end{equation}
\end{prop}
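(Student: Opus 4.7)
The proposition reduces to controlling four atomic bilinear quantities of the schematic form $\langle c(\Phi_\varepsilon)\cdot\mathcal{D}\zeta,\psi\rangle_{\mathcal{W}^k_i}$, where $c$ is one of $U,V,\Lambda_1,\ldots,\Lambda_4$, $\mathcal{D}$ is either the identity, $\partial_\beta$, or $\bar\rho\partial_{\bar\rho}$, and $\zeta,\psi\in\{\xi,\phi\}$. The plan is to treat all such constituents uniformly by combining three ingredients: the Biot--Savart estimate (\ref{thm_biot}) for $\Phi_\varepsilon$, the embedding (\ref{space}) which trades $\mathcal{H}^2$ control for $L^\infty$ control at the cost of one factor $\alpha^{-1/2}$, and integration by parts to move the top-order derivative onto the low-regularity factor.

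First I would establish the coefficient bounds. Because the definitions (\ref{U_V}), (\ref{Lambda}) pair every occurrence of $D_R$ with an explicit $\alpha$, the elliptic estimate (\ref{thm_biot}) applied to $\Phi_\varepsilon$ yields $|c(\Phi_\varepsilon)|_{\mathcal{H}^k}\le C|\varepsilon|_{\mathcal{H}^k}$ for every such $c$. Combined with (\ref{space}), this produces the pointwise bounds $|D_\beta^{i_1}D_{\bar\rho}^{i_2} c(\Phi_\varepsilon)|_{L^\infty}\le \tfrac{C}{\sqrt\alpha}|\varepsilon|_{\mathcal{H}^k}$ whenever $i_1+i_2\le k-2$. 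Next, for a generic constituent $\int D_\beta^i D_{\bar\rho}^j(c\cdot\mathcal{D}\zeta)\cdot D_\beta^i D_{\bar\rho}^j\psi\, w_{i,j}$ of $\langle\mathcal{M}^\varepsilon_\xi(\xi,\phi),\xi\rangle_{\mathcal{W}^k_i}$, I expand via Leibniz. The term in which all derivatives land on $\zeta$ exceeds by one the derivative count that the norm controls, so I integrate by parts in the corresponding direction; the boundary contributions vanish automatically in $\bar\rho$ and vanish in $\beta$ thanks to (\ref{boundary_condition2}), (\ref{boundary_condition3}). The remaining Leibniz pieces pair $D^\ell c$ with $D^{k-\ell}\zeta$; I place the factor of lower derivative order in $L^\infty$, using either the coefficient bound just established or the embedding applied to $\zeta$ or $\phi$ itself, and keep the other factor in the weighted $L^2$ defining $\mathcal{W}^k_i$. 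This covering is valid for all distributions of derivatives since $k\ge 4$.

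The constant $\alpha^{-3/2}$ on the right-hand side of (\ref{prop_est}) has two independent sources. One factor $\alpha^{-1/2}$ comes from the single application of (\ref{space}) that puts the lower-order factor in $L^\infty$. The additional $\alpha^{-1}$ arises from reconciling weights: the $\mathcal{H}^k$-norm is expressed in $R=\bar\rho^\alpha$ whereas the $\mathcal{W}^k_i$-norms are expressed in $\bar\rho$, so the Jacobian $dR=\alpha\bar\rho^{\alpha-1}d\bar\rho$, combined with the identity $D_{\bar\rho}=\alpha D_R$ used when translating a Leibniz term, introduces one extra $\alpha^{-1}$ when transferring a bound from one family of weighted spaces to the other.

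The main obstacle will be the transport piece $V(\Phi_\varepsilon)\bar\rho\partial_{\bar\rho}\xi$ tested against $\xi$ in the $\mathcal{W}^k_1$ and $\mathcal{W}^k_2$ inner products, because the integration-by-parts step produces a $\partial_\beta V(\Phi_\varepsilon)$ paired with singular weights such as $\sin^{-2-\eta}(2\beta)$. Such singular pairings are absorbed by the weighted Hardy inequality of Lemma \ref{hardy_eta}, which trades a $\partial_\beta$ derivative for the weight $\sin^{-\eta}(2\beta)$ at the explicit cost $(1+\eta)^{-2}$, a quantity that is finite precisely because $\eta<1$. The cross terms $\Lambda_2(\Phi_\varepsilon)\phi$ in the first and second inequalities of (\ref{prop_est}) and $\Lambda_3(\Phi_\varepsilon)\xi$ in the third are the direct source of the product $|\xi|_{\mathcal{W}^k_i}|\phi|_{\mathcal{W}^k_3}$; summing the four atomic bounds for each inequality then completes the proof.
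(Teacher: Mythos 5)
There is a genuine gap, and it sits exactly where the paper has to work hardest: the third inequality, $\left<\mathcal M^{\varepsilon}_\phi(\xi,\phi),\phi\right>_{\mathcal{W}^k_3}$. Your uniform scheme (Leibniz, put the low-order factor in $L^\infty$, keep the high-order factor in the weighted $L^2$ defining the norm) breaks down for the cross term $\Lambda_3(\Phi_\varepsilon)\xi$, because the $\mathcal{W}^k_3$ inner product (\ref{normdef}) involves a bare $\partial_\beta$ (not $D_\beta=\sin(2\beta)\partial_\beta$) with weight $\cos^{2-\eta}\beta$, which does \emph{not} vanish at $\beta=0$. Applying $\partial_\beta$ to $\Lambda_3(\Phi_\varepsilon)\xi$ produces terms of the form $\Lambda_3(\Phi_\varepsilon)\,\partial_\beta\xi$, and the quantity you would need to keep in weighted $L^2$, namely $\int(\partial_\beta D^{j}\xi)^2\bar\rho^2\cos^{2-\eta}\beta$, is not controlled by $|\xi|_{\mathcal{W}^k_1}+|\xi|_{\mathcal{W}^k_2}$: those norms only control $D_\beta$-derivatives of $\xi$ against weights like $\sin^{2-\eta}(2\beta)$ or $\sin^{-\eta}(2\beta)$, which degenerate at $\beta=0$ where $\cos^{2-\eta}\beta$ does not. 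The paper closes precisely this hole with the compatibility condition $\partial_3\xi=\partial_r\phi$ of Remark \ref{remark}, i.e. (\ref{exchangeorder}), which yields $|\cos\beta\,\partial_\beta\xi|\le|\sin\beta\,D_{\bar\rho}\xi|+|\cos\beta\,D_{\bar\rho}\phi|+|\sin\beta\,\partial_\beta\phi|$ and thereby converts the offending $\partial_\beta\xi$ into quantities controlled by $|\xi|_{\mathcal{W}^k_1}$ and $|\phi|_{\mathcal{W}^k_3}$; it also needs the splitting $\Lambda_3(\Phi_\varepsilon)=\sin^2\beta\,G_1+G_2$ with an $\mathcal{H}^{k-1}$ bound on $G_2/\cos\beta$. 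None of this structural input appears in your proposal, and without it the third estimate in (\ref{prop_est}) does not follow.

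Two further points. First, your coefficient bound $|c(\Phi_\varepsilon)|_{\mathcal{H}^k}\le C|\varepsilon|_{\mathcal{H}^k}$ is not what (\ref{thm_biot}) gives: that estimate controls only the regular part $\bar\Phi_\varepsilon$, while $\Phi_\varepsilon$ carries the singular piece $\frac{1}{4\alpha}\sin(2\beta)L_{12}(\varepsilon)$ of (\ref{decomposition}), so the correct bounds are of the type $\left|U(\Phi_\varepsilon)/\sin(2\beta)\right|_{\mathcal{H}^k}\le\frac{C}{\alpha}|\varepsilon|_{\mathcal{H}^k}$ and $|\partial_\beta V(\Phi_\varepsilon)|_{\mathcal{H}^k}\le\frac{C}{\alpha}|\varepsilon|_{\mathcal{H}^k}$ (the latter requiring the extra Hardy-type Lemma \ref{hardy2} for $\partial_\beta(\bar\Phi_\varepsilon/\cos\beta)$). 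Consequently your accounting of $\alpha^{-3/2}$ is a misdiagnosis: the extra $\alpha^{-1}$ comes from this $1/(4\alpha)$ singular part entering $U,V,\Lambda_i$, not from the Jacobian $dR=\alpha\bar\rho^{\alpha-1}d\bar\rho$. Second, your claim that the $L^\infty$ embedding covers ``all distributions of derivatives since $k\ge4$'' fails at the borderline Leibniz terms with $k-1$ or $k$ derivatives on the coefficient: $D^{k-1}c$ and $D^kc$ are not in $L^\infty$ from an $\mathcal{H}^k$ bound, and the paper handles these with anisotropic mixed norms ($L^2_{\bar\rho}L^\infty_\beta$, $L^\infty_{\bar\rho}L^2_\beta$) via Lemma \ref{lemma_inq}, putting the derivative-saturated coefficient in a weighted $L^2$ in one variable only. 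These cases need to be treated explicitly for the proof to close.
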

We have to consider the terms of $\Phi_\varepsilon$ in $(\ref{transport})$, in ~\cite{Elgindi1904}, author has estimate of $U(\Phi_\varepsilon)$:
\begin{equation}\label{UV}
	\left|\frac{U(\Phi_\varepsilon)}{\sin (2\beta)}\right|_{\mathcal H^k}\leq\frac{C}{\alpha}\left|\varepsilon\right|_{\mathcal H^k }.
\end{equation}

In addition, in order to make $\mathcal W_i^k$ norm and $\mathcal H^k $ norm match (i=1,2,3), we need higher order estimates of $V(\Phi_\varepsilon)$and $\Lambda_i(\Phi_\varepsilon)$.
For $V(\Phi_\varepsilon)$, recall its definition (\ref{U_V}) we have that
\begin{equation}
	\partial_\beta V(\Phi_\varepsilon)=\partial_{\beta\beta}\Phi_\varepsilon-\sin\beta\partial_\beta\left(\frac{\Phi_\varepsilon}{\cos\beta}\right)-\Phi_\varepsilon.
\end{equation}

By (\ref{thm_biot}), we have

\begin{equation}\label{partialV}
	\begin{aligned}
		\partial_\beta V(\Phi_\varepsilon)=&\partial_{\beta\beta}\Phi_\varepsilon-\sin\beta\partial_\beta(\frac{\Phi_\varepsilon}{\cos\beta})-\Phi_\varepsilon\\
		=&-\frac{3}{2\alpha}\sin(2\beta) L_{12}(\varepsilon)+\left(\partial_{\beta\beta}\bar{\Phi}_\varepsilon-\sin\beta\partial_\beta\left(\frac{\bar{\Phi}_\varepsilon}{\cos\beta}\right)-\bar{\Phi}_\varepsilon\right).	\end{aligned}
\end{equation}
The only term we need deal with is $\partial_\beta(\frac{\bar{\Phi}_\varepsilon}{\cos\beta})$, we have following lemma
\begin{lemma}\label{hardy2}
	For all $f(z,\beta)\in C_0^\infty([0,\infty))\times[0,\pi/2])$ and $-1<\eta<1$, satisfying $f(z,0)=f(z,\pi/2)=0$, for $z>0$, we have that
	\begin{equation}
		|\partial_\beta(\frac{f}{\sin(2\beta)})|_{\mathcal{H}^k}\leq C|\partial_{\beta\beta}f|_{\mathcal{H}^k}.
	\end{equation}
\end{lemma}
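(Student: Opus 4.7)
My plan is to exploit the weighted Hardy inequality (Lemma \ref{hardy_eta}) via a decomposition that turns the singular quotient $f/\sin(2\beta)$ into a function with explicit boundary vanishing.

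First I would reduce to one-dimensional $\beta$-estimates. Since $D_R = R\partial_R$ commutes with multiplication by functions of $\beta$ alone, we have $D_R^j\,\partial_\beta(f/\sin(2\beta)) = \partial_\beta(D_R^j f/\sin(2\beta))$, so each term $(D_\beta)^i D_R^j\,\partial_\beta(f/\sin(2\beta))$ appearing in the $\mathcal{H}^k$ norm corresponds to the same operation applied to $D_R^j f$ (which still satisfies the vanishing boundary condition at $\beta=0,\pi/2$). Thus it suffices to prove the pure $\beta$-estimates with weights $\sin^{-\eta}(2\beta)$ and $\sin^{-\gamma}(2\beta)$ for each fixed radial derivative.

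For the base estimate I would write
\begin{equation*}
\partial_\beta\!\left(\frac{f}{\sin(2\beta)}\right)=\frac{N}{\sin^2(2\beta)},\qquad N:=\sin(2\beta)\,\partial_\beta f-2\cos(2\beta)\,f,
\end{equation*}
and observe, via Taylor expansion at the endpoints using $f(z,0)=f(z,\pi/2)=0$, that $N$ vanishes to second order at both endpoints, so in particular $N/\sin(2\beta)$ vanishes at $\beta=0,\pi/2$. A direct computation also gives the crucial identity $\partial_\beta N=\sin(2\beta)(\partial_{\beta\beta}f+4f)$.

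Next I would apply Lemma \ref{hardy_eta} to $g:=N/\sin(2\beta)$, which yields
\begin{equation*}
\int_0^{\pi/2}\!\!\left(\frac{N}{\sin(2\beta)}\right)^{\!2}\!\sin^{-\eta-2}(2\beta)\,d\beta\le\frac{1}{(1+\eta)^2}\int_0^{\pi/2}\!\left(\partial_\beta\frac{N}{\sin(2\beta)}\right)^{\!2}\!\sin^{-\eta}(2\beta)\,d\beta.
\end{equation*}
The left-hand side equals $\int (\partial_\beta(f/\sin(2\beta)))^2\sin^{-\eta}(2\beta)\,d\beta$, our target. On the right, substitute
\begin{equation*}
\partial_\beta\!\left(\frac{N}{\sin(2\beta)}\right)=(\partial_{\beta\beta}f+4f)-2\cos(2\beta)\,\frac{N}{\sin^2(2\beta)},
\end{equation*}
and expand by Young's inequality with a small parameter. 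This produces three contributions: $\int(\partial_{\beta\beta}f)^2\sin^{-\eta}$ (the desired term), $\int f^2\sin^{-\eta}$, and $\int\cos^2(2\beta)(N/\sin^2)^2\sin^{-\eta}$. The $f^2$ term is absorbed into $\int(\partial_{\beta\beta}f)^2\sin^{-\eta}$ via iterated use of Lemma \ref{hardy_eta} together with the pointwise bound $|\partial_\beta f(z,0)|^2+|\partial_\beta f(z,\pi/2)|^2\le C\int(\partial_{\beta\beta}f)^2\,d\beta$ (obtained by applying the mean-value theorem to $f$, since $f(z,0)=f(z,\pi/2)=0$ forces a zero of $\partial_\beta f$ in the interior).

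The main obstacle is the $\cos^2(2\beta)(N/\sin^2)^2$ term, which is of the same form as the left-hand side. Here I would use $\cos^2(2\beta)=1-\sin^2(2\beta)$ together with the higher-order vanishing $N=O(\beta^2)$ to split the integrand: the $-\sin^2(2\beta)$ contribution gains two extra powers of $\sin(2\beta)$ and becomes strictly weaker than the target, while the remaining term of the form $\int N^2\sin^{-\eta-4}$ is absorbed into the left-hand side by choosing the Young parameter small enough and using that the prefactor on the right, originating from the constant $1/(1+\eta)^2$ together with the $\cos^2$ reduction, is strictly less than one after collecting terms. Finally the higher-derivative cases in $\mathcal{H}^k$ (those with $D_\beta^i$ or additional $D_R^j$ factors and weight $\sin^{-\gamma}$) follow by induction, the commutators between $D_\beta=\sin(2\beta)\partial_\beta$ and division by $\sin(2\beta)$ producing only lower-order terms absorbable by choosing an equivalent norm, exactly as in the commutator argument at the end of the proof of Proposition \ref{laplaceprop}.
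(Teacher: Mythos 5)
Your setup is attractive and partly correct: the reduction to fixed radial derivatives, the decomposition $\partial_\beta\bigl(\tfrac{f}{\sin(2\beta)}\bigr)=\tfrac{N}{\sin^2(2\beta)}$ with $N=\sin(2\beta)\partial_\beta f-2\cos(2\beta)f$, the second-order vanishing of $N$ at the endpoints, and the identity $\partial_\beta N=\sin(2\beta)(\partial_{\beta\beta}f+4f)$ are all fine. The proof breaks at the absorption step. After applying Lemma \ref{hardy_eta} to $g=N/\sin(2\beta)$ and expanding $\partial_\beta g=(\partial_{\beta\beta}f+4f)-2\cos(2\beta)N/\sin^2(2\beta)$ by Young's inequality, the dangerous term $\int\cos^2(2\beta)\bigl(N/\sin^2(2\beta)\bigr)^2\sin^{-\eta}(2\beta)\,d\beta$ enters with coefficient $4(1+\epsilon)/(1+\eta)^2$, and since $(1+\eta)^2<4$ for every $\eta\in(-1,1)$ (for the paper's $\eta=99/100$ one has $4/(1+\eta)^2\approx 1.01$), this coefficient exceeds $1$ no matter how small the Young parameter is. It cannot be brought below $1$ by the trick $\cos^2(2\beta)=1-\sin^2(2\beta)$: that only subtracts the strictly weaker integral $\int\bigl(N/\sin^2(2\beta)\bigr)^2\sin^{2-\eta}(2\beta)\,d\beta$, which admits no lower bound by a fixed multiple of the target $T=\int\bigl(N/\sin^2(2\beta)\bigr)^2\sin^{-\eta}(2\beta)\,d\beta$ (the mass of $N^2\sin^{-4-\eta}(2\beta)$ can concentrate near the endpoints where $\cos^2(2\beta)\approx 1$). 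After collecting terms you are left with a negative coefficient multiplying $T$, so the inequality gives no bound at all; the assertion that the prefactor is ``strictly less than one after collecting terms'' is false, and this is exactly the point where the estimate has to close. A secondary issue: for the mixed-derivative terms of $\mathcal{H}^k$ the weight is $\sin^{-\gamma}(2\beta)$ with $\gamma=1+\alpha/10>1$, outside the stated range of Lemma \ref{hardy_eta}, so those cases cannot be dispatched by the same black-box application.

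The loss comes from discarding the sign of the cross term. Your route can be repaired by keeping it: set $w=f/\sin(2\beta)$, note $\partial_{\beta\beta}f+4f=\sin(2\beta)\partial_{\beta\beta}w+4\cos(2\beta)\partial_\beta w$, expand $\int(\sin(2\beta)\partial_{\beta\beta}w+4\cos(2\beta)\partial_\beta w)^2\sin^{-\eta}(2\beta)\,d\beta$ and integrate the cross term by parts; the boundary terms vanish and one obtains a lower bound by $c\int(\partial_\beta w)^2\sin^{-\eta}(2\beta)\,d\beta$ with $c>0$ for every $\eta\in(-1,1)$, after which the lower-order $\int f^2\sin^{-\eta}(2\beta)\,d\beta$ term is handled as you indicate. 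The paper's own proof avoids the difficulty differently: it splits $[0,\pi/2]$ at $\pi/4$, replaces $\sin(2\beta)$ by $\beta$ (respectively $\pi/2-\beta$), sets $h=f/\beta$, and integrates $\partial_\beta^{i+1}h\cdot\partial_\beta^{i+2}f$ by parts, where the coefficient $(i+2)-\tfrac{2i-\gamma+1}{2}=\tfrac{3+\gamma}{2}>0$ makes Cauchy--Schwarz close the estimate directly, uniformly in the weight exponent; this also covers the $\sin^{-\gamma}$-weighted terms. Your remark on higher derivatives via commutators is consistent with the paper's treatment and is not the problem.
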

\begin{proof}
	
	Observe that for $\beta\in(0,\pi/4)$, we have
	\begin{equation}
		\int_{0}^{\pi/4}|D^k_\beta\partial_\beta(\frac{f}{\sin(2\beta)}) |^2\sin^{-\gamma}(2\beta) d\beta,
	\end{equation}
	recall that $D^k_\beta=(\sin(2\beta)\partial_\beta)^k$, for $1\leq i\leq k$ we have
	\begin{equation}
		\begin{aligned}
			&\int_{0}^{\pi/4}|(\sin(2\beta))^{i}\partial^{i+1}_\beta(\frac{f}{\sin(2\beta)}) |^2\sin^{-\gamma}(2\beta) d\beta\\
			\leq&C\int_{0}^{\pi/4}\beta^{2i-\gamma}(\partial^{i+1}_\beta(\frac{f}{\beta}))^2 d\beta.
		\end{aligned}
	\end{equation}
	Setting $h=\frac{f}{\beta}$, then~$\partial^{i+2}_\beta f=\beta\partial^{i+2}h+(i+2)\partial_\beta^{i+1}h$, we have
	\begin{equation*}
		\begin{aligned}
			&\int_{0}^{\pi/4}\beta^{2i-\gamma}\partial^{i+1}_\beta h\cdot \partial_\beta^{i+2}f d\beta\\
			=&\int_{0}^{\pi/4}\beta^{2i-\gamma}((i+2)(\partial^{i+1}_\beta h)^2+\beta\partial_\beta^{i+1}h\cdot\partial_\beta^{i+2}h) d\beta\\
			=&\frac{\beta^{2i-\gamma+1}}{2}(\partial_\beta^{k}h)^2\Big|_{\beta=\pi/4}+\left((i+2)-\frac{2i-\gamma+1}{2}\right)\int_{0}^{\pi/4}\beta^{2i-\gamma}((\partial^{i+1}_\beta h)^2 d\beta.
		\end{aligned}
	\end{equation*}
	For the second term, from the symmetry that the boundary term in the integration by parts and the boundary term of the corresponding integral on $\beta\in(\pi/4,\pi/2)$ are eliminated, then by Cauchy-Schwarz inequality, we have
	\begin{align*}
		&\int_{0}^{\pi/4}|\beta^{2i-\gamma}(\partial^{i+1}_\beta(\frac{f}{\beta}))|^2 d\beta+
		\int_{\pi/4}^{\pi/2}|(\pi/2-\beta)^{2i-\gamma}(\partial^{i+1}_\beta(\frac{f}{\beta}))|^2 d\beta\\
		\leq&C\left(\int_{0}^{\pi/4}\beta^{2i-\gamma}(\partial^{i+2}_\beta f)^2 d\beta+\int_{\pi/4}^{\pi/2}(\pi/2-\beta)^{2i-\gamma}(\partial^{i+2}_\beta f)^2 d\beta\right)\\
		\leq&C \int_{0}^{\pi/2} (D^i_\beta\partial_{\beta\beta}f)^2\sin^{-\gamma}(2\beta) d\beta,
	\end{align*}
	\begin{equation*}
		\int_{0}^{\pi/2}(D^i_y(\frac{f}{\beta}))^2 \sin^{-\eta}(2\beta) d\beta	\leq C \int_{0}^{\pi/2} (D^i_y\partial_{\beta\beta}f)^2\sin^{-\eta}(2\beta) d\beta.
	\end{equation*}
\end{proof}
For~$\partial_\beta(\frac{\bar{\Phi}_\varepsilon}{\cos\beta})$ we have that
\begin{equation*}
	\left| \partial_\beta(\frac{\bar{\Phi}_\varepsilon}{\cos\beta})\right|_{\mathcal{H}^k}\leq C|\varepsilon|_{\mathcal{H}^k}.
\end{equation*}
For~$(\ref{partialV})$, we have that
\begin{equation}\label{V}
	\left|\partial_\beta V(\Phi_\varepsilon)\right|_{\mathcal{H}^k}\leq\frac{C}{\alpha}|\varepsilon|_{\mathcal{H}^k}.
\end{equation}

As for~$\Lambda_i(\Phi_\varepsilon),i=1,2,3,4$, note that every terms in $(\ref{Lambda})$ is Second order derivative terms of stream function $\Phi_\varepsilon$.
The estimates of the transport term also needs the following $L^\infty$ estimates (\cite{inequality})
\begin{lemma}\label{lemma_inq}
	For $f(\bar\rho)\in C^\infty_0(0,\infty),g(\beta)\in C^\infty_0(0,\pi/2)$, we have that
	\begin{equation*}
		\begin{aligned}
			&|f|_{L^\infty}\leq C \int_0^\infty (|f|^2+|D_{\bar\rho} f|^2)\frac{d\bar\rho}{\bar\rho},\\
			&|g|_{L^\infty}\leq C \int_0^{\pi/2} (|g|^2+|D_{\beta} g|^2)\frac{d\beta}{\sin (2\beta)}.
		\end{aligned}
	\end{equation*}
\end{lemma}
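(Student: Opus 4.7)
The plan is to reduce both weighted inequalities to the standard one–dimensional Sobolev embedding $H^{1}(\mathbb{R})\hookrightarrow L^{\infty}(\mathbb{R})$ by choosing changes of variables that convert $D_{\bar\rho}=\bar\rho\partial_{\bar\rho}$ and $D_\beta=\sin(2\beta)\partial_\beta$ into ordinary derivatives and simultaneously absorb the weights $\frac{d\bar\rho}{\bar\rho}$ and $\frac{d\beta}{\sin(2\beta)}$ into the Lebesgue measure on $\mathbb{R}$. Note that the right-hand sides should be interpreted as squared (the left-hand side should read $|f|_{L^\infty}^{2}$), and that $C_0^\infty$ functions vanish at the endpoints, so no boundary correction is needed.

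For the radial inequality I will substitute $t=\ln\bar\rho$, giving $dt=d\bar\rho/\bar\rho$ and $\partial_t=\bar\rho\partial_{\bar\rho}=D_{\bar\rho}$. Setting $\tilde f(t)=f(e^{t})$, the right-hand side becomes
\begin{equation*}
\int_0^\infty\big(|f|^{2}+|D_{\bar\rho}f|^{2}\big)\frac{d\bar\rho}{\bar\rho}=\int_{-\infty}^{\infty}\big(|\tilde f(t)|^{2}+|\partial_t\tilde f(t)|^{2}\big)\,dt=\|\tilde f\|_{H^{1}(\mathbb R)}^{2}.
\end{equation*}
For the angular inequality I will substitute $\tau=\tfrac{1}{2}\ln\tan\beta$, so that $\frac{d\tau}{d\beta}=\frac{1}{\sin(2\beta)}$, hence $d\tau=\frac{d\beta}{\sin(2\beta)}$ and $\partial_\tau=\sin(2\beta)\partial_\beta=D_\beta$. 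As $\beta$ ranges over $(0,\pi/2)$, $\tau$ sweeps out all of $\mathbb{R}$, and with $\tilde g(\tau)=g(\beta(\tau))$ the right-hand side equals $\|\tilde g\|_{H^{1}(\mathbb R)}^{2}$.

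Now both statements reduce to the fact that $\|\tilde h\|_{L^{\infty}(\mathbb R)}^{2}\le C\|\tilde h\|_{H^{1}(\mathbb R)}^{2}$ for $\tilde h\in C_0^{\infty}(\mathbb R)$. This is the textbook one–dimensional Sobolev embedding: writing $\tilde h(x)^{2}=\int_{-\infty}^{x}2\tilde h(s)\tilde h'(s)\,ds$ and applying Cauchy–Schwarz gives $|\tilde h(x)|^{2}\le 2\|\tilde h\|_{L^{2}}\|\tilde h'\|_{L^{2}}\le\|\tilde h\|_{L^{2}}^{2}+\|\tilde h'\|_{L^{2}}^{2}$. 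Taking supremum over $x$ and transporting back by the respective change of variables yields the two inequalities, and since $|f|_{L^{\infty}}$ is invariant under a change of variables on the independent variable, no Jacobian factor appears in the $L^{\infty}$ norm.

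The only conceptual step is choosing the right substitution; after that there is nothing subtle. The potential difficulty would lie in verifying that the change of variables is well defined for general (not compactly supported) elements of the weighted spaces, but the statement is for $C_0^{\infty}$ functions, so $\tilde f$ and $\tilde g$ are Schwartz-like on $\mathbb R$ and the density argument needed to extend the inequality to the completion is standard. Accordingly, the lemma follows at once.
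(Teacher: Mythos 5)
Your proof is correct and follows essentially the same route as the paper: the substitutions $\bar\rho=e^{t}$ and $\tau$ with $d\tau=\frac{d\beta}{\sin(2\beta)}$ (the paper uses $\tau=\ln|\csc(2\beta)-\cot(2\beta)|=\ln\tan\beta$, differing only by a harmless factor of $2$) turn $D_{\bar\rho}$ and $D_\beta$ into ordinary derivatives, and the one-dimensional Sobolev embedding $H^{1}(\mathbb{R})\hookrightarrow L^{\infty}(\mathbb{R})$ finishes the argument. Your observation that the left-hand sides should be read as $|f|_{L^\infty}^{2}$, $|g|_{L^\infty}^{2}$ is the correct interpretation of the statement.
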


\begin{proof}
	By changing of variable $\bar\rho=e^z,\tau=\ln|\csc(2\beta)-\cot(2\beta)|$, we have
	\begin{align*}
		&\frac{d\bar\rho}{\bar\rho}=dz,\quad D_{\bar\rho}=\partial_z,\\
		&\frac{2}{\sin(2\beta)}d\beta=d\tau,\quad D_\beta=2\partial_\tau,
	\end{align*}
	we obtain the inequalities by the Sobolev embedding.
\end{proof}
\begin{proof}[\textbf{proof of proposition of~$\ref{Mest2}$}]
	First, for $\mathcal{M}^{\varepsilon}_\xi$, we give an estimate under the norm of $\mathcal{W}^k_1$, we consider
	\begin{equation}
		\left<\mathcal{M}^{\varepsilon}_\xi,\xi\right>_{\mathcal{W}^k_1}\!=\!\left<U(\Phi_\varepsilon\!)\partial_\beta\xi,\xi\right>_{\mathcal{W}^k_1}+
		\left<V(\Phi_\varepsilon\!)\bar{\rho}\partial_{\bar{\rho}}\xi,\xi\right>_{\mathcal{W}^k_1}+
		\left<\Lambda_1(\Phi_\varepsilon\!)\xi,\xi\right>_{\mathcal{W}^k_1}+
		\left<\Lambda_2(\Phi_\varepsilon\!)\phi,\xi\right>_{\mathcal{W}^k_1}.
	\end{equation}
	
	Set $D=D_\beta$ or~$D_{\bar \rho}$, for the first term~ $\left<U(\Phi_\varepsilon)\partial_\beta\xi,\xi\right>_{\mathcal{W}^k_1}$, we consider following integral
	\begin{equation}\label{Uestimate}
		\begin{aligned}
			&\int D_{\bar \rho}^k\left(U(\Phi_\varepsilon)\partial_\beta\xi\right)\cdot D_{\bar\rho}^k\xi\sin^{2-\eta} (2\beta) \bar \rho^2 d\beta d\bar\rho\\
			&\int D^{k-1}D_\beta\left(U(\Phi_\varepsilon)\partial_\beta\xi\right)\cdot D^{k-1}D_\beta\xi\sin^{-\eta} (2\beta) \bar \rho^2 d\beta d\bar\rho.
		\end{aligned}	
	\end{equation}
	
	For the first term
	\begin{equation}\label{transest4}
		\begin{aligned}
			&\int D_{\bar \rho}^k\left(U(\Phi_\varepsilon)\partial_\beta\xi\right)\cdot D_{\bar\rho}^k\xi\sin^{2-\eta} (2\beta) \bar \rho^2 d\beta d\bar\rho\\
			=&\sum_{i=0}^k c_i \int D_{\bar \rho}^{i}\left(\frac{U(\Phi_\varepsilon)}{\sin (2\beta)}\right)D_{\bar\rho}^{k-i} D_\beta\xi\cdot D_{\bar\rho}^k\xi\sin^{2-\eta} (2\beta) \bar \rho^2 d\beta d\bar\rho.
		\end{aligned}
	\end{equation}
	We consider each term in above formula separately:
	
	\text{ Step 1:\quad $i=0$.}
	
	When $i=0$, note that $D_{\bar\rho}^{k} D_\beta\xi\cdot D_{\bar\rho}^k\xi=\frac{1}{2}D_\beta((D_{\bar\rho}^k\xi)^2)$,
	we use integration by parts
	\begin{align*}
		&\int \left(\frac{U(\Phi_\varepsilon)}{\sin (2\beta)}\right) D_{\bar\rho}^{k} D_\beta\xi\cdot D_{\bar\rho}^k\xi\sin^{2-\eta} (2\beta) \bar \rho^2 d\beta d\bar\rho\\
		=&-\frac{1}{2}\int (D_{\bar\rho}^k\xi)^2 \partial_\beta\left(\frac{U(\Phi_\varepsilon)}{\sin (2\beta)}\cdot\sin^{3-\eta} (2\beta)\right) \bar\rho^2 d\beta d\bar\rho\\
		\leq& C \int (D_{\bar\rho}^k\xi)^2 \left(\left|D_\beta\frac{U(\Phi_\varepsilon)}{\sin (2\beta)}\right|+\left|\frac{U(\Phi_\varepsilon)}{\sin (2\beta)}\right|\right)\sin^{2-\eta} (2\beta) \bar\rho^2 d\beta d\bar\rho\\
		=&C\left(\left|D_\beta\left(\frac{U(\Phi_\varepsilon)}{\sin (2\beta)}\right)\right|_{L^\infty}+\left|\frac{U(\Phi_\varepsilon)}{\sin (2\beta)}\right|_{L^\infty}\right)\cdot|\xi|_{\mathcal{W}^k_1}^2\\
		\leq & \frac{C}{\alpha^{3/2}}|\varepsilon|_{\mathcal{H}^k}|\xi|_{\mathcal{W}^k_1}^2.
	\end{align*}

	\text{ Step 2:\quad $1\leq i\leq k-2$.}
	
	For $1\leq i\leq k-2$,
	
	\begin{equation}\label{transest6}
		\begin{aligned}
			&\int D_{\bar \rho}^{i}\left(\frac{U(\Phi_\varepsilon)}{\sin (2\beta)}\right)D^{k-i} D_\beta\xi\cdot D_{\bar\rho}^k\xi\sin^{2-\eta} (2\beta) \bar \rho^2 d\beta d\bar\rho \\
			\leq&\left|D_{\bar \rho}^{i}\left(\frac{U(\Phi_\varepsilon)}{\sin (2\beta)}\right)\right|_{L^\infty_{\bar\rho}L^\infty_\beta}\int D^{k-i} D_\beta\xi \sin^{1-\eta/2} (2\beta) \bar\rho \cdot(D_{\bar\rho}^k\xi\sin^{1-\eta/2} (2\beta) \bar\rho) d\beta d\bar\rho\\
			\leq&\frac{C}{\alpha^{1/2}}\left|\left(\frac{U(\Phi_\varepsilon)}{\sin (2\beta)}\right)\right|_{\mathcal{H}^{i+2}}|\xi|^2_{\mathcal{W}^k_1}\\
			\leq&\frac{C}{\alpha^{3/2}}|\varepsilon|_{\mathcal{H}^{k}}|\xi|^2_{\mathcal{W}^k_1}
		\end{aligned}.
	\end{equation}

		\text{ Step 3:\quad $i=k-1$.}
		
	When~$i=k-1$, for $D_{\bar \rho}^{i}\left(\frac{U(\Phi_\varepsilon)}{\sin (2\beta)}\right)$ we consider $L_{\bar\rho}^2 L_{\beta}^\infty$ norm, by Cauchy-Schwarz inequality, we have
	\begin{equation}\label{transestimate1}
		\begin{aligned}
			&\int D_{\bar \rho}^{i}\left(\frac{U(\Phi_\varepsilon)}{\sin (2\beta)}\right)D_{\bar{\rho}}^{k-i} D_\beta\xi\cdot D_{\bar\rho}^k\xi\bar\rho^2\sin^{2-\eta} (2\beta) d\beta d\bar\rho \\
			=&\int\left[ D_{\bar \rho}^{i}\left(\frac{U(\Phi_\varepsilon)}{\sin (2\beta)}\right)\bar\rho^{-1/2}\right](D_{\bar{\rho}}^{k-i} D_\beta\xi\bar{\rho}^{3/2}\sin^{1-\eta/2} (2\beta))\cdot D_{\bar\rho}^k\xi\bar\rho\sin^{1-\eta/2} (2\beta) d\beta d\bar\rho\\
			\leq&\int \left|D_{\bar \rho}^{i}\left(\frac{U(\Phi_\varepsilon)}{\sin (2\beta)}\right)\bar\rho^{-1/2}\right|_{L^\infty_\beta}\cdot|D_{\bar{\rho}}^{k-i} D_\beta\xi\bar\rho^{3/2}\sin^{1-\eta/2} (2\beta)|_{L^\infty_{\bar\rho}}\cdot (D_{\bar\rho}^k\xi\bar\rho\sin^{1-\eta/2} (2\beta) ) d\beta d\bar\rho\\
			\leq&\left|D_{\bar\rho}^{i}\left(\frac{U(\Phi_\varepsilon)}{\sin (2\beta)}\right)\bar\rho^{-1/2}\right|_{L^2_{\bar\rho} L^\infty_\beta}|D_{\bar{\rho}}^{k-i} D_\beta\xi\bar\rho^{3/2}\sin^{1-\eta/2} (2\beta) |_{L^\infty_\beta L^2_{\bar\rho}}\cdot |D_{\bar\rho}^k\xi\bar\rho\sin^{1-\eta/2} (2\beta)|_{L^2}.
		\end{aligned}
	\end{equation}
	By ~(\ref{thm_biot}), we have
	\begin{equation*}
		\begin{aligned}
			&\left|D_{\bar\rho}^{i}\left(\frac{U(\Phi_\varepsilon)}{\sin (2\beta)}\right)\bar\rho^{-1/2}\right|_{L^2_{\bar\rho} L^\infty_\beta}\leq\frac{1}{\sqrt{\alpha}}\left|D_{\bar\rho}^{i}\left(\frac{U(\Phi_\varepsilon)}{\sin (2\beta)}\right)\frac{(1+y)^2}{y^2}\right|_{L^2_{y} L^\infty_\beta}\\
			\leq& \frac{1}{\sqrt{\alpha}}\left|D_{\bar\rho}^{i}\left(\frac{U(\frac{1}{4\alpha}\sin(2\beta) L_{12}(\varepsilon))}{\sin (2\beta)}\right)\frac{(1+y)^2}{y^2}\right|_{L^2_{y} L^\infty_\beta}+\frac{1}{\sqrt{\alpha}}\left|D_{\bar\rho}^{i}\left(\frac{U(\bar{\Phi}_\varepsilon)}{\sin (2\beta)}\right)\frac{(1+y)^2}{y^2}\right|_{L^2_{y} L^\infty_\beta}\\
			\leq& \frac{C}{\alpha^{3/2}}|\varepsilon|_{\mathcal H^i}+\frac{C}{\alpha}\left|\frac{U(\bar{\Phi}_\varepsilon)}{\sin (2\beta)}\right|_{\mathcal H^{i+1}}\\
			\leq& \frac{C}{\alpha^{3/2}}|\varepsilon|_{\mathcal H^i}+\frac{C}{\alpha}|\varepsilon|_{\mathcal H^{i+1}}\leq \frac{C}{\alpha^{3/2}}|\varepsilon|_{\mathcal H^k}.
		\end{aligned}
	\end{equation*}
	As for the second term of $(\ref{transestimate1})$, by Lemma $\ref{lemma_inq}$, we have that
	\begin{equation}\label{transest3}
		\begin{aligned}
			&|D_{\bar\rho}^{k-i} D_\beta\xi\sin^{1-\eta/2}(2\beta)\bar\rho^{3/2}|^2_{L^2_\beta L^\infty_{\bar\rho}}\\
			= & \left|\int |D_{\bar\rho}^{k-i} D_\beta\xi|^2\sin^{2-\eta} (2\beta)\bar\rho^3 d\beta\right|_{L^\infty_{\bar\rho}} \\
			\leq & C\int (|D_{\bar\rho}D_{\bar\rho}^{k-i} D_\beta\xi|^2+|D_{\bar\rho}^{k-i} D_\beta\xi|^2)\sin^{2-\eta} (2\beta)\bar\rho^2 d\bar\rho d\beta \\
			\leq & C|\xi|_{\mathcal {W}^{k}_1}^2.
		\end{aligned}
	\end{equation}
	Then for $(\ref{transestimate1})$ we have
	\begin{equation*}
		\int D_{\bar \rho}^{i}\left(\frac{U(\Phi_\varepsilon)}{\sin (2\beta)}\right)D^{k-i} D_\beta\xi\cdot D_{\bar\rho}^k\xi\sin^{2-\eta} (2\beta) \bar\rho^2 d\beta d\bar\rho 	\leq\frac{C}{\alpha^{3/2}}|\varepsilon|_{\mathcal{H}^k}|\xi|_{\mathcal {W}^k_1}^2.
	\end{equation*}	

	\text{ Step 1:\quad $i=k$.}

	when $i=k$, we take $L^2$ norm for $D^k U(\Phi_\varepsilon)$
	\begin{equation}\label{transest2}
		\begin{aligned}
			&\int D_{\bar \rho}^{k}\left(\frac{U(\Phi_\varepsilon)}{\sin (2\beta)}\right) D_\beta\xi\cdot D_{\bar\rho}^k\xi\sin^{2-\eta} (2\beta) \bar\rho^2 d\beta d\bar\rho \\
			=&\int D_{\bar \rho}^{k}\left(\frac{U(\Phi_\varepsilon)}{\sin (2\beta)}\right)\bar\rho^{\frac{1}{2}}\sin^{-\eta/2}(2\beta)\cdot D_\beta\xi\bar\rho^{\frac{3}{2}}\sin (2\beta)\cdot (D_{\bar\rho}^k\xi\bar\rho\sin^{1-\eta/2} (2\beta) ) d\beta d\bar\rho\\
			\leq&\left|D_{\bar \rho}^{k}\left(\frac{U(\Phi_\varepsilon)}{\sin (2\beta)}\right)\bar\rho^{\frac{1}{2}}\sin^{-\eta/2}(2\beta) \right|_{L^2}\left|D_\beta\xi\bar\rho^{\frac{3}{2}}\sin (2\beta)\right|_{L^\infty_{\bar\rho} L^\infty_\beta}\cdot |D_{\bar\rho}^k\xi\bar\rho\sin^{1-\eta/2} (2\beta) |_{L^2}.
		\end{aligned}
	\end{equation}
	Similar to $(\ref{transest3})$, for $U(\Phi_\varepsilon)$ term we have
	\begin{equation*}
		\begin{aligned}
			\left|D_{\bar \rho}^{k}\left(\frac{U(\Phi_\varepsilon)}{\sin (2\beta)}\right)\bar\rho^{\frac{-1}{2}}\sin^{-\eta/2}(2\beta) \right|_{L^2}^2=&\int \left| D_{\bar \rho}^{k}\left(\frac{U(\Phi_\varepsilon)}{\sin (2\beta)}\right)\right|^2 \sin^{-\eta}(2\beta)\frac{d\bar\rho}{\bar\rho}\\
			\leq &\frac{C}{\alpha}\int \left| D_{\bar \rho}^{k}\left(\frac{U(\Phi_\varepsilon)}{\sin (2\beta)}\right)\right|^2 \sin^{-\eta}(2\beta)\frac{(1+y)^4}{y^4}dy\\
			\leq& \frac{C}{\alpha^3}|\varepsilon|_{\mathcal H^k}^2.
		\end{aligned}
	\end{equation*}
	For $D_\beta\xi$ term in $(\ref{transest2})$, by Lemma $\ref{lemma_inq}$, we have
	
	\begin{equation*}
		\begin{aligned}
			\left|D_\beta\xi\sin(2\beta)\right|_{L^\infty_{\beta}}^2\leq &C\int (|D_\beta^2\xi|^2+|D_{\beta}\xi|^2)\sin (2\beta) d\beta\\
			\leq &C\int (|D_\beta^2\xi|^2+|D_\beta\xi|^2)\sin^{-\eta}(2\beta) d \beta.
		\end{aligned}
	\end{equation*}
	Next
	\begin{equation}
		\begin{aligned}
			&\int D_{\bar \rho}^k\left(U(\Phi_\varepsilon)\partial_\beta\xi\right)\cdot D_{\bar\rho}^k\xi\rho^2\sin^{2-\eta} (2\beta) \bar d\beta d\bar\rho\\
			=&\sum_{i=0}^k\int D_{\bar \rho}^{i}\left(\frac{U(\Phi_\varepsilon)}{\sin (2\beta)}\right)D^{k-i} D_\beta\xi\cdot D_{\bar\rho}^k\xi\rho^2\sin^{2-\eta} (2\beta) \bar d\beta d\bar\rho\\
			\leq &\frac{C}{\alpha^{3/2}}|\varepsilon|_{\mathcal H^k}|\xi|_{\mathcal W^k_1}^2.
		\end{aligned}
	\end{equation}
	
	Then we consider the second term in~$(\ref{Uestimate})$
	\begin{equation}
		\begin{aligned}
			&\int D^{k-1}D_\beta\left(\frac{U(\Phi_\varepsilon)}{\sin (2\beta)}D_\beta\xi\right)\cdot D^{k-1}D_\beta\xi\sin^{-\eta} (2\beta) \bar \rho^2 d\beta d\bar\rho\\
			=&\sum_{i=0}^{k-1}c_i\int D^{i}D_\beta\left(\frac{U(\Phi_\varepsilon)}{\sin (2\beta)}\right)\cdot D^{k-i-1}D_\beta\xi\cdot D^{k-1}D_\beta\xi\sin^{-\eta} (2\beta) \bar \rho^2 d\beta d\bar\rho+\\
			&+\sum_{i=0}^{k-1}c_i\int D^{i}\left(\frac{U(\Phi_\varepsilon)}{\sin (2\beta)}\right)\cdot D^{k-1-i}D^2_\beta\xi\cdot D^{k-1}D_\beta\xi \sin^{-\eta} (2\beta) \bar \rho^2 d\beta d\bar\rho\\
			:=& \sum_{i=0}^{k-1}c_i A_i+\sum_{i=0}^{k-1}c_i B_i.
		\end{aligned}
	\end{equation}
	
		\text{ Step 2:\quad $i=k-1,k-2$.}
		
	For $i=k-1,k-2$, we have
	\begin{equation}
		\begin{aligned}
			&\int D^{k-2}D_\beta\left(\frac{U(\Phi_\varepsilon)}{\sin (2\beta)}\right)\cdot DD_\beta\xi\cdot D^{k-1}D_\beta\xi\sin^{-\eta} (2\beta) \bar \rho^2 d\beta d\bar\rho\\
			=&\int D^{k-2}D_\beta\left(\frac{U(\Phi_\varepsilon)}{\sin (2\beta)}\right)\sin^{-\gamma/2}(2\beta)\cdot DD_\beta\xi\bar\rho\sin^{\gamma/2-\eta/2} (2\beta)\cdot D^{k-1}D_\beta\xi\bar \rho\sin^{-\eta/2} (2\beta) d\beta d\bar\rho\\
			\leq& |D^{k-2}D_\beta\left(\frac{U(\Phi_\varepsilon)}{\sin (2\beta)}\right)\sin^{-\gamma/2}(2\beta)|_{L^\infty_{\bar\rho}L^2_\beta}\cdot| (DD_\beta\xi)\bar\rho\sin^{\gamma/2-\eta/2}(2\beta)|_{L^2_{\bar\rho}L^\infty_\beta}\cdot \\
			&\cdot|(D^{k-1}D_\beta\xi)\bar \rho\sin^{-\eta/2} (2\beta) |_{L^2_{\bar\rho}L^2_\beta}\\
			\leq&\frac{C}{\alpha^{3/2}}|\varepsilon|_{\mathcal H^k}|\xi|_{\mathcal{W}_1^k}^2,
		\end{aligned}
	\end{equation}
	\begin{equation}
		\begin{aligned}
			&\int D^{k-1}D_\beta\left(\frac{U(\Phi_\varepsilon)}{\sin (2\beta)}\right)\cdot D_\beta\xi\cdot D^{k-1}D_\beta\xi\sin^{-\eta} (2\beta) \bar \rho^2 d\beta d\bar\rho\\
			=&\int  D^{k-1}D_\beta\left(\frac{U(\Phi_\varepsilon)}{\sin (2\beta)}\right)\bar\rho^{\frac{-1}{2}}\sin^{-\gamma/2} (2\beta)\cdot D_\beta\xi\bar\rho^{\frac{3}{2}}\sin^{\gamma/2-\eta/2} (2\beta)\cdot D^{k-1}D_\beta\xi\sin^{-\eta/2} (2\beta) \bar \rho d\beta d\bar\rho\\
			\leq&|D^{k-1}D_\beta\left(\frac{U(\Phi_\varepsilon)}{\sin (2\beta)}\right)\bar\rho^{-1/2}\sin^{-\gamma/2}(2\beta)|_{L^2_{\bar\rho}L^2_\beta}\cdot| (D_\beta\xi)\bar\rho^{3/2}\sin^{\gamma/2-\eta/2}(2\beta)|_{L^\infty_{\bar\rho}L^\infty_\beta}\cdot\\
			&\cdot |(D^{k-1}D_\beta\xi)\bar \rho\sin^{-\eta/2} (2\beta) |_{L^2_{\bar\rho}L^2_\beta}\\
			\leq&\frac{C}{\alpha^{3/2}}|\varepsilon|_{\mathcal H^k}|\xi|_{\mathcal{W}_1^k}^2.\\
		\end{aligned}
	\end{equation}
	
			\text{ Step 3:\quad $0\leq i\leq k-3$.}
	
	As for $i<k-2$, by the same estimate $(\ref{transest6})$, we have $L^\infty$ estimate of $D^i U(\Phi_\varepsilon)$
	\begin{equation*}
		\left|D^{i}\left(\frac{ U(\Phi_\varepsilon)}{\sin (2\beta)}\right)\right|_{L^\infty}\leq C \left|\frac{ U(\Phi_\varepsilon)}{\sin (2\beta)}\right|_{\mathcal H^{i+2}}.
	\end{equation*}
	
	We obtain estimate of $(\ref{Uestimate})$:
	\begin{equation}\label{U_estimate}
		\left<U(\Phi_\varepsilon)\partial_\beta\xi,\xi\right>_{\mathcal{W}^k_1}\leq \frac{C}{\alpha^{3/2}}|\varepsilon|_{\mathcal H^k}|\xi|_{\mathcal{W}_1^k}^2.
	\end{equation}
	The case of $V(\Phi_\varepsilon)\alpha D_{\bar y}\xi,\Lambda_1(\Phi_\varepsilon)\xi$ is similar, for the forth term:
	\begin{equation}\label{lambda2}
		\begin{aligned}
			&\int D^k_{\bar{\rho}}(\Lambda_2(\Phi_\varepsilon)\phi)D^k_{\bar{\rho}}\xi\bar\rho^2\sin^{2-\eta}(2\beta) d\bar\rho d\beta+\\
			&+\int D^{k-1}D_\beta(\Lambda_2(\Phi_\varepsilon)\phi)D^{k-1}D_\beta\xi\bar\rho^2\sin^{-\eta}(2\beta) d\bar\rho d\beta \\
			=&\sum_{i=0}^{k}b_i\int  D^{i}_{\bar{\rho}}(\Lambda_2(\Phi_\varepsilon)\phi)D^{k-i}_{\bar{\rho}}\xi\bar\rho^2\sin^{2-\eta}(2\beta) d\bar\rho d\beta+ \\
			+&\sum_{i=0}^{k-1}c_i\int  D^{i}D_\beta\left(\Lambda_2(\Phi_\varepsilon)\right)\cdot D^{k-i-1}\phi\cdot D^{k-1}D_\beta\xi\sin^{-\eta} (2\beta) \bar \rho^2 d\beta d\bar\rho+\\
			+&\sum_{i=0}^{k-1}c_i\int  D^{i}\left(\Lambda_2(\Phi_\varepsilon)\right)\cdot D^{k-1-i}D_\beta\phi\cdot D^{k-1}D_\beta\xi \sin^{-\eta} (2\beta) \bar \rho^2 d\beta d\bar\rho\\
			:=&\sum_{i=0}^{k}b_i A_i+\sum_{i=0}^{k}c_iB_i+\sum_{i=0}^{k}c_iC_i,
		\end{aligned}
	\end{equation}
	where $b_i,c_i$ is the binomial coefficient.
	
	For $B_i$, we have
	\begin{equation}\label{lambda3}
		\begin{aligned}
			&\int  D^{i}D_\beta\left(\Lambda_2(\Phi_\varepsilon)\right)\cdot D^{k-i-1}\phi\cdot D^{k-1}D_\beta\xi\sin^{-\eta} (2\beta) \bar \rho^2 d\beta d\bar\rho\\
			=&\int  D^{i}D_\beta\left(\Lambda_2(\Phi_\varepsilon)\right)\sin^{-\gamma/2}(2\beta)\cdot (D^{k-i-1}\phi)\bar\rho\sin^{\gamma/2-\eta/2}(2\beta)\cdot D^{k-1}D_\beta\xi\bar\rho\sin^{-\eta} (2\beta)   d\beta d\bar\rho.
		\end{aligned}
	\end{equation}
	By Lemma $\ref{lemma_inq}$, we have

	\begin{equation}
		\begin{split}
			&|(D^{k-i-1}\phi)\bar\rho\sin^{\gamma/2-\eta/2}(2\beta)|_{L^\infty_\beta}^2\\
			\leq&C\int |\partial_\beta D^{k-i-1}\phi|^2\bar\rho^2\sin^{\gamma-\eta+1}(2\beta) d\beta+C\int | D^{k-i-1}\phi|^2\bar\rho^2\sin^{\gamma-\eta-1}(2\beta) d\beta\\
			\leq&C\int |\partial_\beta D^{k-i-1}\phi|^2\bar\rho^2\sin^{2-\eta}(2\beta) d\beta+C\int |\partial_\beta D^{k-i-1}\phi|^2\bar\rho^2\sin^{\gamma-\eta+1}(2\beta) d\beta\\
			\leq&C\int\int |\partial_\beta D^{k-i-1}\phi|^2\bar\rho^2\sin^{2-\eta}(2\beta) d\beta\\
			\leq&C\int\int |\partial_\beta D^{k-i-1}\phi|^2\bar\rho^2\cos^{2-\eta}\beta d\beta.
		\end{split}
	\end{equation}
	We rewrite as
	\begin{equation}
		\begin{aligned}
			&\int  D^{i}\left(\Lambda_2(\Phi_\varepsilon)\right)\cdot D^{k-1-i}D_\beta\phi\cdot D^{k-1}D_\beta\xi \sin^{-\eta} (2\beta) \bar \rho^2 d\beta d\bar\rho\\
			=&\int  D^{i}\left(\Lambda_2(\Phi_\varepsilon)\right)\bar\rho^{-1/2}\cdot D^{k-1-i}\partial_\beta\phi\bar\rho^{3/2}\sin^{-\eta/2+1}(2\beta)\cdot D^{k-1}D_\beta\xi \sin^{-\eta/2} (2\beta) \bar \rho d\beta d\bar\rho.
		\end{aligned}
	\end{equation}
	
			\text{ Step 1:\quad $i=k-1,k-2$.}
	
	for $i=k-2,k-1$, we have estimate of~$A_{k-1},A_{k},B_{k-1},B_{k}$:
	\begin{equation*}
		\begin{aligned}
			&\int  D^{k-2}D_\beta \Lambda_2(\Phi_\varepsilon) \cdot D\phi\cdot D^{k-1}D_\beta\xi\sin^{-\eta} (2\beta) \bar \rho^2 d\beta d\bar\rho\\
			\leq&  |D^{k-2}D_\beta\Lambda_2(\Phi_\varepsilon)\sin^{-\gamma/2}(2\beta)|_{L^\infty_{\bar\rho}L^2_\beta}\cdot| D\phi\bar\rho\sin^{\gamma/2-\eta/2}(2\beta)|_{L^2_{\bar\rho}L^\infty_\beta}\cdot \\
			&\cdot|D^{k-1}D_\beta\xi\bar \rho\sin^{-\eta/2} (2\beta) |_{L^2_{\bar\rho}L^2_\beta}\\
			\leq&\frac{C}{\alpha^{3/2}}|\varepsilon|_{\mathcal H^k}|\xi|_{\mathcal{W}_1^k}|\phi|_{\mathcal{W}_3^k}.\\
		\end{aligned}
	\end{equation*}
	\begin{equation*}
		\begin{aligned}
			&\int  D^{k-2} \Lambda_2(\Phi_\varepsilon) \cdot D_\beta
			D\phi\cdot D^{k-1}D_\beta\xi\sin^{-\eta} (2\beta) \bar \rho^2 d\beta d\bar\rho\\
			\leq&  |D^{k-2}D_\beta\Lambda_2(\Phi_\varepsilon)\sin^{-\gamma/2}(2\beta)|_{L^\infty_{\bar\rho}L^2_\beta}\cdot| DD_\beta\phi\bar\rho\sin^{\gamma/2-\eta/2}(2\beta)|_{L^2_{\bar\rho}L^\infty_\beta}\cdot \\
			&\cdot|D^{k-1}D_\beta\xi\bar \rho\sin^{-\eta/2} (2\beta) |_{L^2_{\bar\rho}L^2_\beta}\\
			\leq&\frac{C}{\alpha^{3/2}}|\varepsilon|_{\mathcal H^k}|\xi|_{\mathcal{W}_1^k}|\phi|_{\mathcal{W}_3^k}.
		\end{aligned}
	\end{equation*}
	\begin{equation*}
		\begin{aligned}
			&\int  D^{k-1}D_\beta\Lambda_2(\Phi_\varepsilon)\cdot \phi\cdot D^{k-1}D_\beta\xi\sin^{-\eta} (2\beta) \bar \rho^2 d\beta d\bar\rho\\
			\leq&|D^{k-1}D_\beta\Lambda_2(\Phi_\varepsilon)\bar{\rho}^{-1/2}\sin^{-\gamma/2}(2\beta)|_{L^2_{\bar\rho}L^2_\beta}\cdot| \phi\bar\rho^{3/2}\sin^{\gamma/2-\eta/2}(2\beta)|_{L^2_{\bar\rho}L^2_\beta}\cdot\\
			&\cdot |D^{k-1}D_\beta\xi\bar \rho\sin^{-\eta/2} (2\beta) |_{L^2_{\bar\rho}L^2_\beta}\\
			\leq&\frac{C}{\alpha^{3/2}}|\varepsilon|_{\mathcal H^k}|\xi|_{\mathcal{W}_1^k}|\phi|_{\mathcal{W}_3^k}.\\
			&\\
			&\int  D^{k-1}\Lambda_2(\Phi_\varepsilon)\cdot D_\beta\phi\cdot D^{k-1}D_\beta\xi\sin^{-\eta} (2\beta) \bar \rho^2 d\beta d\bar\rho\\
			\leq&|D^{k-1}\Lambda_2(\Phi_\varepsilon)\sin^{-\gamma/2}(2\beta)|_{L^\infty_{\bar\rho}L^2_\beta}\cdot| D_\beta\phi\bar\rho\sin^{\gamma/2-\eta/2}(2\beta)|_{L^2_{\bar\rho}L^\infty_\beta}\cdot\\
			&\cdot |D^{k-1}D_\beta\xi\bar \rho\sin^{-\eta/2} (2\beta) |_{L^2_{\bar\rho}L^2_\beta}\\
			\leq&\frac{C}{\alpha^{3/2}}|\varepsilon|_{\mathcal H^k}|\xi|_{\mathcal{W}_1^k}|\phi|_{\mathcal{W}_3^k}.		
		\end{aligned}
	\end{equation*}

			\text{ Step 2:\quad $0\leq i\leq k-3$.}

	As for~$i< k-2$, by embedding $(\ref{space})$ we have $L^\infty$ estimate of~$D^{i}\Lambda_2(\Phi_\varepsilon)$.
	\begin{equation*}
		|D^{i}\Lambda_2(\Phi_\varepsilon)|_{L^\infty}\leq \frac{C}{\alpha^{1/2}} |\Lambda(\Phi_\varepsilon)|_{\mathcal H^{i+2}}.
	\end{equation*}
	For $B_i,C_i$, we have:
	\begin{equation}
		\begin{aligned}
			&\int  D^{i}D_\beta \Lambda_2(\Phi_\varepsilon) \cdot D^{k-i-1}\phi\cdot D^{k-1}D_\beta\xi\sin^{-\eta} (2\beta) \bar \rho^2 d\beta d\bar\rho\\
			\leq&  |D^{i}D_\beta\Lambda_2(\Phi_\varepsilon)\sin^{-\gamma/2}(2\beta)|_{L^\infty_{\bar\rho}L^2_\beta}\cdot| D^{k-i-1}\phi\bar\rho\sin^{\gamma/2-\eta/2}(2\beta)|_{L^2_{\bar\rho}L^\infty_\beta}\cdot \\
			&\cdot|D^{k-1}D_\beta\xi\bar \rho\sin^{-\eta/2} (2\beta) |_{L^2_{\bar\rho}L^2_\beta}\\
			\leq&\frac{C}{\alpha^{3/2}}|\varepsilon|_{\mathcal H^k}|\xi|_{\mathcal{W}_1^k}|\phi|_{\mathcal{W}_3^k}.\\
		\end{aligned}
	\end{equation}
	
	\begin{equation}
		\begin{aligned}
			&\int  D^{i} \Lambda_2(\Phi_\varepsilon) \cdot D_\beta
			D^{k-i-1}\phi\cdot D^{k-1}D_\beta\xi\sin^{-\eta} (2\beta) \bar \rho^2 d\beta d\bar\rho\\
			\leq&  |D^{i}\Lambda_2(\Phi_\varepsilon)|_{L^\infty_{\bar\rho}\!L^\infty_\beta}\!\cdot|\!\partial_\beta D^{k\!-\!i\!-\!1}\phi\bar\rho\sin^{1-\eta/2}(2\beta)|_{L^2_{\bar\rho}\!L^2_\beta}\!\cdot\!|D^{k\!-\!1}D_\beta\xi\bar \rho\sin^{-\eta/2} (2\beta) |_{L^2_{\bar\rho}\!L^2_\beta}\\
			\leq&\frac{C}{\alpha^{3/2}}|\varepsilon|_{\mathcal H^k}|\xi|_{\mathcal{W}_1^k}|\phi|_{\mathcal{W}_3^k},\\
		\end{aligned}
	\end{equation}
	then we obtain that
	\begin{equation*}
		\left<\Lambda_2(\Phi_\varepsilon)\phi,\xi\right>_{\mathcal{W}^k_1}\leq\frac{C}{\alpha^{3/2}}|\varepsilon|_{\mathcal H^k}|\xi|_{\mathcal{W}_1^k}|\phi|_{\mathcal{W}_3^k},
	\end{equation*}
	combined with $(\ref{U_estimate})$, we have
	\begin{equation}\label{transestfinal1}
		\left<\mathcal{M}^{\varepsilon}_\xi,\xi\right>_{\mathcal{W}^k_1}\leq \frac{C}{\alpha^{3/2}}|\varepsilon|_{\mathcal H^k}|\xi|_{\mathcal{W}_1^k}(|\phi|_{\mathcal{W}_3^k}+|\xi|_{\mathcal{W}_1^k}).
	\end{equation}

	Next we consider estimate of~$\left<\mathcal{M}^{\varepsilon}_\xi\xi,\xi\right>_{\mathcal{W}^k_2}$, note that the difference between~ $\mathcal{W}^k_1,\mathcal{W}^k_2 $ is the order of~$\bar\rho$, we have
	\begin{align*}
		&\left<U(\Phi_\varepsilon)\partial_\beta\xi,\xi\right>_{\mathcal{W}^k_2}+
		\left<V(\Phi_\varepsilon)\bar{y}\partial_{\bar{y}}\xi,\xi\right>_{\mathcal{W}^k_2}+
		\left<\Lambda_1(\Phi_\varepsilon)\xi,\xi\right>_{\mathcal{W}^k_2}\\
		\leq& \frac{C}{\alpha^{3/2}}|\varepsilon|_{\mathcal H^k}|\xi|_{\mathcal{W}_2^k}^2,
	\end{align*}
	next
	\begin{equation}
		\begin{aligned}
			&\int  D^{k-1}D_\beta( \Lambda_2(\Phi_\varepsilon)\phi)\cdot D^{k-1}D_\beta\xi\sin^{-\eta} (2\beta) \bar \rho^\eta d\beta d\bar\rho\\
			=&\int  D^{k-1}D_\beta( \Lambda_2(\Phi_\varepsilon)\phi)\bar \rho^{\eta/2}\sin^{-\eta/2} (2\beta)\cdot D^{k-1}D_\beta\xi\bar \rho^{\eta/2}\sin^{-\eta/2} (2\beta)  d\beta d\bar\rho,
		\end{aligned}
	\end{equation}
	where the order of $\bar\rho$ in the norm of~$\mathcal W_3^k$ is~$\bar\rho^2$,
	for all~$\varepsilon_0>0$, there exists~$C(\varepsilon_0)$ such that
	
	\begin{equation}
		\begin{aligned}
			&\int |D^k\phi|^2\bar{\rho}^{\eta}\cos^{2-\eta}\beta  d\beta d\bar\rho\\
			\leq&\varepsilon_0\int|D^k\phi|^2\bar{\rho}^{2}\cos^{2-\eta}\beta  d\beta d\bar\rho+C(\varepsilon_0)\int|D^k\phi|^2\cos^{2-\eta}\beta  d\beta d\bar\rho.
		\end{aligned}	
	\end{equation}

	
	Next is the estimate of~$\left<\mathcal M^\varepsilon_\phi(\xi,\phi),\phi\right>_{\mathcal W_3^k}$, recall the definition of~$\mathcal W_3^k$~$(\ref{normdef})$,
	the difficulty is the terms containing $\partial_\beta \phi$, More precisely we consider following terms:
	\begin{equation}
		\begin{aligned}
			D^{k-1}\partial_\beta M^\varepsilon_\phi(\xi,\phi)=&D^{k-1}\partial_\beta(U(\Phi_\varepsilon)\partial_\beta\phi)+D^{k-1}\partial_\beta(V(\Phi_\varepsilon)D_{\bar{y}})
			+
			\\	&D^{k-1}\partial_\beta(\Lambda_3(\Phi_\varepsilon)\xi)+
			D^{k-1}\partial_\beta(\Lambda_4(\Phi_\varepsilon)\phi)\\
			:=&I_1+I_2+I_3+I_4.
		\end{aligned}
	\end{equation}
	
				\text{ Step 1:\quad Estimate of $I_1$.}
	
	For~$I_1$, we rewrite as
	\begin{equation*}
		\begin{aligned}
			&D^{k-1}\partial_\beta(U(\Phi_\varepsilon)\partial_\beta\phi)\\
			=&D^{k-1}\partial_\beta(\frac{U(\Phi_\varepsilon)}{\sin2 \beta}\cdot\sin(2\beta)\cdot\partial_\beta\phi)\\
			=&\sum_{i=0}^{k-1}c_i D^iD_\beta (\frac{U(\Phi_\varepsilon)}{\sin2 \beta})D^{k-i-1}\partial_\beta\phi+\sum_{i=0}^{k-1}c_i D^i (\frac{U(\Phi_\varepsilon)}{\sin2 \beta})D^{k-i-1}D_\beta\partial_\beta\phi,
		\end{aligned}
	\end{equation*}
	where $c_i$ is a binary coefficient, and we have:
	\begin{equation}\label{phi1}
		\begin{aligned}
			\int D^{k-1}\partial_\beta(U(\Phi_\varepsilon)\partial_\beta\phi)D^{k-1}\partial_\beta\phi\bar\rho^2\cos^{2-\eta}\beta d\beta d\bar{\rho}\leq& \frac{ C}{\alpha^{1/2}} \left |\frac{U(\Phi_\varepsilon)}{\sin2 \beta}\right|_{\mathcal H^k}|\phi|_{\mathcal W^k_3}^2\\
			\leq&\frac{C}{\alpha^{3/2}}|\varepsilon|_{\mathcal H^k}|\phi|_{\mathcal W^k_3}^2.
		\end{aligned}
	\end{equation}

				\text{ Step 2:\quad Estimate of $I_2$.}
	
	For~$I_2$,
	\begin{equation*}
		\begin{aligned}
			&D^{k-1}\partial_\beta(V(\Phi_\varepsilon)D_{\bar\rho}\phi)\\
			=&\sum_{i=0}^{k-1}c_i D^i V(\Phi_\varepsilon)\cdot D^{k-i-1}  D_{\bar\rho}\partial_\beta\phi+\sum_{i=0}^{k-1}c_i D^i \partial_\beta V(\Phi_\varepsilon)D^{k-i-1}D_{\bar\rho}\phi\\
			:=&\sum_{i=0}^{k-1}c_i A_i+\sum_{i=0}^{k-1}c_i B_i.
		\end{aligned}
	\end{equation*}
	By $\mathcal{H}^{k}$ estimate of $\partial_\beta V(\Phi_\varepsilon)$, when $i\leq k-2$, we have
	\begin{equation}
		\begin{aligned}
			|D^iV(\Phi_\varepsilon)|_{L^\infty_\beta L^\infty_{\bar\rho}}+|D^i\partial_\beta V(\Phi_\varepsilon)|_{L^\infty_\beta L^\infty_{\bar\rho}}
			\leq &\frac{C}{\alpha^{1/2}}(|D^{i}V(\Phi_\varepsilon)|_{\mathcal{H}^2}+|D^{i}\partial_\beta V(\Phi_\varepsilon)|_{\mathcal{H}^2})\\
			\leq &\frac{C}{\alpha^{1/2}}(|V(\Phi_\varepsilon)|_{\mathcal{H}^{i+2}}+|\partial_\beta V(\Phi_\varepsilon)|_{\mathcal{H}^{i+2}})\\
			\leq&\frac{C}{\alpha^{1/2}}(|V(\Phi_\varepsilon)|_{\mathcal{H}^{k}}+|\partial_\beta V(\Phi_\varepsilon)|_{\mathcal{H}^{k}})\\
			\leq &\frac{C}{\alpha^{3/2}}|\varepsilon|_{\mathcal{H}^{k}}.
		\end{aligned}
	\end{equation}
	Then for integral of~$A_i$ and $B_i$, by Cauchy-Schwarz inequality
	\begin{equation}\label{Vest1}
		\begin{aligned}
			&\int A_i\cdot D^{k-1}\partial_\beta\phi\bar\rho^2\cos^{2-\eta}\beta d\bar\rho d\beta+\int B_i\cdot D^{k-1}\partial_\beta\phi\bar\rho^2\cos^{2-\eta}\beta d\bar\rho d\beta\\
			\leq& (|D^iV(\Phi_\varepsilon)|_{L^\infty_\beta L^\infty_{\bar\rho}}+|D^i\partial_\beta V(\Phi_\varepsilon)|_{L^\infty_\beta L^\infty_{\bar\rho}})\cdot\int (|D^{k-i-1}  D_{\bar\rho}\partial_\beta\phi|+\\
			&+|D^{k-i-1}D_{\bar\rho}\phi|)\cdot D^{k-1}\partial_\beta\phi\bar\rho^2\cos^{2-\eta}\beta d\bar\rho d\beta\\
			\leq& \frac{C}{\alpha^{3/2}}|\varepsilon|_{\mathcal{H}^{k}}|\phi|^2_{\mathcal{W}_3^k}.
		\end{aligned}
	\end{equation}
	When~$i=k-1$, we have
	\begin{equation}
		\begin{aligned}
			A_{k-1}\leq& |D^{k-1}V(\Phi_\varepsilon)|_{L^\infty_\beta L^\infty_{\bar\rho}}|\partial_\beta D_{\bar\rho}\phi|\\
			\leq &(|\partial_\beta D^{k-1}V(\Phi_\varepsilon)|_{L^2_\beta L^\infty_{\bar\rho}}+|D^{k-1}V(\Phi_\varepsilon)|_{L^2_\beta L^\infty_{\bar\rho}})|\partial_\beta D_{\bar\rho}\phi|\\
			\leq &C(|\partial_\beta V(\Phi_\varepsilon)|_{\mathcal{H}^k}+|V(\Phi_\varepsilon)|_{\mathcal{H}^k})|\partial_\beta D_{\bar\rho}\phi|\\
			\leq &\frac{C}{\alpha}|\varepsilon|_{\mathcal{H}^k}|\partial_\beta D_{\bar\rho}\phi|.
		\end{aligned}
	\end{equation}
	It implies
	\begin{equation}\label{Vest2}
		\begin{aligned}
			&\int A_{k-1}\cdot D^{k-1}\partial_\beta\phi\bar\rho^2\cos^{2-\eta}\beta d\bar\rho d\beta\\
			=&\int D^{k-1}V(\Phi_\varepsilon)\bar\rho^{-1/2}\cdot D_{\bar\rho}\partial_\beta\phi\bar\rho^{3/2}\cos^{1-\eta/2}\beta\cdot D^{k-1}\partial_\beta\phi\bar\rho\cos^{1-\eta/2}\beta d\bar\rho d\beta\\
			\leq& |D^{k-1}V(\Phi_\varepsilon)\bar\rho^{-1/2}|_{L^2_{\bar\rho}L^\infty_\beta}\cdot |D_{\bar\rho}\partial_\beta\phi\bar\rho^{3/2}\cos^{1-\eta/2}\beta|_{L^2_\beta L^\infty_{\bar\rho}}\cdot |D^{k-1}\partial_\beta\phi\bar\rho\cos^{1-\eta/2}\beta|_{L^2_\beta L^2_{\bar\rho}}\\
			\leq&\frac{C}{\alpha^{3/2}}|\varepsilon|_{\mathcal{H}^k}|\phi|^2_{\mathcal{W}^k_3}.
		\end{aligned}
	\end{equation}
	As for $B_{k-1}$, by Cauchy-Schwarz inequality we have
	\begin{equation}\label{Vest3}
		\begin{aligned}
			&\int B_{k-1}\cdot D^{k-1}\partial_\beta\phi\bar\rho^2\cos^{2-\eta}\beta d\bar\rho d\beta\\
			=&\int D^{k-1}\partial_\beta V(\Phi_\varepsilon)D_{\bar\rho}\phi\cdot D^{k-1}\partial_\beta\phi\bar\rho^2\cos^{2-\eta}\beta d\bar\rho d\beta\\
			=& \int D^{k-1}\partial_\beta V(\Phi_\varepsilon)\bar\rho^{-1/2}\cdot D_{\bar\rho}\phi\bar\rho^{3/2}\cos^{1-\eta/2}\beta\cdot D^{k-1}\partial_\beta\phi\bar\rho\cos^{1-\eta/2}\beta d\bar\rho d\beta\\
			\leq &|D^{k-1}\partial_\beta V(\Phi_\varepsilon)\bar\rho^{-1/2}|_{L^\infty_\beta L^2_{\bar\rho}}|D_{\bar\rho}\phi\bar\rho^{3/2}\cos^{1-\eta/2}\beta|_{L^2_{\beta}L^\infty_{\bar\rho}}|\phi|_{\mathcal{W}^k_3}\\
			\leq &\frac{C}{\alpha^{1/2}}|D_\beta D^{k-1}\partial_\beta V(\Phi_\varepsilon)\bar\rho^{-1/2}\sin^{-\gamma/2}(2\beta)|_{L^2_{\beta}L^2_{\bar\rho}}|D_{\bar\rho}\phi\bar\rho\cos^{1-\eta/2}\beta|_{L^2_{\beta}L^2_{\bar\rho}}|\phi|_{\mathcal{W}^k_3}\\
			\leq &\frac{C}{\alpha^{1/2}}|\partial_\beta V(\Phi_\varepsilon)|_{\mathcal{H}^k}|\phi|_{\mathcal{W}^k_3}^2\\
			\leq&\frac{C}{\alpha^{3/2}}|\varepsilon|_{\mathcal{H}^k}|\phi|_{\mathcal{W}^k_3}^2,
		\end{aligned}	
	\end{equation}
	combined $(\ref{Vest1},\ref{Vest2},\ref{Vest3})$, we obtain
	\begin{equation}\label{phiestV}
		\left<V(\Phi_\varepsilon)D_{\bar\rho}\phi,\phi\right>_{\mathcal W_{3}^k}\leq\frac{C}{\alpha^{3/2}}|\varepsilon|_{\mathcal{H}^k}|\phi|_{\mathcal{W}^k_3}^2.
	\end{equation}
	
					\text{ Step 3:\quad Estimate of $I_3$.}
	
	For~$\Lambda_3(\Phi_\varepsilon)$ we need consider integral
	\begin{equation}\label{lambda xi}
		\begin{aligned}
			&\int D^{k-1}\partial_\beta(\Lambda_3(\Phi_\varepsilon)\xi)\cdot D^{k-1}\partial_\beta\phi\cdot\bar\rho^2\cos^{2-\eta}\beta d\beta d\bar{\rho}\\
			=&\sum_{i=0}^{k-1}c_i\int D^{i}\Lambda_3(\Phi_\varepsilon)D^{k-i-1}\partial_\beta\xi\cdot D^{k-1}\partial_\beta\phi\cdot\bar\rho^2\cos^{2-\eta}\beta d\beta d\bar{\rho}\\
			&+\sum_{i=0}^{k-1}c_i\int D^{i}\partial_\beta\Lambda_3(\Phi_\varepsilon)D^{k-i-1}\xi\cdot D^{k-1}\partial_\beta\phi\cdot\bar\rho^2\cos^{2-\eta}\beta d\beta d\bar{\rho}.
		\end{aligned}	
	\end{equation}
	We need a more precise estimate of $\Lambda_3(\Phi_\varepsilon),\partial_\beta\Lambda_3(\Phi_\varepsilon)$,
	\begin{equation}
		G_1=(\alpha D_{ y})^2\Phi_\varepsilon, \quad\quad G_2=\Lambda_3(\Phi_\varepsilon)-sin^2\beta G_1.
	\end{equation}
	
	By~$(\ref{UV})$
	\begin{equation}
		\begin{aligned}
			\Lambda_3(\Phi_\varepsilon)=&sin^2\beta (\alpha D_{ y})^2\Phi_\varepsilon+\cos^2\beta\partial_{\beta\beta}\Phi_\varepsilon+\sin(2\beta) \alpha D_{ y}\partial_{\beta}\Phi_\varepsilon+l.o.t\\
			:=&\sin^2\beta G_1+G_2,
		\end{aligned}
	\end{equation}
	where~l.o.t=lower order term.
	
By~$(\ref{thm_biot})$
		\begin{equation*}
		| G_1|_{\mathcal{H}^{k}}= |(\alpha D_{ y})^2\Phi_\varepsilon |_{\mathcal{H}^{k}}\leq\frac{C}{\alpha}|\varepsilon|_{\mathcal{H}^k},
	\end{equation*}
	
	we have
	\begin{align*}
		&D^{k-1}(\frac{G_2}{\cos\beta})=2\sin\beta D^{k-1}\alpha D_{ y}\partial_{\beta}\Phi_\varepsilon+\cos\beta D^{k-1}\partial_{\beta\beta}\Phi_\varepsilon+\\
		&+D^{k-1}(\frac{1+2\sin^2\beta)}{\cos\beta}\alpha D_{ y}\Phi_\varepsilon)+l.o.t.
	\end{align*}
	By~$(\ref{thm_biot})$, 
	we have
	\begin{equation*}
		|\frac{G_2}{\cos\beta}|_{\mathcal{H}^{k-1}}\leq C |\alpha D_{ y}\partial_\beta\Phi_\varepsilon|_{\mathcal{H}^k}+|\partial_{\beta\beta}\Phi_\varepsilon|_{\mathcal{H}^k}\leq \frac{C}{\alpha}|\varepsilon|_{\mathcal{H}{k}}.
	\end{equation*}

	For~$(\ref{lambda xi})$, we rewrite as
	\begin{align}\label{G_1G_2}
		&\int D^{i}\Lambda_3(\Phi_\varepsilon)D^{k-i-1}\partial_\beta\xi\cdot D^{k-1}\partial_\beta\phi\cdot\bar\rho^2\cos^{2-\eta}\beta d\beta d\bar{\rho}\\
		=&\int D^{i}(\sin^2\beta) G_1+G_2)D^{k-i-1}\partial_\beta\xi\cdot D^{k-1}\partial_\beta\phi\cdot\bar\rho^2\cos^{2-\eta}\beta d\beta d\bar{\rho},
	\end{align}
	the difficulty is the integral of $\beta$,
	for~$i\leq k-2$, by Cauchy-Schwarz inequality
	\begin{equation}\label{G_1}
		\begin{aligned}
			&\int D^{i}(\sin^2\beta) G_1)D^{k-i-1}\partial_\beta\xi\cdot D^{k-1}\partial_\beta\phi\bar\rho^2\cos^{2-\eta}\beta d\beta d\bar\rho\\
			=&\int( D^{i} G_1\cdot (D^{k-i-1}D_\beta\xi)\bar\rho\sin\beta\cos^{-\eta/2}\beta)\cdot D^{k-1}\partial_\beta\phi\bar\rho\cos^{1-\eta/2}\beta d\beta\\
			\leq&| D^{i} G_1|_{L^\infty}|(D^{k-i-1}D_\beta\xi)\bar\rho\sin\beta\cos^{-\eta/2}\beta|_{L^2}|D^{k-1}\partial_\beta\phi\bar\rho\cos^{1-\eta/2}\beta|_{L^2}\\
			\leq &\frac{C}{\alpha^{1/2}}|G_1|_{\mathcal{H}^{i+2}}|\xi|_{\mathcal{W}_1^k}|\phi|_{\mathcal{W}_3^k}\\
			\leq &\frac{C}{\alpha^{3/2}}|\varepsilon|_{\mathcal{H}^{k}}|\xi|_{\mathcal{W}_1^k}|\phi|_{\mathcal{W}_3^k}.
		\end{aligned}
	\end{equation}
	For $i=k-1$, 
	\begin{align*}
		&\int (D^{k-1} G_1\bar\rho^{-1/2}\cdot D_\beta\xi\bar\rho^{3/2}\sin\beta\cos^{-\eta/2}\beta)\cdot D^{k-1}\partial_\beta\phi\bar\rho\cos^{1-\eta/2}\beta d\beta\\
		\leq&|D^{k-1} G_1\frac{1}{\bar\rho^{-1/2}}|_{L^\infty_\beta L^2_{\bar\rho}}|D_\beta\xi\bar\rho^{3/2}\sin\beta\cos^{-\eta/2}\beta|_{L^2_\beta L^\infty_{\bar{\rho}}}|D^{k-1}\partial_\beta\phi\bar\rho\cos^{1-\eta/2}\beta|_{L^2}\\
		\leq &\frac{C}{\alpha^{1/2}}|G_1|_{\mathcal{H}^{k}}|\xi|_{\mathcal{W}_1^k}|\phi|_{\mathcal{W}_3^k}\\
		\leq &\frac{C}{\alpha^{3/2}}|\varepsilon|_{\mathcal{H}^{k}}|\xi|_{\mathcal{W}_1^k}|\phi|_{\mathcal{W}_3^k},
	\end{align*}
	we obtain
	\begin{align}\label{G_1est}
		&\int D^{i}(\sin^2\beta) G_1)D^{k-i-1}\partial_\beta\xi\cdot D^{k-1}\partial_\beta\phi\cdot\bar\rho^2\cos^{2-\eta}\beta d\beta d\bar{\rho}\leq \frac{C}{\alpha^{3/2}}|\varepsilon|_{\mathcal{H}^{k}}|\xi|_{\mathcal{W}_1^k}|\phi|_{\mathcal{W}_3^k}.
	\end{align}
	For all $0\leq i\leq k-1$.
	Next, consider integral containing $G_2$ in~$(\ref{G_1G_2})$,
	\begin{equation}\label{G_2}
		\begin{aligned}
			&\int D^{i}G_2 D^{k-i-1}\partial_\beta\xi\cdot D^{k-1}\partial_\beta\phi\cdot\bar\rho^2\cos^{2-\eta}\beta d\beta d\bar{\rho}\\
			=&\int D^{i}(\frac{G_2}{\cos\beta}\cos\beta)D^{k-i-1}\partial_\beta\xi\cdot D^{k-1}\partial_\beta\phi\cdot\bar\rho^2\cos^{2-\eta}\beta d\beta d\bar{\rho}\\
			=&\int D^{i}(\frac{G_2}{\cos\beta})D^{k-i-1}\partial_\beta\xi\bar\rho\cos^{2-\eta/2}\cdot D^{k-1}\partial_\beta\phi\cdot\bar\rho\cos^{1-\eta/2}\beta d\beta d\bar{\rho}\\
			=&\int D^{i}(\frac{G_2}{\cos\beta})D^{k-i-1}(\cos\beta\partial_\beta\xi)\bar\rho\cos^{1-\eta/2}\cdot D^{k-1}\partial_\beta\phi\cdot\bar\rho\cos^{1-\eta/2}\beta d\beta d\bar{\rho}.\\
		\end{aligned}
	\end{equation}

	By compatibility condition $(\ref{exchangeorder})$
	\begin{equation}\label{inq}
		|\cos\beta\partial_\beta\xi|\leq   |\sin\beta D_{\bar\rho}\xi|+|\cos\beta D_{\bar\rho}\phi|+|\sin\beta\partial_\beta\phi|.
	\end{equation}
	Combined with~$(\ref{G_2})$
	\begin{align*}
		&\int D^{i}(\frac{G_2}{\cos\beta})D^{k-i-1}(\cos\beta\partial_\beta\xi)\bar\rho\cos^{1-\eta/2}\cdot D^{k-1}\partial_\beta\phi\cdot\bar\rho\cos^{1-\eta/2}\beta d\beta d\bar{\rho}\\
		\leq& \int |D^{i}(\frac{G_2}{\cos\beta})|\Big(|D^{k-i-1}(\sin\beta D_{\bar\rho}\xi)|+|D^{k-i-1}(\cos\beta D_{\bar\rho}\phi)|+\\
		&+|D^{k-i-1}(\sin\beta\partial_\beta\phi)|\Big)\bar\rho\cos^{1-\eta/2}\beta\cdot |D^{k-1}\partial_\beta\phi\cdot\bar\rho\cos^{1-\eta/2}\beta| d\beta d\bar{\rho}\\
		\leq& \int |D^{i}(\frac{G_2}{\cos\beta})|\Big(|D^{k-i-1} D_{\bar\rho}\xi\sin\beta\cos^{1-\eta/2}\beta|+|D^{k-i-1} D_{\bar\rho}\phi\cos^{2-\eta/2}\beta|+\\
		&+|D^{k-i-1}\partial_\beta\phi\sin\beta\cos^{1-\eta/2}\beta|\Big)\bar\rho\cdot |D^{k-1}\partial_\beta\phi\cdot\bar\rho\cos^{1-\eta/2}\beta| d\beta d\bar{\rho},
	\end{align*}
	we have
	\begin{align*}
		&|D^{k-i-1} D_{\bar\rho}\xi\sin\beta\cos^{1-\eta/2}\beta|\leq |D^{k-i-1} D_{\bar\rho}\xi\sin^{1-\eta/2}(2\beta)|\\
		&|D^{k-i-1} D_{\bar\rho}\phi\cos^{2-\eta/2}\beta|\leq |D^{k-i-1} D_{\bar\rho}\phi\cos^{1-\eta/2}\beta|\\
		&|D^{k-i-1}\partial_\beta\phi\sin\beta\cos^{1-\eta/2}\beta| \leq |D^{k-i-1}\partial_\beta\phi\cos^{1-\eta/2}\beta|.
	\end{align*}
	It implies
	\begin{equation}
		\begin{aligned}
			&\int D^{i}(\frac{G_2}{\cos\beta})D^{k-i-1}(\cos\beta\partial_\beta\xi)\bar\rho\cos^{1-\eta/2}\cdot D^{k-1}\partial_\beta\phi\cdot\bar\rho\cos^{1-\eta/2}\beta d\beta d\bar{\rho}\\
			\leq & \frac{C}{\alpha^{3/2}}|\varepsilon|_{\mathcal{H}^{k}}|\xi|_{\mathcal{W}_1^k}|\phi|_{\mathcal{W}_3^k}.
		\end{aligned}
	\end{equation}
	Combined with $(\ref{G_1est})$, we obtain
	\begin{equation}\label{Lambda3est1}
		\int D^{i}\Lambda_3(\Phi_\varepsilon)D^{k-i-1}\partial_\beta\xi\cdot D^{k-1}\partial_\beta\phi\cdot\bar\rho^2\cos^{2-\eta}\beta d\beta d\bar{\rho}\leq  \frac{C}{\alpha^{3/2}}|\varepsilon|_{\mathcal{H}^{k}}|\xi|_{\mathcal{W}_1^k}|\phi|_{\mathcal{W}_3^k}.
	\end{equation}
	For the second term of~$(\ref{lambda xi})$, we have
	\begin{equation*}
		\begin{aligned}
			&\int D^{i}\partial_\beta\Lambda_3(\Phi_\varepsilon)D^{k-i-1}\xi\cdot D^{k-1}\partial_\beta\phi\cdot\bar\rho^2\cos^{2-\eta}\beta d\beta d\bar{\rho}\\
			=&\int\!D^{i}\!D_\beta\!(\sin^2\!\beta G_1\!+\!G_2)\sin^{-\gamma/2}\!(2\beta) D^{k-i-1}\xi\sin^{\gamma/2-1}\!(2\beta)\!\cdot\!D^{k-1}\partial_\beta\phi\!\cdot\!\bar\rho^2\!\cos^{2-\eta}\!\beta d\beta d\bar{\rho}\\
			:=&J_1+J_2.
		\end{aligned}
	\end{equation*}
	We rewrite $J_1,J_2$ as following
	\begin{small}
		\begin{equation*}
			\begin{aligned}
				J_1=&\int D^{i}D_\beta(\sin^2\beta) G_1)\sin^{-\gamma/2}(2\beta) \cdot D^{k-i-1}\xi\sin^{\gamma/2-1}(2\beta)\cdot D^{k-1}\partial_\beta\phi\cdot\bar\rho^2\cos^{2-\eta}\beta d\beta d\bar{\rho}\\
				=&\int D^{i}D_\beta G_1\sin^{-\gamma/2}(2\beta) \cdot(D^{k-i-1}\xi\bar\rho\sin^{\gamma/2-1}(2\beta)\sin^2\beta)\cos^{1-\eta/2}\beta)\cdot D^{k-1}\partial_\beta\phi\cdot\bar\rho\cos^{1-\eta/2}\beta d\beta d\bar{\rho},\\
				&\\
				J_2=&\int D^{i}D_\beta(\frac{ G_2}{\cos\beta}\cos\beta)\sin^{-\gamma/2}(2\beta) \cdot D^{k-i-1}\xi\sin^{\gamma/2-1}(2\beta)\cdot D^{k-1}\partial_\beta\phi\cdot\bar\rho^2\cos^{2-\eta}\beta d\beta d\bar{\rho}\\
				=&\int D^{i}D_\beta(\frac{ G_2}{\cos\beta})\sin^{-\gamma/2}(2\beta) \cdot D^{k-i-1}\xi\bar\rho\cos^{2-\eta/2}\beta\sin^{\gamma/2-1}(2\beta)\cdot D^{k-1}\partial_\beta\phi\cdot\bar\rho^2\cos^{1-\eta/2}\beta d\beta d\bar{\rho}.
			\end{aligned}
		\end{equation*}
	\end{small}
	The difficulty is the order of $\beta$, by Lemma $\ref{lemma_inq}$, we have
	\begin{equation*}
		\begin{aligned} &|D^{k-i-1}\xi\sin^{\gamma/2-1}(2\beta)\sin^2\beta)\cos^{1-\eta/2}\beta|_{L^\infty_\beta}^2=|D^{k-i-1}\xi\sin^{\gamma/2+1}\beta\cos^{(\gamma-\eta)/2}\beta|_{L^\infty_\beta}^2\\
			\leq&C\int (D_\beta D^{k-i-1}\xi)^2\sin^{\gamma+1}\beta\cos^{\gamma-\eta-1}\beta d\beta\\
			\leq& C\int (D_\beta D^{k-i-1}\xi)^2\sin^{-\eta}\beta\cos^{-\eta}\beta d\beta\\		&|D^{k-i-1}\xi\cos^{2-\eta/2}\beta\sin^{\gamma/2-1}(2\beta)|_{L^\infty_\beta}^2=|D^{k-i-1}\xi\cos^{\gamma/2-\eta/2+1}\beta\sin^{\gamma/2-1}\beta|_{L^\infty_\beta}^2\\
			\leq &C\int (D_\beta D^{k-i-1}\xi)^2\sin^{\gamma-3}\beta\cos^{\gamma-\eta+1}\beta d\beta.
		\end{aligned}
	\end{equation*}
	Where the first term controlled by~$\mathcal{W}^k_1$ norm, as for second term, by compatibility condition~$(\ref{exchangeorder})$, we have
	\begin{small}
		\begin{align*}
			&\int  (D^{k-i-1}(D_\beta\xi))^2\sin^{\gamma-3}\beta\cos^{\gamma-\eta+1}\beta d\beta\\
			\leq&\int (D^{k-i-1}(\sin\beta(|\sin\beta D_{\bar{\rho}}\xi|+|\cos\beta D_{\bar\rho}\phi|+|\sin\beta \partial_\beta\phi|)))^2\sin^{\gamma-3}\beta\cos^{\gamma-\eta+1}\beta d\beta\\
			\leq &\int (D^{k-i-1}D_{\bar{\rho}}\xi)^2\sin^{\gamma+1}\beta\cos^{\gamma-\eta+1}\beta d\beta+\int (D^{k-i-1} D_{\bar{\rho}}\phi)^2\sin^{\gamma-1}\beta\cos^{\gamma-\eta+3}\beta d\beta+\\
			+&\int (D^{k-i-1} \partial_\beta\phi)^2\sin^{\gamma+1}\beta\cos^{\gamma-\eta+1}\beta d\beta\\
			\leq& \int (D^{k-i-1}D_{\bar{\rho}}\xi)^2\sin^{2-\eta}(2\beta) d\beta+\int (D^{k-i-1}D_{\bar{\rho}}\phi)^2\cos^{2-\eta}\beta d\beta+\int (D^{k-i-1}\partial_\beta\phi)^2\cos^{2-\eta}\beta d\beta.\\
		\end{align*}
	\end{small}	
	For $i\leq k-2$, we have
	\begin{small}
		\begin{equation*}
			\begin{aligned}
				J_1\!=\!&\int D^{i}\!D_\beta G_1\!\sin^{-\gamma/2}(2\beta)\!\cdot\!(D^{k\!-\!i\!-\!1}\xi\bar\rho\sin^{\gamma/2\!-\!1}\!(2\beta)\sin^2\!\beta\cos^{1\!-\!\eta/2}\!\beta)\!\cdot\! D^{k\!-\!1}\partial_\beta\phi\cdot\bar\rho\cos^{1\!-\!\eta/2}\!\beta d\beta d\bar{\rho}\\
				\leq&|D^{i}D_\beta G_1\sin^{-\gamma/2}(2\beta)|_{L_{\bar\rho}^\infty L^2_{\beta}}\cdot|(D^{k-i-1}\xi\bar\rho\sin^{\gamma/2-1}(2\beta)\sin^2\beta)\cos^{1-\eta/2}\beta)|_{L_{\bar\rho}^2 L^\infty_{\beta}}|\phi|_{\mathcal{W}^k_3}\\
				\leq& \frac{C}{\alpha^{1/2}} |G_1|_{\mathcal{H}^{i+2}}|\xi|_{\mathcal{W}^{k-i}}|\phi|_{\mathcal{W}^k_3}\\
				\leq&\frac{C}{\alpha^{3/2}}|\varepsilon|_{\mathcal{H}^{k}}|\xi|_{\mathcal{W}^{k}}|\phi|_{\mathcal{W}^k_3},
			\end{aligned}
		\end{equation*}
	\end{small}
	\begin{small}
		\begin{equation*}
			\begin{aligned}
				J_2\!=\!&\int\!D^{i}\!D_\beta(\frac{ G_2}{\cos\beta}\!\cos\beta)\!\sin^{-\!\gamma\!/\!2}(2\beta)\!\cdot\!(D^{k\!-\!i\!-\!1}\!\xi\bar\rho\cos^{1\!-\!\eta/2}\!\beta\sin^{\gamma/2\!-\!1}\!(2\beta))\!\cdot\! D^{k\!-\!1}\partial_\beta\phi\!\cdot\!\bar\rho^2\!\cos^{1\!-\!\eta/2}\!\beta d\beta d\bar{\rho}\\
				\leq&|D^{i}D_\beta(\frac{ G_2}{\cos\beta})\sin^{-\gamma/2}(2\beta)|_{L_{\bar\rho}^\infty L^2_{\beta}} \cdot |D^{k-i-1}\xi\bar\rho\cos^{2-\eta/2}\beta\sin^{\gamma/2-1}(2\beta)|_{L_{\bar\rho}^2 L^\infty_{\beta}}\cdot |\phi|_{\mathcal{W}^k_3}\\
				\leq& \frac{C}{\alpha^{1/2}}|\frac{ G_2}{\cos\beta}|_{\mathcal{H}^{i+2}}(|\xi|_{\mathcal{W}_1^{k-i}}+|\phi|_{\mathcal{W}_3^{k-i}})\cdot |\phi|_{\mathcal{W}^k_3}\\
				\leq& \frac{C}{\alpha^{3/2}}|\varepsilon|_{\mathcal{H}^{k}}(|\xi|_{\mathcal{W}_1^{k}}+|\phi|_{\mathcal{W}_3^{k}})\cdot |\phi|_{\mathcal{W}^k_3}.
			\end{aligned}
		\end{equation*}
	\end{small}
	For $i=k-1$, we have
	\begin{equation*}
		J_2\leq\frac{C}{\alpha^{3/2}}|\varepsilon|_{\mathcal{H}^{k}}(|\xi|_{\mathcal{W}_1^{k}}+|\phi|_{\mathcal{W}_3^{k}})\cdot |\phi|_{\mathcal{W}^k_3}
	\end{equation*}
	Combined with $(\ref{Lambda3est1})$, we have
	\begin{equation*}
		\int D^{k-1}\partial_\beta(\Lambda_3(\Phi_\varepsilon)\xi)\cdot D^{k-1}\partial_\beta\phi\cdot\bar\rho^2\cos^{2-\eta}\beta d\beta d\bar{\rho}\leq \frac{C}{\alpha^{3/2}}|\varepsilon|_{\mathcal{H}^{k}}(|\xi|_{\mathcal{W}_1^{k}}+|\phi|_{\mathcal{W}_3^{k}})\cdot |\phi|_{\mathcal{W}^k_3}.
	\end{equation*}
	It implies
	\begin{equation}\label{lambda3est2}
		\left<\Lambda_3(\Phi_\varepsilon)\xi,\phi\right>_{\mathcal{W}^k_3}\leq \frac{C}{\alpha}|\varepsilon|_{\mathcal{H}^{k}}(|\xi|_{\mathcal{W}_1^{k}}+|\phi|_{\mathcal{W}_3^{k}})\cdot |\phi|_{\mathcal{W}^k_3}.
	\end{equation}
	
\text{ Step 4:\quad Estimate of $I_4$.}
	
	For~$\Lambda_4(\Phi_\varepsilon)$, we consider
	\begin{equation}\label{lambda4}
		\begin{aligned}
			&\int D^{k-1}\partial_\beta(\Lambda_4(\Phi_\varepsilon)\phi)\cdot D^{k-1}\partial_\beta\phi\cdot\bar\rho^2\cos^{2-\eta}\beta d\beta d\bar{\rho}\\
			=&\sum_{i=0}^{k-1}c_i\int D^{i}\Lambda_4(\Phi_\varepsilon)D^{k-i-1}\partial_\beta\phi\cdot D^{k-1}\partial_\beta\phi\cdot\bar\rho^2\cos^{2-\eta}\beta d\beta d\bar{\rho}+\\
			&+\sum_{i=0}^{k-1}c_i\int D^{i}\partial_\beta\Lambda_4(\Phi_\varepsilon)D^{k-i-1}\phi\cdot D^{k-1}\partial_\beta\phi\cdot\bar\rho^2\cos^{2-\eta}\beta d\beta d\bar{\rho}.
		\end{aligned}	
	\end{equation}
	The first term controlled by $\mathcal{W}_3^k$ norm, the difference is the second term
	\begin{align*}
		\partial_\beta\Lambda_4(\Phi_\varepsilon)=&-\sin\beta\cos\beta D_{\bar\rho}^2\partial_\beta\Phi_\varepsilon-(\cos^2\beta)-\sin^2\beta))D_{\bar{\rho}}\partial_{\beta\beta}\Phi_\varepsilon+\\
		&+\sin\beta\cos\beta\partial_{\beta\beta\beta}\Phi_\varepsilon-D_{\bar{\rho}}\partial_\beta(\tan\beta\Phi_\varepsilon)+l.o.t\\
		=&D_{\bar\rho}\left(-\sin\beta\cos\beta D_{\bar\rho}\partial_\beta\Phi_\varepsilon-(\cos^2\beta)-\sin^2\beta))\partial_{\beta\beta}\Phi_\varepsilon -\partial_\beta(\tan\beta\Phi_\varepsilon)\right)+\\
		&+\frac{1}{2}D_\beta\partial_{\beta\beta}\Phi_\varepsilon+l.o.t.
	\end{align*}
	By~$(\ref{thm_biot})$
	\begin{equation*}
		|\partial_\beta\Lambda_4(\Phi_\varepsilon)|_{\mathcal{H}^{k-1}}\leq \frac{C}{\alpha}|\varepsilon|_{\mathcal{H}^k},
	\end{equation*}
	it implies
	\begin{equation*}
		\left<\Lambda_4(\Phi_\varepsilon)\phi,\phi\right>_{\mathcal{W}_3^k}\leq  \frac{C}{\alpha^{3/2}}|\varepsilon|_{\mathcal{H}^{k}}|\phi|_{\mathcal{W}_3^{k}}^2.
	\end{equation*}
	Combining $(\ref{lambda3est2})$ with~$(\ref{phiestV})$, we have
	\begin{equation}
		\left<\mathcal M^{\varepsilon}_\phi(\xi,\phi),\phi\right>_{\mathcal{W}^k_3}\leq\frac{C}{\alpha^{3/2}}|\varepsilon|_{\mathcal {H}^k}|\phi|{\mathcal{W}^k_1}(|\xi|_{\mathcal{W}^k_1}+|\phi|_{\mathcal{W}^k_3}).
	\end{equation}
	We obtain all estimates in Proposition $\ref{Mest2}$.
\end{proof}

\section{Proof of main theorem}
Putting together what we gained from the preceding sections, we give the proof of Theorem~\ref{thm2} in this section.

Recall the definition of $X(s),Y(s)$ in~$(\ref{XY})$
\begin{equation}
	\begin{aligned}
		X(s)=&|\xi|_{\mathcal{W}^k_1}^2+|\xi|_{\mathcal{W}^k_2}^2+|\phi|_{\mathcal{W}^k_3}^2,\\
		Y(s)=&|\frac{1}{\rho}\partial_\beta\xi|_{\mathcal{W}^{k}_1}^{2}+|\frac{1}{\rho}D_\rho\xi|_{\mathcal{W}^{k}_1}^{2}+
		|\frac{1}{\rho}\partial_\beta\xi|_{\mathcal{W}^{k}_2}^{2}+|\frac{1}{\rho}D_\rho\xi|_{\mathcal{W}^{k}_2}^{2}+
		|\frac{1}{\rho}\partial_\beta\phi|_{\mathcal{W}^{k}_3}^{2}+|\frac{1}{\rho}D_\rho\phi|_{\mathcal{W}^{k}_3}^{2}.
	\end{aligned}
\end{equation}
We need obtain the energy estimate of~$ |\varepsilon|_{\mathcal {H}^k}$ and $X(s)$. First combine three estimates in~$(\ref{energy})$ we obtain
\begin{equation}\label{energyfinal}
	\begin{aligned}
		&\frac {d}{ds}X(s)\leq -\frac{\mu_s}{\mu}A_1-\big(\frac{\lambda_s}{\lambda}+1\big)((1+\delta)A_1+2A_2)-(\big(2+\frac{l'_{1}}{l_1}\big)A_2+\big(1+\delta+\frac{l'_{2}}{l_2}\big)A_1)\\
		&-\left<\mathcal{M}_\xi(\xi,\phi),\xi\right>_{\mathcal{W}_1^k}-\left<\mathcal{M}_\xi(\xi,\phi),\xi\right>_{\mathcal{W}_2^k}-\left<\mathcal{M}_\phi(\xi,\phi),\phi\right>_{\mathcal{W}_3^k}-C Y(s),
	\end{aligned}
\end{equation}

where
\begin{equation*}
	\begin{aligned} A_1=&\left<\bar{y}\partial_{\bar{y}}\xi,\xi\right>_{\mathcal{W}^k_1}+\left<\bar{y}\partial_{\bar{y}}\xi,\xi\right>_{\mathcal{W}_2^k}+\left<\bar{y}\partial_{\bar{y}}\phi,\phi\right>_{\mathcal{W}_3^k},\\
		A_2=&\left<\xi,\xi\right>_{\mathcal{W}_1^k}+\left<\xi,\xi\right>_{\mathcal{W}_2^k}+\left<\phi,\phi\right>_{\mathcal{W}_3^k}.\\
	\end{aligned}
\end{equation*}
By Proposition~$\ref{ode}$, we obtain$$\big|\frac{\mu_s}{\mu}\big|+\big|\frac{\lambda_s}{\lambda}+1\big|\leq \frac{C}{\alpha}|\varepsilon|_{\mathcal{H}^k}+CY(s),$$
Corollary~$\ref{l1l2cor}$ gives the estimate of time derivative part: $$-\frac{\mu_s}{\mu}A_1-\big(\frac{\lambda_s}{\lambda}+1\big)((1+\delta)A_1+2A_2)-(\big(2+\frac{l'_{1}}{l_1}\big)A_2+\big(1+\delta+\frac{l'_{2}}{l_2}\big)A_1)\leq-cX(s)+CY(s)X(s),$$ Proposition $\ref{Mest1},\ref{Mest2}$ give the estimate of $\mathcal{M}(\xi,\phi)$.
Thus we get
\begin{equation}\label{final1}
	\frac {d}{ds}X(s)\leq -c X(s)+\frac{C}{\alpha^{3/2}}|\varepsilon(s)|_{\mathcal{H}^k}X(s)+CY(s)X(s)-C_1Y(s).
\end{equation}
Combine the energy estimate of $\varepsilon$~$(\ref{varepsilonenergy})$ with the estimate of $\xi$ $(\ref{xinorm})$ together, we obtain
\begin{equation}\label{final2}
	\frac {d}{ds}|\varepsilon(s)|_{\mathcal{H}^k}^2\leq -c|\varepsilon(s)|_{\mathcal{H}^k}^2+\frac{C}{\alpha^{3/2}}|\varepsilon(s)|_{\mathcal{H}^k}^3+CY(s)|\varepsilon(s)|_{\mathcal{H}^k}^2+C_2\alpha^{-2k+1} Y(s).
\end{equation}

Putting~$(\ref{final1})$ and~$(\ref{final2})$ together, we can eliminate the positive term of $Y(s)$ in $(\ref{final2})$. Thus setting constant $C_3=\frac{C_2+1}{C_1}>0$ and
$$\mathcal{E}(s)=|\varepsilon(s)|_{\mathcal{H}^k}^2+C_3\alpha^{-2k+1}X(s),$$ 
we get
\begin{equation}\label{final}
	\begin{aligned}
		\frac {d}{ds}\mathcal{E}(s)\leq &-c\mathcal{E}(s)+\frac{C}{\alpha^{3/2}}\mathcal{E}(s)^{3/2}+CY(s)\mathcal{E}(s)+(C_2-C_1 C_3)\alpha^{-2k+1}Y(s)\\
		=&-c\mathcal{E}(s)+\frac{C}{\alpha^{3/2}}\mathcal{E}(s)^{3/2}+(C\mathcal{E}(s)-\alpha^{-2k+1})Y(s).
	\end{aligned}
\end{equation}
We take $\mathcal{E}(0)\leq \delta_0\alpha^3$, by~Gronwall inequality, we get~$(\ref{finalest})$. For~$Y(s)$, we integrate above inequality, and have
$$\int_0^\infty Y(s)ds\leq \alpha^{2k-1}\mathcal{E}(0).$$
Combine with the estimate of $|\mu_s|+|\frac{\lambda_s}{\lambda}+1|$~$(\ref{lambda_mu})$, we obtain~$(\ref{finalest2})$, this ends the proof of Theorem~$\ref{thm2}$.

\noindent \textbf{Acknowledgments:} L. Zhang is partially supported by NSFC under grant
12031012 and 11631008.

\end{document}